\newcommand{\ds}{\displaystyle}
\newcommand{\N}{\mathbb{N}}
\newcommand{\R}{\mathbb{R}}
\newcommand{\K}{\mathscr{K}}
\newcommand{\HK}{\mathscr{H}}
\newcommand{\eps}{\epsilon}
\newcommand{\gref}{\gamma_{\mathrm{ref}}}
\newtheorem{thm}{Theorem}[section]
\newtheorem{prop}[thm]{Proposition}
\newtheorem{defn}[thm]{Definition}
\newtheorem{lemma}[thm]{Lemma}
\newtheorem{corol}[thm]{Corollary}
\newtheoremstyle{obs}
  {3pt}
  {3pt}
  {}
  {}
  {\bfseries}
  {.}
  {.5em}
  {}
\theoremstyle{obs}
\newtheorem{remark}[thm]{Remark}
\def\qed{\ifvmode\removelastskip\fi
{\unskip\nobreak\hfil\penalty50\hbox{}\nobreak\hfil \hbox{\vrule
height1.2ex width1.2ex}\parfillskip=0pt \finalhyphendemerits=0
\par \smallskip}}
\def\tabaddress#1{{\small\it\begin{tabular}[t]{c}#1
\\[1.2ex]\end{tabular}}}
\title{High order sufficient conditions for tracking}
\author{{\sc M. Barbero-Li\~n\'an}
\thanks{{\bf e}-{\it mail}: mbarbero@mast.queensu.ca} \ \thanks{At present, Department of Mathematics \&
Statistics, Queen's University, Kingston (ON), Canada.} \ , {\sc M.
Sigalotti\thanks{{\bf e}-{\it mail}: mario.sigalotti@inria.fr}}
\\
 \tabaddress{INRIA (Projet CORIDA) and Institut \'Elie Cartan de
Nancy, \\ Universit\'e Nancy 1, BP 239, Vand{\oe}uvre-l\`es-Nancy,
54506 France.} }
\begin{document}

\maketitle

\begin{abstract}
In this paper, we study under which conditions the trajectories of a mechanical
control system
 can track any curve on the configuration
manifold. We focus on systems that can be represented as forced
affine connection control systems and we generalize the sufficient
conditions for tracking known in the literature. The sufficient
conditions are expressed in terms of convex cones of vector fields
defined through  particular brackets of the control vector fields of
the system. The tracking control laws obtained by our constructions
depend on several parameters. By imposing suitable asymptotic
conditions on such parameters, we construct algorithmically
one-parameter tracking control laws. The theory is supported by
examples of control systems associated with elliptic hovercrafts and
ellipsoidal submarines.
\end{abstract}

%
%
%


\section{Introduction}

The tracking problem has gained an increasing interest, mainly
because of its applications to robot manipulators as for instance to
control the position of underwater vehicles \cite{Mario} and
hovercrafts \cite{Hovercraft}. The tracking problem appears when a
particular trajectory has to be followed by a control system, but
there is no control law for the control system that makes this
trajectory admissible. Then, the best that can be expected is to
find a control law, typically oscillatory, that defines a good
enough approximation of the target trajectory.

The mathematical background in the tracking problem includes the
averaging theory \cite{BookAveraging} as explained, for instance, in
\cite{2005BulloAndrewBook}. The averaging techniques transform
differential equations difficult to solve into other differential
equations whose solutions approximate fairly well the solutions to
the first set of equations. This is useful to approximate solutions
to differential equations that depend on time or on parameters.

Differential geometry has provided a suitable framework to study in
an intrinsic way typical mechanical control systems in engineering
as, for instance, underwater submarines, aircraft models,
hovercrafts and so on \cite{2005BulloAndrewBook}. Here, we focus on
forced affine connection control systems and generalize the
sufficient conditions for tracking a trajectory that exist in the
literature from a geometric viewpoint \cite{2005BulloAndrewBook}.
The existent results can be interpreted as first\textendash{}order
sufficient conditions because only the control vector fields and
particular brackets, called symmetric products, between them get
involved in the statement of the sufficient conditions. However, our
conditions need longer symmetric products and so they are said to be
of order higher than two.

In \cite{Mario} it was observed that the tracking is possible for
specific underwater vehicles, even though they do not satisfy the
geometric sufficient conditions known in the literature. That motivates
our research in order to obtain more general geometric sufficient
conditions that ensure the tracking property for a wider range of
control systems.

 The chances to be able to track a target trajectory
are related to some controllability requirement and to the avoidance of
``bad" directions. In an informal way, these ``bad" directions have
to be interpreted as directions that will not make possible to have
the starting point in the interior of the reachable set.
(We refer to \cite{2005BulloAndrewBook} for an accurate description
of the obstructions to controllability in terms of the symmetric
products and of particular vector\textendash{}valued quadratic
forms.) The sufficient conditions for being able to track unfeasible
trajectories are also related to how nonholonomicity allows to
enlarge the set of admissible velocities for the control system. In
this regard, constructions of convex cones \cite{Bressan}, the
above\textendash{}mentioned vector\textendash{}valued quadratic
forms \cite{2005BulloAndrewBook} and some techniques similar to the
ones in \cite{Jorge} have been useful for obtaining the
constructions considered here.

The paper is organized as follows. Section \ref{S2Notation} contains
the necessary background in forced affine connection control systems
and in chronological calculus
\cite{AgrachevBook}. Section \ref{STrack} defines properly the
notion of trackability and reviews the geometric sufficient
conditions in the literature \cite{2005BulloAndrewBook}. Sections
\ref{SgeneralTrack} and \ref{Sparameter} contain the main
contributions of the paper and some examples to justify the utility
of these results.

\section{Notation and preliminaries}\label{S2Notation}

Denote by $\mathbb{N}$ the set of {\it positive} natural numbers
and write  $\N_0$ for $\N\cup\{0\}$.
Fix $n\in\mathbb{N}$.
From now on, $Q$ is a $n$\textendash{}dimensional smooth manifold
and $\mathfrak{X}(Q)$ denotes the set of smooth vector fields on
$Q$. All the vector fields are considered smooth, unless otherwise
stated. Let $\tau_Q\colon TQ \rightarrow Q$ be the canonical tangent
projection, a \textit{vector field $X$ on $Q$ defined along
$\tau_Q$} is a mapping $X\colon TQ \rightarrow TQ$ such that
$\tau_Q\circ X= \tau_Q$.

\subsection{Affine connection control systems}

The trajectories $\gamma:I\to Q$ of a Lagrangian mechanical systems
on a manifold $Q$ are minimizers of the action functional
\[A_L(\gamma)=\int_I L(t,\dot{\gamma}(t)){\rm d}t\]
associated with a Lagrangian function $L\colon \mathbb{R} \times TQ\rightarrow \mathbb{R}$.

 The solutions to this variational problem 
must 
satisfy
the well\textendash{}known Euler\textendash{}Lagrange
equations,
\begin{equation}\label{euler-lagrange}
\frac{{\rm d}}{{\rm d}t}\left( \frac{\partial L}{\partial
v^i}\right)-\frac{\partial L}{\partial q^i}=0,\quad i=1,\dots,n,
\end{equation}
where $(q^i,v^i)$ are local coordinates for $TQ$. Here we consider
controlled Euler\textendash{}Lagrange equations obtained by
modifying the right\textendash{}hand side on the above equation, as
follows:
\[\frac{{\rm d}}{{\rm d}t}\left( \frac{\partial L}{\partial
v^i}\right)-\frac{\partial L}{\partial
q^i}=\sum_{a=1}^ku_aY_a^i,\quad i=1,\dots,n,\] with $u_a\colon I
\rightarrow \mathbb{R}$, $Y_a^i\colon Q \rightarrow \mathbb{R}$.

 When the manifold $Q$ is endowed with the Riemannian structure given by a Riemannian metric
$g$ and the Lagrangian function $L_g(v_q)=\frac{1}{2}g(v_q,v_q)$ is
considered, the solutions to (\ref{euler-lagrange}) turn out to be the
geodesics of the Levi\textendash{}Civita affine connection
$\nabla^g$ associated with the Riemannian metric. (See
\cite{2005BulloAndrewBook} for more details and
for many examples of mechanical control systems that fit in this
description.)

When control forces are added to the geodesic equations we obtain an
affine connection control system
\[\nabla^g_{\dot{\gamma}(t)}\dot{\gamma}(t)=\sum_{a=1}^ku_a(t)Y_a(\gamma(t)),\]
with $Y_a$ being vector fields on $Q$.

The notion of affine connection control system can be extended
without the need of the Levi\textendash{}Civita connection.

\begin{defn} An \textbf{affine connection}
is a mapping
\[\begin{array}{rcl}
\nabla\colon\mathfrak{X}(Q)\times \mathfrak{X}(Q)&
\longrightarrow & \mathfrak{X}(Q)\\
(X,Y) & \longmapsto & \nabla(X,Y)=\nabla_XY,
\end{array}\] satisfying the following
properties:
\begin{enumerate}
\item $\nabla$ is $\mathbb{R}$\textendash{}linear on $X$ and on $Y$;
\item $\nabla_{fX}Y=f\nabla_XY$ for every $f\in {\mathcal C}^{\infty}(Q)$;
\item $\nabla_XfY=f\nabla_XY+\left(Xf \right)Y$, for every $f\in {\mathcal C}^{\infty}(Q)$. (Here $Xf$ denotes the derivative
of $f$ in the direction $X$.)
\end{enumerate}
\end{defn}

 The mapping $\nabla_XY$ is called the \textit{covariant derivative of $Y$
 with respect to $X$}. Given local coordinates $(q^i)$ on $Q$, the
 \textit{Christoffel symbols for the affine connection} in these
 coordinates are given by
 \[\ds{\nabla_{\frac{\partial}{\partial q^j}}\frac{\partial}{\partial q^r}=\sum_{i=1}^n
 \Gamma^i_{jr}\frac{\partial}{\partial q^i}.}\]
 From the properties of the affine connection, we have
 \[\ds{\nabla_XY=\sum_{i,j,r=1}^n\left(X^j\frac{\partial Y^i}{\partial q^j}+\Gamma^i_{jr}X^jY^r
 \right)\frac{\partial}{\partial q^i}},\]
 where $X=\sum_{i=1}^nX^i\partial/\partial q^i$ and $Y=\sum_{i=1}^nY^i\partial/\partial
 q^i$.

\begin{defn}\label{Def-FACCS} A \textbf{forced affine connection control system
(FACCS)} is a control mechanical system given by $\Sigma=(Q,\nabla,
Y, \mathscr{Y},
 U)$ where
\begin{itemize}
\item $Q$ is
a smooth $n$\textendash{}dimensional manifold called the
\textit{configuration manifold},
\item $Y$ is a time-dependent vector field along the projection $\tau_Q\colon TQ\rightarrow Q$, measurable and bounded with respect to
the time and affine with respect to the velocities,
\item $\mathscr{Y}$ is a set of $k$
control vector fields on $Q$, and \item $U\subseteq \mathbb{R}^k$.
\end{itemize}
A trajectory $\gamma\colon I\subset \mathbb{R} \rightarrow Q$ is
\textbf{admissible for $\mathbf{\Sigma}$} if $\dot \gamma\colon I
\rightarrow TQ$ is absolutely continuous and there exists a
measurable and bounded control $u\colon I \rightarrow U$ such that
the dynamical equations of the control system~$\Sigma$
\begin{equation}\label{nabla}\nabla_{\dot{\gamma}(t)}\dot{\gamma}(t)=Y(t,\dot{\gamma}(t))+\sum_{a=1}^ku_a(t)Y_a(\gamma(t)),
\end{equation}
are fulfilled (almost everywhere).
\end{defn}

The vector field $Y$ 
includes all the non-controlled external forces; e.g., the potential
and the non\textendash{}potential forces. The assumption that $Y$
is affine with respect to the velocities means that, in every
local system of coordinates $(q^i,v^i)$ on $TQ$, $Y$ can be written
as
$$Y(t,v_q)=Y_0(t,q)+\sum_{i=1}^n v^i Y^i(t,q).$$

Equation $(\ref{nabla})$ can be rewritten as a
first\textendash{}order control\textendash{}affine system on $TQ$,
\begin{equation}
\label{first}
\dot{\Upsilon}(t)=Z(\Upsilon(t))+Y^V(t,\Upsilon(t))+\sum_{a=1}^ku_a(t)Y^V_a(\Upsilon(t)),
\end{equation}
where $\Upsilon\colon I \rightarrow TQ$ is such that  $\tau_Q \circ
\Upsilon=\gamma$, $Z$ is the geodesic spray associated to the affine
connection on $Q$ and $Y^V_a$ denotes the vertical lift of the
vector field $Y_a$ (see \cite{AbrahamMarsden} for more details).

Apart from the usual Lie bracket that provides $\mathfrak{X}(Q)$
with a Lie algebra structure, the following product of elements in
$\mathfrak{X}(Q)$ associated to $\nabla$ can be introduced.
\begin{defn} The \textbf{symmetric product} is the map
\begin{eqnarray*}\label{sym} \langle \cdot \colon \cdot \rangle\colon
\mathfrak{X}(Q) \times  \mathfrak{X}(Q) & \longrightarrow &
\mathfrak{X}(Q) \\
(X,Y) & \longmapsto & \nabla_XY +\nabla_Y X.
\end{eqnarray*}
\end{defn}

It can be proved that
\begin{equation}\label{triple_bracket}
[Y_a^V,[Z,Y_b^V]]=\langle Y_a \colon Y_b
\rangle^V
\end{equation}
(see \cite{2005BulloAndrewBook}).

\subsection{Chronological calculus}

We recall in this section some notion of chronological calculus,
which
is used later as a tool in the study of the asymptotic behavior  of endpoint
mappings depending on parameters. For a  comprehensive discussion
and for the proofs of all results stated in this section see
\cite{AgrachevBook}. In the sequel all vector fields, autonomous and
non\textendash{}autonomous, are assumed to be complete. The behavior
of non\textendash{}complete vector fields on compact sets can be
studied by considering suitable cut\textendash{}off procedures.

Given a non\textendash{}autonomous vector field $X_\tau$ on some
manifold $M$, where $\tau$ denotes the time variable and the map
$(\tau,q)\mapsto X_\tau(q)$ is assumed to be smooth with respect to
$q$ and measurable bounded with respect to $\tau$, we denote by
$\overrightarrow{{\rm exp}}\int^t_0 X_{\tau}{\rm d}\tau$ the
diffeomorphism of $M$ onto itself corresponding to the flow from
time $0$ to time $t$ of $X_\tau$. Hence, $\overrightarrow{{\rm
exp}}\int^t_0 X_{\tau}{\rm d}\tau(\bar q)$ is the evaluation at time
$t$ of the solution to the non\textendash{}autonomous Cauchy problem
\[\dot q(\tau)=X_\tau(q(\tau)),\quad q(0)= \bar q.\]
Any diffeomorphism $P:M\rightarrow M$ defines an isomorphism ${\rm
Ad}P$ of $\mathfrak{X}(M)$ through the rule
$${\rm Ad}P(Y)(q)=P_*(Y(P^{-1}(q))),$$
where 
$P_*$ denotes the
pushforward by $P$.

If $P=\overrightarrow{{\rm exp}}\int^t_0 X_{\tau}{\rm d}\tau$, then
we write
$${\rm Ad}P=\overrightarrow{{\rm exp}}\int^t_0 {\rm ad}X_{\tau}{\rm d}\tau.$$
If ${\rm ad}^{(m)}X_\tau(Y)=0$ for some $m\in\mathbb{N}$ and every
$\tau$, then
\begin{small}
\begin{equation}\label{nilpotent_exponential}\begin{array}{l}
\overrightarrow{{\rm exp}}\int^t_0 {\rm ad}X_{\tau}{\rm d}\tau(Y)=Y+\int^t_0 [X_{\tau},Y]{\rm d}\tau\\
+\int^t_0 \int_{0}^{\tau_1}[X_{\tau_2},[X_{\tau_1},Y]{\rm d}\tau_2{\rm d}\tau_1+\cdots+\\
\int^t_0 \int_{0}^{\tau_1}\cdots \int_{0}^{\tau_{m-2}} {\rm
ad}X_{\tau_{m-1}}\circ\cdots \circ {\rm ad}X_{\tau_{1}}Y{\rm
d}\tau_{m-1}\cdots {\rm d}\tau_1.
\end{array}
\end{equation}
\end{small}
In the framework of chronological calculus the flow of the sum of
two non\textendash{}autonomous vector fields can be conveniently
represented by the following variation formula
\begin{equation}\begin{array}{c}
 \overrightarrow{{\rm
exp}}\int^t_0(X_{\tau}+Y_{\tau}){\rm d}\tau=
\\ \overrightarrow{{\rm exp}}\int^t_0 X_{\tau}{\rm
d}\tau\circ\overrightarrow{{\rm
exp}}\int^t_0\left(\overrightarrow{{\rm exp}}\int^\tau_0{\rm
ad}X_{s}{\rm d}s\right) Y_{\tau} {\rm d}\tau.
\label{variation_formula}
\end{array}
\end{equation}
Let us recall a useful result for the convergence of flows of
non\textendash{}autonomous vector fields. It states, roughly
speaking, that the flows converge if the vector fields converge in
integral sense. For further results and a discussion on this kind of
convergence from the point of view of ordinary differential
equations and control theory, see \cite{Biles,LiuSussmann}.
\begin{lemma}\cite[Lemma 8.10]{AgrachevBook} Let $Z^j_{\tau}$, $j\in\mathbb{N}$,
and $Z_{\tau}$, $\tau \in [0,t_1]$, be non\textendash{}autonomous
vector fields on $M$, bounded with respect to $\tau$, and let these vector
fields have a compact support. If
\[\int^t_0Z^j_{\tau}{\rm d} \tau \rightarrow \int^t_0Z_{\tau} {\rm
d}\tau, \quad j\rightarrow \infty,\] then
\[\overrightarrow{{\rm exp}}\int^t_0Z^j_{\tau}{\rm d} \tau \rightarrow \overrightarrow{{\rm exp}}\int^t_0Z_{\tau} {\rm
d}\tau, \quad j\rightarrow \infty,\] both convergences being uniform
with respect to $(t,q)\in [0,t_1]\times M$ and uniform with all derivatives
with respect to $q\in M$. \label{ConvIntegral}
\end{lemma}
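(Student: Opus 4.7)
The plan is to recast the hypothesis in terms of the antiderivative $W^j_\tau := \int_0^\tau (Z^j_s - Z_s)\,ds$, which tends to $0$ as $j\to\infty$ in the Fréchet topology of $\mathfrak{X}(M)$ (uniform convergence on compacta together with all derivatives), uniformly for $\tau\in[0,t_1]$. Denoting $P^j_t = \overrightarrow{\mathrm{exp}}\int_0^t Z^j_\tau\,d\tau$ and $P_t = \overrightarrow{\mathrm{exp}}\int_0^t Z_\tau\,d\tau$, the goal is the convergence $P^j_t\to P_t$ in the same topology.

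The first step would be to apply the variation formula (\ref{variation_formula}) with $X_\tau = Z_\tau$ and $Y_\tau = Z^j_\tau - Z_\tau$, which factors the perturbed flow as $P^j_t = P_t\circ Q^j_t$, where $Q^j_t = \overrightarrow{\mathrm{exp}}\int_0^t Y^j_\tau\,d\tau$ and $Y^j_\tau = \mathrm{Ad}(\overrightarrow{\mathrm{exp}}\int_0^\tau Z_s\,ds)(Z^j_\tau - Z_\tau)$. Since left composition with the fixed diffeomorphism $P_t$ is continuous in the Fréchet topology, it suffices to show $Q^j_t\to\mathrm{id}$. Setting $V^j_\tau := \int_0^\tau Y^j_s\,ds$ and using $Z^j_s - Z_s = \partial_s W^j_s$, an integration by parts in $s$ gives
\[
V^j_\tau = \mathrm{Ad}(P_\tau)(W^j_\tau) - \int_0^\tau \mathrm{Ad}(P_s)(\mathrm{ad}\,Z_s)(W^j_s)\,ds,
\]
so $V^j\to 0$ in Fréchet, uniformly in $\tau$.

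It remains to establish the implication: if $V^j_\tau\to 0$ in Fréchet uniformly in $\tau\in[0,t_1]$, then the corresponding flow $Q^j_t$ tends to the identity in the same sense. Starting from the integral equation $Q^j_t(\bar q)=\bar q+\int_0^t Y^j_\tau(Q^j_\tau(\bar q))\,d\tau$ and using $Y^j_\tau = \partial_\tau V^j_\tau$, an integration by parts in $\tau$ yields
\[
Q^j_t(\bar q) - \bar q = V^j_t(Q^j_t(\bar q)) - \int_0^t dV^j_\tau(Q^j_\tau(\bar q))\,Y^j_\tau(Q^j_\tau(\bar q))\,d\tau.
\]
Iterating the integration by parts on the remaining integral expresses $Q^j_t - \mathrm{id}$ as a series whose terms are controlled by powers of $\|V^j\|_{C^1}$, hence $Q^j_t\to\mathrm{id}$ locally uniformly. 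To upgrade to convergence together with all derivatives, I would differentiate the flow equation in $\bar q$: the Jacobians $D^k_{\bar q} Q^j_t$ satisfy linear variational equations whose time-integrated coefficients are controlled by derivatives of $V^j$, and induction on $k$ closes the argument.

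The main obstacle is that $Y^j_\tau$ need not be pointwise small — only its primitive $V^j_\tau$ is — so any bound resting on $\sup_\tau\|Y^j_\tau\|$ is unavailable. The integration-by-parts device is precisely what converts the integral smallness of $Y^j$ into pointwise smallness of the terms governing $Q^j_t - \mathrm{id}$, and the need to propagate this conversion through arbitrarily many derivatives is what forces the Fréchet-topology interpretation of the hypothesis.
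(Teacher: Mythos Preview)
The paper does not supply its own proof of this lemma: it is quoted verbatim as \cite[Lemma~8.10]{AgrachevBook}, and the surrounding paragraph explicitly refers the reader to that book for all proofs in the section. So there is no ``paper's proof'' against which to compare your attempt.

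That said, your sketch is essentially the standard argument (and is the one found in Agrachev--Sachkov). The key moves---applying the variation formula to peel off the unperturbed flow $P_t$, integrating by parts to trade the primitive $V^j_\tau$ for the non-small $Y^j_\tau$, and then bootstrapping through the variational equations for higher derivatives---are all correct and are exactly what makes the proof work. The only place I would tighten the exposition is the phrase ``iterating the integration by parts \dots\ expresses $Q^j_t-\mathrm{id}$ as a series whose terms are controlled by powers of $\|V^j\|_{C^1}$'': one iteration already gives $\|Q^j_t-\mathrm{id}\|_\infty \le \|V^j\|_\infty + t_1\|dV^j\|_\infty\sup_\tau\|Y^j_\tau\|_\infty$, and since the $Y^j_\tau$ are uniformly bounded (they are pushforwards of the uniformly bounded $Z^j_\tau-Z_\tau$ by a fixed compact family of diffeomorphisms), this single step suffices for the $C^0$ convergence---no series is needed. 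The higher-derivative argument then proceeds exactly as you indicate.
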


Another, even more standard, result on the convergence of flows is
the following, that we find useful to state as a separate lemma. Its
difference from the previous one can be described as follows:
if the dependence on time of the $Z^j_{\tau}$ is prescribed independently on $j$ (it would be enough that they converge uniformly), then the uniform convergence of flows 
is guaranteed
 by the uniform convergence of the vector fields,
 even without any knowledge about the convergence of their derivatives with respect to the state variables.
For completeness, we provide a brief proof for it.

\begin{lemma}\label{l-unif-conv}
Let $\lambda_1,\dots,\lambda_m\in L^\infty([0,t_1],\mathbb{R})$. Let
$Z^j_\tau$, $n\in\mathbb{N}$, and $Z_{\tau}$, $\tau \in [0,t_1]$, be
non\textendash{}autonomous vector fields on $M$ of the form
$Z^j_\tau=\lambda_1(\tau)Z^j_1+\cdots+ \lambda_m(\tau)Z^j_m$ and
$Z_\tau=\lambda_1(\tau)Z_1+\cdots+ \lambda_m(\tau)Z_m$, with
$Z^j_i,Z_i\in \mathfrak{X}(M)$ with compact support. If
$Z^j_i\rightarrow Z_i$, $j\rightarrow \infty$, then
\[\overrightarrow{{\rm exp}}\int^t_0Z^j_{\tau}{\rm d} \tau \rightarrow \overrightarrow{{\rm exp}}\int^t_0Z_{\tau} {\rm
d}\tau, \quad j\rightarrow \infty,\]
 both convergences being
uniform (the first with respect to the state $q$ and the second with
respect to $(t,q)$).
\end{lemma}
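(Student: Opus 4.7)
The plan is to adapt the classical Gronwall proof of continuous dependence on data, compensating for the absence of any control on the derivatives of $Z^j_i$ by an ``approximate Lipschitz'' estimate that uniformly convergent smooth vector fields with fixed compact support automatically enjoy. First I reduce everything to a single compact set: since each $Z_i$ and each $Z^j_i$ has compact support and $Z^j_i\to Z_i$ uniformly, there is a compact $K\subset M$ containing all of these supports, and for any compact initial set $K_0\subset M$ the trajectories
$$q^j(t)=\overrightarrow{{\rm exp}}\int_0^t Z^j_\tau\,{\rm d}\tau(q),\qquad q(t)=\overrightarrow{{\rm exp}}\int_0^t Z_\tau\,{\rm d}\tau(q),$$
starting at $q\in K_0$, stay in a fixed compact $K_1\supset K\cup K_0$ for every $t\in [0,t_1]$.

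Working in finitely many charts covering $K_1$, or equivalently via a fixed auxiliary Riemannian distance $d$, I use that each $Z_i$ is smooth with compact support and hence Lipschitz on $K_1$ with a common constant $L$. Setting $\varepsilon_j:=\sum_{i=1}^{m}\|Z^j_i-Z_i\|_{\infty,K_1}$, which tends to $0$ by hypothesis, the triangle inequality yields the ``approximate Lipschitz'' bound
$$|Z^j_i(p)-Z^j_i(p')|\le L\,d(p,p')+2\|Z^j_i-Z_i\|_{\infty,K_1},\qquad p,p'\in K_1.$$
I then decompose
$$Z^j_\tau(q^j(\tau))-Z_\tau(q(\tau))=\bigl(Z^j_\tau(q^j(\tau))-Z^j_\tau(q(\tau))\bigr)+\bigl(Z^j_\tau(q(\tau))-Z_\tau(q(\tau))\bigr),$$
estimating the first summand by $C\bigl(L\,d(q^j(\tau),q(\tau))+\varepsilon_j\bigr)$ via the previous bound and the second by $C\varepsilon_j$, where $C$ depends only on $m$ and $\max_i\|\lambda_i\|_\infty$. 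Integrating the differential inequality satisfied by $d(q^j(t),q(t))$ and applying Gronwall's lemma yields $d(q^j(t),q(t))\le C_1\varepsilon_j e^{C_2 t_1}\to 0$, uniformly in $(t,q)\in[0,t_1]\times K_0$, which proves the claim. For the convergence uniform in $q$ only (the first part of the statement), the same argument is used with $K_0=\{q\}$ and the uniformity is automatic since the constants $C_1,C_2$ are independent of $q\in K_1$.

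The main obstacle is not conceptual but organisational: one must handle carefully the comparison of tangent vectors based at different points of the manifold, which is what makes the ``approximate Lipschitz'' estimate meaningful globally. Once a common Riemannian metric (or a finite chart cover of $K_1$) is fixed and the constant $L$ is extracted from the smoothness of the limits $Z_i$, the remainder of the argument reduces to the standard Gronwall loop and is essentially the same proof one would write for ordinary differential equations in $\mathbb{R}^n$ with coefficients converging uniformly.
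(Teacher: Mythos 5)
Your proof is correct but follows a genuinely different route from the one in the paper. The paper's proof is a two--line application of the variation formula~(\ref{variation_formula}): writing $X_\tau=Z_\tau$ and $Y_\tau=Z^j_\tau-Z_\tau$, the flow of $Z^j_\tau$ factors as the flow of $Z_\tau$ composed with the flow of $(\overrightarrow{\exp}\int_0^\tau\mathrm{ad}\,Z_s\,\mathrm{d}s)(Z^j_\tau-Z_\tau)$; since the pushforward operators $(\overrightarrow{\exp}\int_0^\tau Z_s\,\mathrm{d}s)_*$ form a compact family for $\tau\in[0,t_1]$, this second flow converges uniformly to the identity. You instead argue by a direct Gronwall loop, substituting for the missing Lipschitz control on $Z^j_i$ an ``approximate Lipschitz'' estimate obtained by comparing $Z^j_i$ with its smooth limit $Z_i$ at the cost of a uniformly small error $\varepsilon_j$. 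Each approach has its merits: the paper's argument is shorter, sits naturally inside the chronological--calculus framework already set up, and scales easily to stronger ($C^k$) conclusions, while yours is more elementary and self--contained, not relying on the variation formula at all, at the price of having to fix charts or an embedding to make sense of $d(q^j(\tau),q(\tau))$ and of the comparison of tangent vectors at distinct base points --- a technicality you correctly flag and which does resolve by the means you indicate. One small reading slip: the ``first convergence'' in the lemma (uniform in $q$) refers to the hypothesis $Z^j_i\to Z_i$, not to something that needs proof, so the final sentence about treating $K_0=\{q\}$ is superfluous; the substantive content is only the uniform-in-$(t,q)$ convergence of the flows, which your Gronwall argument does establish.
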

\begin{proof} Using the variation formula (\ref{variation_formula}) with
$X_\tau=Z_\tau$ and $Y_\tau=Z^j_\tau-Z_\tau$, we get
\begin{equation}
\label{special_variation_formula}
\begin{array}{l} \overrightarrow{{\rm exp}}\int^t_0 Z^j_{\tau}{\rm
d}\tau=\overrightarrow{{\rm exp}}\int^t_0 Z_{\tau}{\rm d}\tau\circ
\\ \overrightarrow{{\rm exp}}\int^t_0\left(\overrightarrow{{\rm
exp}}\int^\tau_0{\rm ad}Z_{s}{\rm d}s\right) (Z^j_{\tau}-Z_\tau)
{\rm d}\tau. \end{array}
\end{equation}
Since $\left( \overrightarrow{{\rm exp}}\int^\tau_0 Z_{s}{\rm
d}s\right)_*$, $\tau\in[0,t_1]$, is a compact family of operators,
then
 the last diffeomorphism in (\ref{special_variation_formula}) converges uniformly to the identity for $t\in[0,t_1]$ as $j$ goes to infinity.
\end{proof}

\section{Tracking problem}\label{STrack}

The need of trackability appears when one tries to follow a
particular trajectory on the configuration manifold, called \textit{reference} or
\textit{target} trajectory, which is not a solution of the FACCS
considered. A trajectory is successfully tracked if there exist
solutions to the FACCS that approximate it arbitrarily well.

Consider any distance ${\rm d}\colon Q\times Q \rightarrow \mathbb{R}$ on $Q$
whose corresponding metric topology coincides with the topology on $Q$.
From now on $I$ will denote the interval $[0,t_1]$, with $t_1>0$.

\begin{defn}
A control system $\Sigma$ is \textbf{trackable} if for every
continuous curve $\gamma\colon I\rightarrow Q$, for every $v\in T_{\gamma(0)}Q$ and for every
strictly
positive tolerance $\epsilon$, there exist a control $u^{\epsilon}\in L^\infty(I,U)$
such that the solution $\xi^{\epsilon}\colon I
\rightarrow Q$  to $\Sigma$ corresponding to  $u^\eps$ and  with
initial condition $\dot{\xi}^\epsilon(0)=v$ verifies
\[{\rm d} (\gamma(t), \xi^{\epsilon}(t))<\epsilon\]
for every time $t\in I$.
\end{defn}
\begin{remark}
Since any continuous 
curve can be uniformly approximated, with arbitrary precision, by a smooth curve having a prescribed tangent vector at its initial point,
then $\Sigma$ is trackable if and only if
every curve $\gamma\colon I\rightarrow Q$ on
$Q$ of class $\mathcal{C}^\infty$ is {\bf trackable for $\Sigma$}, i.e.,
for every  $\epsilon>0$, there exist $u^{\epsilon}\in L^\infty(I,U)$
such that the solution $\xi^{\epsilon}\colon I
\rightarrow Q$  to $\Sigma$ corresponding to  $u^\eps$ and  with
initial condition $\dot{\xi}^\epsilon(0)=\dot\gamma(0)$ verifies
${\rm d} (\gamma(t), \xi^{\epsilon}(t))<\epsilon$
for every $t\in I$.
\end{remark}

In order to give some insights into particular sufficient conditions
for tracking, we are going to review a result in the literature.

\begin{thm}\cite[Theorem 12.26]{2005BulloAndrewBook}\label{ACCStrack}
Let $\Sigma=(Q,\nabla,Y, \mathscr{Y},U)$ be a FACCS such that
\begin{itemize}
\item the distribution generated by $\mathscr{Y}=\{Y_1,\dots,Y_k\}$ is regular, that is, it is a subbundle of $TQ$;
\item $\langle Y_a\colon Y_a\rangle \in {\rm span}_{{\mathcal
C}^{\infty}(Q)}\mathscr{Y}$ for every $a\in \{1,\ldots,k\}$, i.e.,
$\langle Y_a\colon Y_a \rangle =\sum_{b=1}^k\sigma_{ab}Y_b$, where
$\sigma_{ab}\in {\mathcal C}^{\infty}(Q)$;
\item the distribution ${\rm Sym}^{(1)}(\mathscr{Y})$ defined by ${\rm Sym}^{(1)}(\mathscr{Y})_q={\rm
span}_{\mathbb{R}}\mathscr{Y}_q+{\rm span}_{\mathbb{R}}\{\langle
W\colon Z \rangle (q) | W,Z \in \mathscr{Y}\}$ is the entire tangent
bundle $TQ$.
\end{itemize}

Let $\gref\colon I \rightarrow Q$ be a reference trajectory of
class ${\mathcal C}^3$. Define the functions
$u_{\mathrm{ref},a},u_{\mathrm{ref},bc}\colon I \rightarrow
\mathbb{R}$, $a,b,c\in \{1,\ldots,k\}$, $b<c$, as solutions of class
${\mathcal C}^1$ to
\[\begin{array}{l}\nabla_{\dot{\gamma}_{\mathrm{ref}}(t)}\dot{\gamma}_{\mathrm{ref}}(t)-Y(t,\dot{\gamma}_{\mathrm{ref}}(t))
\\= \displaystyle\sum_{a=1}^k
u_{\mathrm{ref},a}(t)Y_a(\gamma_{\mathrm{ref}}(t))
\\+\displaystyle\sum_{b,c=1, b<c}^k u_{\mathrm{ref},bc}(t)\langle
Y_b\colon Y_c \rangle (\gamma_{\mathrm{ref}}(t)).\end{array}\]

Define $u_{\mathrm{slow}}\colon I\times TQ \rightarrow
\mathbb{R}^k$, $u_{\mathrm{osc}}\colon \mathbb{R}\times I \times TQ
\rightarrow \mathbb{R}^k$ by
\begin{small}\begin{equation*}\begin{array}{l} u_{\mathrm{slow},a}(t,v_q)=
u_{\mathrm{ref},a}(t)+\\
\displaystyle\sum_{b=1}^k\left(b-1+\displaystyle\sum_{c=b+1}^k
\frac{(u_{\mathrm{ref},bc}(t))^2}{4}\right)\sigma_{ab}(q),
\\ u_{\mathrm{osc},a}(\tau,t,v_q)=\displaystyle\sum_{c=1}^{a-1}\varphi_{{\rm lo}(c,a)}(\tau)\\ -\frac{1}{2}
\displaystyle\sum_{c=a+1}^ku_{\mathrm{ref},ac}(t) \varphi_{{\rm
lo}(a,c)}(\tau),\end{array}\end{equation*}\end{small}  \\
where ${\rm
lo}(a,b)=\displaystyle\sum_{j=1}^{a-1}(k-j)+(b-a)$ for $(a,b)\in
\{1,\ldots,k\}^2$, $a<b$, and for $i\in \mathbb{N}$
\begin{equation}\label{varphi}\varphi_i\colon \mathbb{R} \rightarrow
\mathbb{R}, \quad t \mapsto \frac{4 \pi i}{T}\cos \left( \frac{2 \pi
i}{T} t \right).\end{equation}

Then $\gref$ is trackable for $\Sigma$ and moreover the solutions
$\xi^\eps$ of $\Sigma$, $\eps>0$, corresponding to the controls
\[\begin{array}{rcl} u^{\epsilon}\colon \mathbb{R}\times TQ & \longrightarrow &
U\subset \mathbb{R}^k \\
(t,v_q) & \longmapsto &
u_{\mathrm{slow}}(t,v_q)+\frac{1}{\epsilon}u_{\mathrm{osc}}\left(\frac{t}{\epsilon},t,v_q\right)
\end{array}\]
with initial condition
$\dot\xi^\eps(0)=\dot\gamma_{\mathrm{ref}}(0)$ are such that ${\rm
d}(\gref(t),\xi^\eps(t))$ tends to zero as $\eps$ goes to zero
uniformly with respect to  $t\in I$.
\end{thm}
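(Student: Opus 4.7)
The plan is to lift the system to $TQ$ via (\ref{first}), so that $\xi^\eps=\tau_Q\circ\Upsilon^\eps$, where $\Upsilon^\eps$ is the integral curve starting at $\dot\gamma_{\mathrm{ref}}(0)$ of the non-autonomous vector field $W^\eps_\tau+X^\eps_\tau$, with ``slow'' part $W^\eps_\tau=Z+Y^V(\tau,\cdot)+\sum_a u_{\mathrm{slow},a}(\tau,\cdot)\,Y_a^V$ and ``oscillatory'' part $X^\eps_\tau=\tfrac{1}{\eps}\sum_a u_{\mathrm{osc},a}(\tau/\eps,\tau)\,Y_a^V$. I would then factor the flow by the variation formula (\ref{variation_formula}): letting $P^\eps_t=\overrightarrow{\mathrm{exp}}\int_0^t X^\eps_\tau\,d\tau$ and $Q^\eps_t=\overrightarrow{\mathrm{exp}}\int_0^t(\mathrm{Ad}\,P^\eps_\tau)W^\eps_\tau\,d\tau$, one has $\Upsilon^\eps(t)=P^\eps_t\circ Q^\eps_t(\dot\gamma_{\mathrm{ref}}(0))$. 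The key geometric observation is that, since vertical lifts commute, $[Y_a^V,Y_b^V]=0$, so $P^\eps_t$ is a fiberwise translation on $TQ$; in particular $\tau_Q\circ P^\eps_t=\tau_Q$, and hence $\xi^\eps(t)=\tau_Q\circ Q^\eps_t(\dot\gamma_{\mathrm{ref}}(0))$. It thus suffices to identify the limit of $Q^\eps_t$ as $\eps\to 0$.

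The algebraic heart is the computation, in the integral sense of Lemma \ref{ConvIntegral}, of $(\mathrm{Ad}\,P^\eps_\tau)W^\eps_\tau$. I would expand $\mathrm{Ad}\,P^\eps_\tau$ via the nilpotent formula (\ref{nilpotent_exponential}); the vanishing of $[Y_a^V,Y_b^V]$ kills the first iterated bracket acting on any $Y_b^V$, and the lowest surviving interaction with the drift $Z$ is $[Y_a^V,[Z,Y_b^V]]=\langle Y_a:Y_b\rangle^V$ by (\ref{triple_bracket}). The coefficient of $\langle Y_a:Y_b\rangle^V$ in the integral limit is a time-average of $\alpha^\eps_a(\tau)\cdot\tfrac{1}{\eps}u_{\mathrm{osc},b}(\tau/\eps,\tau)$, where $\alpha^\eps_a$ is the antiderivative of the $a$-th oscillatory control, uniformly bounded in $\eps$. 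A direct calculation with the $\varphi_i$'s, exploiting the zero-mean property $\tfrac{1}{T}\int_0^T\varphi_i\,d\tau=0$ together with the orthogonality $\tfrac{1}{T}\int_0^T\Phi_i\varphi_j\,d\tau=c\,\delta_{ij}$ (with $\Phi_i$ the antiderivative of $\varphi_i$ and $c$ an explicit constant), then shows: the indexing $\mathrm{lo}(b,c)$ assigns exactly one resonant frequency to each ordered pair $b<c$, so that the cross terms contribute precisely $u_{\mathrm{ref},bc}(\tau)\,\langle Y_b:Y_c\rangle^V$; the diagonal self-interactions produce multiples of $\langle Y_a:Y_a\rangle^V$, which by the hypothesis $\langle Y_a:Y_a\rangle=\sum_b\sigma_{ab}Y_b$ are absorbed precisely by the correction term $\sum_b(b-1+\sum_{c>b}(u_{\mathrm{ref},bc})^2/4)\sigma_{ab}$ in $u_{\mathrm{slow},a}$; all remaining cross-frequency and higher-order nested brackets vanish in the integral limit.

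To conclude, I would combine the above with Lemma \ref{ConvIntegral}: $\int_0^t(\mathrm{Ad}\,P^\eps_\tau)W^\eps_\tau\,d\tau$ converges uniformly (with all $q$-derivatives) on $I$ to $\int_0^t(Z+Y^V(\tau,\cdot)+\sum_a u_{\mathrm{ref},a}(\tau)Y_a^V+\sum_{b<c}u_{\mathrm{ref},bc}(\tau)\langle Y_b:Y_c\rangle^V)\,d\tau$, so that $Q^\eps_t$ converges uniformly to the flow of this averaged field. By the defining equation for $u_{\mathrm{ref},a},u_{\mathrm{ref},bc}$, its integral curve from $\dot\gamma_{\mathrm{ref}}(0)$ is exactly $\dot\gamma_{\mathrm{ref}}$; projecting via $\tau_Q$ yields $\mathrm{d}(\gref(t),\xi^\eps(t))\to 0$ uniformly on $I$. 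Existence of $\mathcal{C}^1$ solutions $u_{\mathrm{ref},a},u_{\mathrm{ref},bc}$ is guaranteed by the regularity of the control distribution and by $\mathrm{Sym}^{(1)}(\mathscr{Y})=TQ$. The main obstacle is the combinatorial bookkeeping of the multi-index resonances: one must verify that the choice of $\mathrm{lo}(a,b)$ and of the weight $-\tfrac{1}{2}u_{\mathrm{ref},ac}$ in $u_{\mathrm{osc},a}$ generate the symmetric products $\langle Y_b:Y_c\rangle$ with exactly the target coefficients, with no parasitic contributions from non-resonant pairs or from higher nested brackets.
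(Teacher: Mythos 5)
You should be aware that the paper does not itself prove this theorem: it is stated as a citation from \cite{2005BulloAndrewBook}, and Remark~\ref{betterT} observes that it follows from Proposition~\ref{Averaging}, whose proof in turn defers to Theorem~9.32 of the same reference. Your sketch is, however, a legitimate reconstruction of that omitted argument, and it uses precisely the chronological-calculus machinery the paper deploys later in the proofs of Theorems~\ref{thm-1-parameter} and \ref{thm-1-parameter-Z}: lift to $TQ$ via (\ref{first}), factor the flow with the variation formula (\ref{variation_formula}) so that the purely vertical oscillatory flow sits on the outside and disappears under $\tau_Q$ (this is exactly the role of the ``vertical flow'' $\mathscr{V}$ in the paper's Section~\ref{Sparameter}), expand $\mathrm{Ad}\,P^\eps_\tau$ with (\ref{nilpotent_exponential}), identify the bracket $[Y_a^V,[Z,Y_b^V]]=\langle Y_a\colon Y_b\rangle^V$ from (\ref{triple_bracket}) as the lowest surviving interaction, and close with Lemma~\ref{ConvIntegral}. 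This is the right plan, and it matches the paper's own later strategy rather than being a different route.

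One concrete slip in the bookkeeping: the orthogonality relation you invoke, $\tfrac{1}{T}\int_0^T\Phi_i\varphi_j\,d\tau=c\,\delta_{ij}$ with $\Phi_i$ an antiderivative of $\varphi_i$, cannot be right. Integrating by parts and using the zero-mean property of the $\varphi_i$ over a period, one sees that this integral is \emph{antisymmetric} in $(i,j)$, so its diagonal vanishes identically. The averaging kernel that actually governs the second-order term is the symmetric bilinear form $\Lambda_T(\alpha,\beta)=\tfrac{1}{2T}\int_0^T\bigl(\int_0^\tau\alpha\bigr)\bigl(\int_0^\tau\beta\bigr)\,d\tau$ defined in Section~\ref{SgeneralTrack}, and the orthonormality used is $\Lambda_T(\varphi_i,\varphi_j)=\delta_{ij}$; in the nilpotent expansion it is the \emph{time-ordered double integral} $\int_0^\tau\!\int_0^{s_1}$ of the product of oscillatory coefficients, suitably averaged, that produces $\Lambda_T$ rather than your first-derivative pairing. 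Once the pairing is corrected, the resonance accounting you describe (one frequency per pair $b<c$ via $\mathrm{lo}$, diagonal self-interactions absorbed through $\sigma_{ab}$ into $u_{\mathrm{slow}}$, non-resonant cross terms averaging out) goes through. A minor further point: Lemma~\ref{ConvIntegral} requires compactly supported vector fields, so as the paper does in Section~\ref{S2Notation}, you should insert a cut-off localized near $\mathrm{Im}\,\gref$ before invoking it.
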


\remark Observe that under the hypotheses of Theorem \ref{ACCStrack}
not only the tracking is guaranteed, but also the tracking control
law is given explicitly.

\section{A more general tracking result}\label{SgeneralTrack}
The idea of the following construction is to identify, given the set
$\mathscr{Y}$, a larger set of control vector fields $\K_1$ such
that every trajectory solution of the FACCS obtained by replacing
$\mathrm{span}_{\mathcal{C}^\infty(Q)}\mathscr{Y}$ by $\K_1$ can be
tracked by solutions of the original FACCS $\Sigma$. Repeating the
construction on $\K_1$ we obtain an even larger family $\K_2$ and so
on. If eventually $\K_l(q)=T_q Q$ for every $q\in Q$ for some
$l\in\mathbb{N}$, then we can show that the system is trackable.

In order to generalize the sufficient conditions for tracking
given in Theorem \ref{ACCStrack}, we construct the following
set of vector fields on $Q$:
\begin{equation}\label{setK}\begin{array}{rcl} {\mathscr{K}}_0&=&
\overline{{\rm span}_{\mathcal{C}^\infty(Q)}\mathscr{Y}},\\[1.5mm]
\mathscr{K}_l&=&\overline{\mathscr{K}_{l-1}- {\rm co}\left\{\langle
Z \colon Z \rangle \mid Z\in {\rm L}(\mathscr{K}_{l-1})\right\}},
\end{array}\end{equation} for $l\in \mathbb{N}$, where, for $A\subset \mathfrak{X}(Q)$,
 ${\rm L}(A)=A\cap (-A)$, ${\rm co}(A)$ denotes the convex hull  of $A$, and
 $\overline{A}$ is the closure of $A$
in $\mathfrak{X}(Q)$ with respect to the topology of
the uniform convergence on compact sets.
 For $A\subset
\mathfrak{X}(Q)$ we also write
$$A(q)= \{Y(q)\mid Y\in A\}\subseteq T_qQ.$$

\begin{prop}\label{Kcone} For every $l\in \mathbb{N}_0$, $\K_l$ is a convex cone of
$\mathscr{X}(Q)$. In particular,
for every $q\in Q$, $\K_l(q)$ is a
convex cone with vertex at $0\in T_qQ$.
\end{prop}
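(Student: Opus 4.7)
The plan is to proceed by induction on $l$. For $l=0$, $\K_0$ is defined as the closure in $\mathfrak{X}(Q)$ of the $\mathcal{C}^\infty(Q)$-linear span of $\mathscr{Y}$. Since $\mathfrak{X}(Q)$ with the topology of uniform convergence on compact sets is a topological vector space, the closure of a linear subspace is a linear subspace, and in particular a convex cone.

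For the inductive step, assume $\K_{l-1}$ is a convex cone. I would first verify that $\mathrm{L}(\K_{l-1}) = \K_{l-1} \cap (-\K_{l-1})$ is an $\mathbb{R}$-linear subspace of $\mathfrak{X}(Q)$: for $X,Y \in \mathrm{L}(\K_{l-1})$ both $X+Y$ and $-X-Y = (-X)+(-Y)$ lie in $\K_{l-1}$ by its closure under nonnegative linear combinations, so $X+Y \in \mathrm{L}(\K_{l-1})$, and closure under arbitrary real scalar multiplication is analogous.

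The crucial step is to show that
\[ C := \mathrm{co}\bigl\{\langle Z : Z \rangle \mid Z \in \mathrm{L}(\K_{l-1})\bigr\} \]
is itself a convex cone. For this I would exploit the quadratic homogeneity of the map $Z \mapsto \langle Z : Z \rangle$: given $\alpha \geq 0$, setting $\lambda = \sqrt{\alpha}$ yields $\alpha \langle Z : Z \rangle = \langle \lambda Z : \lambda Z \rangle$, and $\lambda Z \in \mathrm{L}(\K_{l-1})$ since $\mathrm{L}(\K_{l-1})$ is a linear subspace. Hence the generating set of $C$ is stable under multiplication by nonnegative scalars, and this property transfers to its convex hull (a convex combination scales elementwise to another convex combination). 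It then follows routinely that the algebraic difference $\K_{l-1} - C$ is a convex cone, and taking closure in the topological vector space $\mathfrak{X}(Q)$ preserves this property, so $\K_l$ is a convex cone.

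The pointwise claim follows immediately because the evaluation map $\mathrm{ev}_q \colon X \mapsto X(q)$ from $\mathfrak{X}(Q)$ to $T_qQ$ is $\mathbb{R}$-linear and continuous, hence maps convex cones to convex cones with vertex at $0\in T_qQ$. The main subtlety, easy to overlook, is the squaring identity $\alpha \langle Z : Z \rangle = \langle \sqrt{\alpha}\,Z : \sqrt{\alpha}\,Z \rangle$: without the quadratic homogeneity of $\langle Z : Z \rangle$ the generators of $C$ would not be stable under nonnegative scalings, and the recursion would produce only a convex set rather than a convex cone at each stage.
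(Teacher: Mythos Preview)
Your proof is correct and follows essentially the same approach as the paper: induction on $l$, with the key observation being the quadratic homogeneity $\alpha\langle Z:Z\rangle=\langle\sqrt{\alpha}\,Z:\sqrt{\alpha}\,Z\rangle$ together with the fact that $\mathrm{L}(\K_{l-1})$ is closed under real scalar multiplication, so that the generating set of $C$ is stable under nonnegative scalings. The paper's version is terser (it does not explicitly verify that $\mathrm{L}(\K_{l-1})$ is a linear subspace, since only closure under scalars is needed), but the substance is identical.
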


\begin{proof} The proposition is proved by induction having in mind that
the set $\K_l$ is a convex cone if it contains all conic
combinations of elements of $\K_l$. Remember that a conic
combination of elements of $\K_l$ is of the form
$\lambda_1W_1+\dots+\lambda_r W_r$ with $\lambda_i \in
\mathbb{R}_{\geq 0}$, $W_i\in \K_l$ for every $i\in
\{1,\dots,r\}$.

First, $\K_0$ is subspace of
$\mathscr{X}(Q)$ and thus it is a convex cone.

The induction step consists of proving that if $\K_i$ is a convex
cone, so is $\K_{i+1}$. First notice that, if $W\in L(\K_i)$ and $\lambda\in \R$, then, by the induction
hypothesis, $\lambda W\in L(\K_i)$ and thus $\lambda^2 \langle W\colon W\rangle$ belongs to $\{\langle Z\colon Z\rangle\mid Z\in L(\K_i)\}$.
Hence, 
$\K_{i+1}$ is the closure of the sum of two convex sets invariant by multiplication by any non-negative
scalar. Therefore, $\K_{i+1}$ is itself a convex cone.
 \end{proof}

Given two functions $\alpha,\beta:[0,T]\to \mathbb{R}$, let
\[\Lambda_{T}(\alpha,\beta)=\frac{1}{2T}\int_0^{T}\left(\int_0^\tau \alpha(s){\rm d}s\!\right)\left(\int_0^\tau \beta(s){\rm d}s\!\right){\rm d}\tau.
\]
We say that a \textit{sequence} of smooth $T$-periodic functions
$\psi_j:\mathbb{R}\to \mathbb{R}$, $j\in\mathbb{N}$, is
$\Lambda_T$\textendash{}\textit{orthonormal and zero-mean} if
$\int_0^T \psi_j(\tau)=0$ for every $j\in\N$ and
$\Lambda_T(\psi_j,\psi_m)=\delta_{jm}$ for $j,m\in\mathbb{N}$, where
 $\delta_{jm}$ denotes the Kronecker delta.

For instance, the sequence defined in (\ref{varphi}), is $\Lambda_T$-orthonormal and zero-mean.

\begin{prop}\label{Averaging} Let $\Sigma=(Q,\nabla,Y,\mathscr{Y},\mathbb{R}^{k})$ be a  FACCS and
fix $v_q\in TQ$, $T>0$, $n_1,\dots,n_k\in\mathbb{N}$,  and
$w=(w_1,\dots,w_k)\in L^\infty(I,\mathbb{R}^{k})$. For every
$\eps>0$ denote by $\xi^\eps:I\rightarrow Q$ the solution
 of
\begin{eqnarray*}\label{EqProp} \nabla_{\dot\xi^\eps(t)}\dot\xi^\eps(t)&=&Y(t,\dot\xi^\eps(t))\\
&+& \sum_{a=1}^{k}\frac{1}{\epsilon}
\psi_{n_a}\left(\frac{t}{\epsilon}\right)w_{a}(t)Y_a(\xi^\eps(t)),\end{eqnarray*}
with initial condition $\dot\xi^\eps(0)=v_q$, where $(\psi_{j})_{j\in\mathbb{N}}$
is a $\Lambda_T$-orthonormal and zero-mean sequence. Let also $\gamma:I\rightarrow Q$ be
the solution
 of
\begin{align}&\nabla_{\dot{\gamma}(t)}\dot{\gamma}(t)=Y(t,\dot{\gamma}(t)) \nonumber \\ & -\displaystyle\sum_{a,b=1}^{k}\delta_{n_a
n_b}w_a(t)w_b(t)\langle Y_a\colon Y_b \rangle(\gamma(t)),
\label{EqProp2} \end{align}  with initial condition
$\dot\gamma(0)=v_q$. Then
there exist $C,\eps_0>0$ such that ${\rm d}(\gamma(t),\xi^\eps(t))<
C \epsilon$ for every $t\in I$ and every $\eps\in(0,\eps_0)$.
\end{prop}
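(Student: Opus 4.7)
The plan is to lift the equations to $TQ$ via the first-order reformulation (\ref{first}) and then apply the variation formula (\ref{variation_formula}) of chronological calculus to separate the fast oscillatory vertical perturbation from the slow drift. Setting $F^\epsilon_\tau = \sum_a \frac{1}{\epsilon}\psi_{n_a}(\tau/\epsilon) w_a(\tau) Y_a^V$ and $S_\tau = Z + Y^V_\tau$, I factor
\[ \overrightarrow{\rm exp}\int_0^t(S_\tau + F^\epsilon_\tau){\rm d}\tau = \overrightarrow{\rm exp}\int_0^t F^\epsilon_\tau\, {\rm d}\tau \circ \overrightarrow{\rm exp}\int_0^t G^\epsilon_\tau\, {\rm d}\tau, \]
with $G^\epsilon_\tau = \overrightarrow{\rm exp}\int_0^\tau {\rm ad}\,F^\epsilon_s\,{\rm d}s\,(S_\tau)$. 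Since each $Y_a^V$ is vertical, the leftmost factor preserves the fibers of $\tau_Q$, so the curve $\xi^\epsilon$ on $Q$ depends on $\epsilon$ only through the pulled-back flow generated by $G^\epsilon_\tau$ applied to $\dot\xi^\epsilon(0)$.

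The second step computes $G^\epsilon_\tau$ in closed form. The vertical lifts $Y_a^V$ are $v$-independent and hence pairwise commute; by (\ref{triple_bracket}), $[Y_a^V,[Z,Y_b^V]] = \langle Y_a : Y_b\rangle^V$ is itself a vertical lift, and a direct coordinate check shows that $[Y_a^V,Y^V_\tau]$ is also a vertical lift (because $Y$ is affine in the velocities). Consequently all triple iterated ${\rm ad}$-brackets of $F^\epsilon$ with $S_\tau$ vanish, the expansion (\ref{nilpotent_exponential}) truncates at the quadratic level, and with $J_a^\epsilon(\tau) = \int_0^\tau \frac{1}{\epsilon}\psi_{n_a}(s/\epsilon) w_a(s)\,{\rm d}s$ one obtains
\[ G^\epsilon_\tau = S_\tau + \sum_a J_a^\epsilon(\tau)[Y_a^V, S_\tau] - \frac{1}{2}\sum_{a,b} J_a^\epsilon(\tau) J_b^\epsilon(\tau)\,\langle Y_a : Y_b\rangle^V. \]

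The crux of the proof is the averaging of $\int_0^t G^\epsilon_\tau{\rm d}\tau$. Denoting by $\Psi_{n_a}$ the primitive of $\psi_{n_a}$ vanishing at $0$, the zero-mean hypothesis makes $\Psi_{n_a}$ bounded and $T$-periodic; as is the case for the canonical example (\ref{varphi}), a second primitive is also bounded, so $\int_0^t \Psi_{n_a}(\tau/\epsilon)\phi(\tau)\,{\rm d}\tau = O(\epsilon)$ for every smooth $\phi$. A single integration by parts in the definition of $J_a^\epsilon$ then gives $\int_0^t J_a^\epsilon(\tau)\,[Y_a^V,S_\tau]\,{\rm d}\tau = O(\epsilon)$, killing the linear-in-$J_a^\epsilon$ contribution. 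For the quadratic term, the same integration by parts yields $J_a^\epsilon(\tau)= \Psi_{n_a}(\tau/\epsilon)\,w_a(\tau)$ modulo a lower-order oscillatory error, hence
\[ \int_0^t J_a^\epsilon(\tau) J_b^\epsilon(\tau)\,\phi(\tau)\,{\rm d}\tau = \int_0^t \Psi_{n_a}(\tau/\epsilon)\Psi_{n_b}(\tau/\epsilon)\,w_a(\tau)w_b(\tau)\phi(\tau)\,{\rm d}\tau + O(\epsilon), \]
and by period-averaging this converges to $2\delta_{n_a n_b}\int_0^t w_a(\tau)w_b(\tau)\phi(\tau)\,{\rm d}\tau$, the factor $2\delta_{n_a n_b}$ coming directly from $\Lambda_T(\psi_{n_a},\psi_{n_b}) = \frac{1}{2T}\int_0^T \Psi_{n_a}\Psi_{n_b} = \delta_{n_a n_b}$. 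Combining these estimates gives $\int_0^t G^\epsilon_\tau{\rm d}\tau = \int_0^t G^0_\tau{\rm d}\tau + O(\epsilon)$, where $G^0_\tau = Z + Y^V_\tau - \sum_{a,b}\delta_{n_a n_b}w_a(\tau)w_b(\tau)\,\langle Y_a : Y_b\rangle^V$ is precisely the $TQ$-lift of the averaged equation (\ref{EqProp2}).

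Applying Lemma~\ref{ConvIntegral} promotes this integrated convergence to uniform convergence of flows on compact sets; projecting via $\tau_Q$ yields $\xi^\epsilon\to\gamma$ uniformly on $I$, and the quantitative $O(\epsilon)$ bound on $\int_0^t(G^\epsilon_\tau-G^0_\tau)\,{\rm d}\tau$ combined with a Gronwall-type estimate on the flow difference gives the claimed ${\rm d}(\gamma(t),\xi^\epsilon(t))\leq C\epsilon$. The main obstacle is the averaging step, where one must carefully decouple the fast-oscillating and slowly varying components of $J_a^\epsilon$ and invoke the precise $\Lambda_T$ definition to extract the coefficient $\delta_{n_a n_b}$ rather than a naive $L^2$-orthogonality of the $\psi_{n_a}$. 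A secondary technical matter is that $w\in L^\infty$ rather than $C^1$: the integration-by-parts arguments are carried out after smoothing $w$ and then passed to the limit using the continuous dependence of flows on $\|w\|_\infty$.
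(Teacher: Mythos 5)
Your route is genuinely different from the paper's: the paper dispatches Proposition~\ref{Averaging} in one line by citing Theorem~9.32 of Bullo and Lewis, treating $u^a(t/\eps,t)=\psi_{n_a}(t/\eps)w_a(t)$, whereas you give a self-contained derivation through the chronological calculus (variation formula~(\ref{variation_formula}), nilpotent expansion~(\ref{nilpotent_exponential}), the bracket identity~(\ref{triple_bracket}) and the convergence Lemma~\ref{ConvIntegral}). This is essentially the same machinery the authors later deploy in the proofs of Theorems~\ref{thm-1-parameter} and~\ref{thm-1-parameter-Z}, so the architecture is well chosen: the factorization into a vertical flow (which projects trivially through $\tau_Q$) times the conjugated slow flow, the observation that all triple iterated brackets of the $Y_a^V$ with $S_\tau$ vanish so that~(\ref{nilpotent_exponential}) truncates, and the extraction of the coefficient $\overline{\Psi_{n_a}\Psi_{n_b}}=2\Lambda_T(\psi_{n_a},\psi_{n_b})=2\delta_{n_a n_b}$ that cancels the $1/2$ from the second-order term are all correct and reproduce~(\ref{EqProp2}).

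There is, however, a genuine gap in the averaging step, and you half-acknowledge it without closing it. The estimate $\int_0^t\Psi_{n_a}(\tau/\eps)\phi(\tau)\,{\rm d}\tau=O(\eps)$, and the accompanying claim that $J_a^\eps(\tau)=\Psi_{n_a}(\tau/\eps)w_a(\tau)$ up to a lower-order error, both require the period-mean $\bar\Psi_{n_a}=\frac1T\int_0^T\Psi_{n_a}$ to vanish. The hypothesis of $\Lambda_T$-orthonormality and zero mean only guarantees $\int_0^T\psi_j=0$, i.e.\ that the first primitive $\Psi_j$ is $T$-periodic and bounded; it says nothing about the mean of $\Psi_j$. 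If $\bar\Psi_{n_a}\neq 0$, averaging gives $\int_0^t\Psi_{n_a}(\tau/\eps)\phi\,{\rm d}\tau=\bar\Psi_{n_a}\int_0^t\phi+O(\eps)$, which is $O(1)$, and a short computation shows that the linear term then contributes $\bar\Psi_{n_a}w_a(0)\int_0^t[Y_a^V,S_\tau]\,{\rm d}\tau$ at order one, together with additional $\bar\Psi_{n_a}\bar\Psi_{n_b}$-weighted contributions in the quadratic term that do not match~(\ref{EqProp2}). This is not a vacuous worry within the paper: the sequence $(\psi_j)$ constructed in Section~\ref{SCaseH}, with $\psi_1\geq 0$ on $[0,T/2]$, has $\bar\Psi_1>0$. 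Your phrase ``as is the case for the canonical example~(\ref{varphi})'' tacitly restricts to the trigonometric family, for which $\bar\Psi_j=0$; to make your proof cover the proposition as stated you would either need to add the hypothesis that the second primitives are bounded, or show directly that the spurious $\bar\Psi$-terms cancel. Absent that, the argument only establishes the result for sequences with zero-mean primitives, which, while it includes~(\ref{varphi}), is strictly narrower than the stated hypothesis.

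A minor secondary point: your appeal to Lemma~\ref{ConvIntegral} gives uniform convergence of flows, which yields the qualitative conclusion ${\rm d}(\gamma(t),\xi^\eps(t))\to 0$. To obtain the quantitative bound $C\eps$ you invoke ``a Gronwall-type estimate on the flow difference'', which is the right idea but is not supplied by Lemma~\ref{ConvIntegral} itself; one must track the $O(\eps)$ bound on $\int_0^t(G^\eps_\tau-G^0_\tau)\,{\rm d}\tau$ explicitly through the variation formula applied once more around $G^0$, as the paper does in the proofs of the one-parameter theorems. This is fixable but should be said.
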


\begin{proof} The proof of this proposition follows from Theorem 9.32 in
\cite{2005BulloAndrewBook} by considering the function
$u^a(t/\eps,t)$ that appears there as $\psi_{n_a}(t/\eps)w_a(t)$.
\end{proof}

\begin{remark}\label{betterT}
Theorem~\ref{ACCStrack} follows directly from
Proposition~\ref{Averaging} (and is obtained as such in
\cite{2005BulloAndrewBook}). Therefore, some of the hypotheses of
Theorem~\ref{ACCStrack} can easily be relaxed: first of all the
distribution generated by $\mathscr{Y}$
need not
 be a subbundle of $TQ$.
Moreover, the hypothesis that $\mathrm{Sym}^{(1)}(\mathscr{Y})=TQ$ can be replaced by the requirement that
$$\nabla_{\dot\gamma_{\mathrm{ref}}(t)}{\dot \gamma_{\mathrm{ref}}(t)}-Y(t,\dot\gamma_{\mathrm{ref}}(t))\in \mathrm{Sym}^{(1)}(\mathscr{Y})_{\gamma_{\mathrm{ref}}(t)}$$
for every $t\in I$.
\end{remark}

\begin{thm}\label{GeneralTrack}
Let $\Sigma=(Q,\nabla,Y,\mathscr{Y},\mathbb{R}^{k})$ be  a FACCS.
Fix a 
reference trajectory $\gref:I\rightarrow Q$ of class
${\mathcal C}^\infty$. If for every $t\in I$ there exists $l\in
\mathbb{N}$ such that $\K_l(\gref(t))=T_{\gref(t)}Q$, then $\gref$
is trackable. Therefore, if for every $q\in Q$ there exists $l\in
\mathbb{N}$ such that $\K_l(q)=T_{q}Q$, then the control system
$\Sigma$ is trackable.
\end{thm}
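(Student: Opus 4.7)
I proceed by induction on $l\in\N_0$, proving the following claim: every smooth curve $\gamma:I\to Q$ whose forcing $F_\gamma(t):=\nabla_{\dot\gamma(t)}\dot\gamma(t)-Y(t,\dot\gamma(t))$ can be written as $\sum_{j=1}^m u_j(t)X_j(\gamma(t))$ with finitely many $X_j\in\K_l$ and $u_j\in L^\infty(I,\R_{\geq 0})$ is trackable by $\Sigma$ starting from $\dot\gamma(0)$. The theorem follows by applying this claim to $\gref$: first, the cones $\K_l$ are nondecreasing, because $Z=0\in L(\K_{l-1})$ places $0$ in $\mathrm{co}\{\langle Z\colon Z\rangle\mid Z\in L(\K_{l-1})\}$, giving $\K_{l-1}\subseteq\K_l$; second, the condition $\K_l(q)=T_qQ$ is open in $q$, because a positive spanning family extracted from $\K_l$ at $q$ remains positively spanning nearby. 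Compactness of $I$ then yields a single $L$ with $\K_L(\gref(t))=T_{\gref(t)}Q$ for every $t\in I$, and a partition of unity along $\gref(I)$ combined with local positive spanning sets in $\K_L$ produces a bounded representation of the continuous curve $F_{\gref}(t)$.

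\textbf{Base case ($l=0$).} Each $X_j\in\K_0=\overline{\mathrm{span}_{\mathcal{C}^\infty(Q)}\mathscr{Y}}$ is approximated, uniformly on compact sets, by $X_j^{(n)}=\sum_b f_{jb}^{(n)}Y_b$. The closed-loop equation $\nabla_{\dot\gamma^{(n)}}\dot\gamma^{(n)}=Y(t,\dot\gamma^{(n)})+\sum_j u_j(t)X_j^{(n)}(\gamma^{(n)})$ with $\dot\gamma^{(n)}(0)=\dot\gamma(0)$ has the $\Sigma$-admissible solution $\gamma^{(n)}$ driven by the $L^\infty$ open-loop control $u_b^{(n)}(t):=\sum_j u_j(t)f_{jb}^{(n)}(\gamma^{(n)}(t))$. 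Lemma~\ref{l-unif-conv}, applied to the first-order reformulation (\ref{first}) on $TQ$ after a standard cutoff in a compact neighborhood of $\gamma(I)$, then gives $\gamma^{(n)}\to\gamma$ uniformly on $I$, establishing trackability of $\gamma$.

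\textbf{Inductive step.} Assume the claim at level $l-1$ and fix $X_j\in\K_l$. By the definition of $\K_l$, each $X_j$ is approximated uniformly on compact sets by $X_j^{(n)}=W_j^{(n)}-\sum_i\mu_{ji}^{(n)}\langle Z_{ji}^{(n)}\colon Z_{ji}^{(n)}\rangle$ with $W_j^{(n)}\in\K_{l-1}$, $Z_{ji}^{(n)}\in L(\K_{l-1})$ and $\mu_{ji}^{(n)}\geq 0$. Lemma~\ref{l-unif-conv} again yields that the closed-loop trajectory $\gamma^{(n)}$ with forcing $\sum_j u_j X_j^{(n)}$ converges to $\gamma$ uniformly. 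I then invoke Proposition~\ref{Averaging}, whose proof relies only on the affine-connection structure and identity (\ref{triple_bracket}) and so applies to the virtual FACCS whose control vector fields are the finite family $\{W_j^{(n)}\}\cup\{Z_{ji}^{(n)}\}\subset\K_{l-1}$. With slow controls $u_j(t)$ along the $W_j^{(n)}$ and oscillatory controls $\psi_{n_{ji}}(t/\eps)\sqrt{u_j(t)\mu_{ji}^{(n)}}/\eps$ along the $Z_{ji}^{(n)}$ (with pairwise distinct frequencies $n_{ji}$), the averaged forcing reproduces exactly that of $\gamma^{(n)}$, so the oscillatory trajectory $\xi^\eps$ satisfies $\mathrm{d}(\gamma^{(n)}(t),\xi^\eps(t))\leq C\eps$. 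The forcing of $\xi^\eps$ admits the required level-$(l-1)$ representation: $u_j W_j^{(n)}\in\R_{\geq 0}\cdot\K_{l-1}$, and each signed oscillatory term splits as $(\cdot)^+ Z_{ji}^{(n)}+(\cdot)^-(-Z_{ji}^{(n)})$ with nonnegative coefficients, both $\pm Z_{ji}^{(n)}\in\K_{l-1}$ since $Z_{ji}^{(n)}\in L(\K_{l-1})$. The induction hypothesis then provides $\Sigma$-trajectories tracking $\xi^\eps$; composing the three approximations $\gamma\to\gamma^{(n)}\to\xi^\eps\to\text{($\Sigma$-trajectory)}$ with progressively tighter tolerances yields trackability of $\gamma$.

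\textbf{Main obstacle.} The most delicate step is confirming that the averaging machinery of Proposition~\ref{Averaging}, stated for the actual system $\Sigma$, extends to intermediate virtual FACCS whose control vector fields lie in $\K_{l-1}$, and orchestrating the three nested approximations (closure error inside $\K_l$, averaging error as $\eps\to 0$, and the inductive $\Sigma$-tracking error) with compatible tolerances. The decomposition of oscillatory controls into nonnegative combinations of $\pm Z$ with $Z\in L(\K_{l-1})$ is exactly what the definition of $L(\cdot)$ is designed to enable, and this is what makes the induction close.
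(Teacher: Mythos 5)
Your proposal is correct and follows essentially the same route as the paper: compactness together with a partition of unity along $\gref(I)$ produce a finite representation of the forcing with nonnegative coefficients on vector fields in a single $\K_L$; one then peels off one cone level at a time by first approximating the closure in (\ref{setK}) via Lemma~\ref{l-unif-conv}, then replacing the convex combination of symmetric products by oscillatory controls via Proposition~\ref{Averaging} (with the slow $W$-terms absorbed into the drift $Y$), using the lineality $L(\K_{l-1})$ to rewrite the signed oscillatory coefficients as nonnegative combinations of $\pm Z$; the final passage to a genuine $\Sigma$-trajectory again uses Lemma~\ref{l-unif-conv} on $\K_0=\overline{\mathrm{span}_{\mathcal{C}^\infty(Q)}\mathscr{Y}}$. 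The only organizational difference is that you phrase the descent as a forward induction on the cone level applied to a class of curves, whereas the paper runs an explicit backward recursion on a single chain $\xi_l=\gref,\xi_{l-1},\dots,\xi_0$ with error budget $\eps/l$; these are logically equivalent. You also make explicit the monotonicity $\K_{l-1}\subseteq\K_l$ (via $0\in L(\K_{l-1})$), which the paper uses implicitly when taking $l=\max\{l_1,\dots,l_r\}$.

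One small imprecision: your inductive claim is stated for \emph{smooth} curves $\gamma$, but the intermediate trajectories $\xi^\eps$ to which you apply the inductive hypothesis are only $\mathcal{C}^1$ with absolutely continuous velocity, since after averaging the coefficients involve $\max\{0,\pm\psi(t/\eps)\}$ and $\sqrt{u_j\mu}$ with $u_j\in L^\infty$, which are not smooth. The paper avoids this by phrasing the descent in terms of a \emph{regular parameterization} with merely continuous nonnegative coefficients. You should relax the regularity in your inductive claim from smooth to admissible trajectories with $L^\infty$ coefficients (nothing else in your argument uses smoothness of the intermediate curves).
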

\begin{proof} First of all, notice that the last part of the statement (the trackability of $\Sigma$)
directly follows from the first one (the trackability of $\gref$), which is proved below.

As the reference trajectory $\gref$ is defined on a
compact set and $\gref$ is continuous, then ${\rm Im}\gref$ is a
closed compact set. By hypothesis, for every $t\in I$ there
exists $l\in \mathbb{N}$ such that $\K_l(\gref(t))=T_{\gref(t)}Q$.
So there exist $n+1$ vector fields in $\K_l$ whose conic
combinations at $\gref(t)$ give the whole tangent space $T_{\gref(t)}Q$. The smoothness of these vector fields guarantees that
this is still true in an open neighborhood $U_t$ of $\gref(t)$. In
this way we construct an open cover of  ${\rm Im}\gref$. As  ${\rm
Im}\gref$ is compact, there exists a finite open subcover of
$\{U_t\}_{t\in I}$ given by $U_{t_1},\dots, U_{t_r}$. For each
$t_i$ there exists a different $l_i\in \mathbb{N}$ such that
$\K_{l_i}(\gref(t))=T_{\gref(t)}Q$ for every $t\in
\gref^{-1}(U_{t_i})$. Then $l=\max\{l_1,\dots,l_r\}$
satisfies $\K_l(\gref(t))=T_{\gref(t)}Q$ for every $t\in I$.

Moreover, there exists a partition of unity
subordinated to the finite open subcover $\{U_{t_i}\}_{i=1,\dots,r}$ (see \cite{Lee}) that
allows us to define a finite set of smooth global vector fields
$Z_a$ in $\K_l$ such that
\begin{equation}\label{parameter}\nabla_{\dot{\gamma}_{\mathrm{ref}}(t)}\dot{\gamma}_{\mathrm{
ref}}(t)-Y(t,\dot{\gamma}_{\mathrm{
ref}}(t))=\displaystyle\sum_{a=1}^{N_l}\lambda_a(t)
Z_a(\gref(t))\end{equation} for $t\in I$, $N_l\in \mathbb{N}$, where
$\lambda_a\colon I \rightarrow [0,+\infty)$ is of class ${\mathcal
C}^\infty$.

We introduce for a curve on $Q$ the notion
of having a \textit{regular
parameterization on $\K_i$}. A curve $\gamma\colon I\rightarrow Q$ 
admits a \textit{regular parameterization  on $\K_i$} if there exist
$N_i\in \mathbb{N}$, $\lambda_1,\dots,\lambda_{N_i}\in {\mathcal
C}(I)$ and $Z_1,\dots,Z_{N_i}\in \K_i$ such that $\lambda_a(t)\geq 0$ for every $t\in I$ and
$a\in \{1,\dots,N_i\}$ and
\[\nabla_{\dot{\gamma}(t)}\dot{\gamma}(t)-Y(t,\dot{\gamma}(t))=
\displaystyle\sum_{a=1}^{N_i}\lambda_a(t) Z_a(\gamma(t)).
\]

Fix $\epsilon>0$ and let $\xi_l=\gref$. Then $\xi_l$ admits a regular
parameterization  on $\K_l$.
We are going to prove by induction that there exists
a finite sequence
$\{\xi_0,\xi_{1},\dots,\xi_l\}$ of curves on $Q$ such that
each $\xi_i$ satisfies $\dot\xi_i(0)=\dot\gamma_{\mathrm{ref}}(0)$, admits a regular
parameterization on $\K_{i}$ and ${\rm
d}(\xi_i(t),\xi_{i+1}(t))<\eps/l$ for every $i=0,\dots,\l-1$ and
every $t\in I$.
The induction step claims that: for $i=0,\dots,l-1$
 if there exists $\xi_{i+1}\colon I
\rightarrow Q$ admitting a regular parameterization on $\K_{i+1}$,
then there exists $\xi_{i}\colon I \rightarrow Q$ with
$\dot{\xi}_{i+1}(0)=\dot{\xi}_{i}(0)$ admitting  a regular
parameterization on $\K_{i}$ and satisfying $ {\rm
d}(\xi_i(t),\xi_{i+1}(t))<\epsilon/ l$ for every $t\in I$.

Let us prove the induction step for $i$. Let
$Z_1,\dots,Z_{N_{i+1}}\in \K_{i+1}$ be the vector fields that
determine the regular parameterization of $\xi_{i+1}$. Here we split
the proof in two steps, considering first the special case:
\begin{enumerate}
\renewcommand{\theenumi}{\arabic{enumi}}
\item \label{case1}
$Z_a=F_a-G_a$ with
$F_a\in \K_i$ and $G_a
\in
{\rm co}\left\{\langle Z \colon Z \rangle \mid Z\in {\rm
L}(\K_i)\right\}$ for every $a=1,\dots,N_{i+1}$,
\end{enumerate}
and then the general case
\begin{enumerate}
\renewcommand{\theenumi}{\arabic{enumi}}
\setcounter{enumi}{1}
\item
\label{case2}
 $Z_a\in
\overline{\K_i-{\rm co}\left\{\langle Z \colon Z \rangle \mid Z\in {\rm
L}(\K_i)\right\}}.$
\end{enumerate}
Let us study case~\ref{case1}: $\xi_{i+1}$ admits the parameterization
\begin{eqnarray}\lefteqn{\nabla_{\dot{\xi}_{i+1}(t)}\dot{\xi}_{i+1}(t)-Y(t,\dot{\xi}_{i+1}(t))=}\nonumber
\\ &&
\displaystyle\sum_{a=1}^{N_{i+1}}\lambda_a(t)(F_a-G_a)(\xi_{i+1}(t)).
\label{FirstSystemStepl}\end{eqnarray}
Each $G_a$ is given by
$\displaystyle\sum_{b=1}^{N_a} \alpha_{a,b} \langle G_{a,b} \colon
G_{a,b}\rangle$ where $G_{a,b}\in {\rm L}(\K_i)$ and
$\alpha_{a,b}\geq 0$.
Then (\ref{FirstSystemStepl}) becomes
\begin{equation}\label{Stepi}\begin{array}{c}\nabla_{\dot{\xi}_{i+1}(t)}\dot{\xi}_{i+1}(t)-Y(t,\dot{\xi}_{i+1}(t))
=\displaystyle\sum_{a=1}^{N_{i+1}}\Big(\lambda_a(t) \\
F_a(\xi_{i+1}(t))
 -\displaystyle\sum_{b=1}^{N_a}\lambda_a(t)\alpha_{a,b}
\langle G_{a,b}\colon
G_{a,b}\rangle(\xi_{i+1}(t))\Big).\end{array}\end{equation} The
dynamics described in (\ref{Stepi}) are of the same form as in
(\ref{EqProp2}) as long as we take
$\mathscr{Y}=\{G_{a,b}\}_{a=1,\dots,N_{i+1}; b=1,\dots,N_a}$, and
$\delta_{n_{ab}n_{a'b'}}=\delta_{aa'}\delta_{bb'}$, $
w_{a,b}=\sqrt{\lambda_a\alpha_{a,b}}$.

Then by Proposition \ref{Averaging} the solutions to
(\ref{Stepi}) can be approximated by solutions to
\begin{equation}\label{xieps1}\begin{array}{c}
\nabla_{\dot{\xi}_i^{\eps_i}(t)}\dot{\xi}_i^{\eps_i}(t)-Y(t,\dot{\xi}^{\eps_i}_{i}(t))=
\displaystyle\sum_{a=1}^{N_{i+1}}\Big(\lambda_a(t)F_a(\xi_i^{\eps_i}(t))
\\ +\displaystyle\sum_{b=1}^{N_a}
\frac{1}{\eps_i}\psi_{n_{ab}}\left(\frac{t}{\eps_i}\right)w_{a,b}(t)G_{a,b}(\xi_i^{\eps_i}(t))\Big)\end{array}
\end{equation}
being $(a,b)\mapsto n_{ab}$ any injective map from
$\mathbb{N}\times\mathbb{N}$ to $\mathbb{N}$
and $(\psi_j)_{j\in\N}$ a $\Lambda_T$-orthonormal and zero-mean sequence.
More precisely, there exist
$C_i,\eps_{i,0}>0$ such that ${\rm
d}(\xi_i^{\eps_i}(t),\xi_{i+1}(t))<C_i\eps_i$ for every $t\in
I$, $\eps_i\in (0,\eps_{i,0})$. In particular,
we can choose $\eps_{i}$ such that $C_i\eps_i<\eps/l$.

The finite linear combination of elements in
$\K_i(\xi_i^{\eps_i}(t))$ for every $t\in I$ on the
right\textendash{}hand side of (\ref{xieps1}) does not necessarily
satisfy the non-negativeness of the coefficients. However,
$G_{a,b}\in {\rm L}(\K_i)$, so $-G_{a,b}\in {\rm L}(\K_i)$. Then we
can rewrite the coefficients of $G_{a,b}$ as follows:
\[\begin{array}{c}\psi_{n_{ab}}\left(\frac{t}{\eps_i}\right)w_{a,b}(t)G_{a,b}=
\max\left\{0,\psi_{n_{ab}}\left(\frac{t}{\eps_i}\right)\right\}\\
w_{a,b}(t)G_{a,b}
+\max\left\{0,-\psi_{n_{ab}}\left(\frac{t}{\eps_i}\right)\right\}w_{a,b}(t)(-G_{a,b}).\end{array}\]
Thus all the coefficients are continuous and non-negative. We can
conclude that $\xi_i^{\eps_i}$ admits a regular parameterization on $\K_{i}$.
We define then $\xi_i=\xi_i^{\eps_i}$ and the induction step has
been proved for $i$ in the case \ref{case1}.

Let us turn to case~\ref{case2}.
 We recall that the closure appearing in (\ref{setK}) is considered with
respect to the topology of the uniform convergence on compact sets.
Then there exist two sequences $F_a^j\in \K_i$ and  $G_a^j \in {\rm
co}\left\{\langle Z \colon Z \rangle \mid Z\in {\rm
L}(\K_i)\right\}$ such that $F_a^j-G_a^j$ converges uniformly to
$Z_a$ on a neighborhood of the curve $\xi_{i+1}$. For every $j\in
\mathbb{N}$ let $\xi_{i+1}^j\colon I \rightarrow Q$ be the solution
to
\begin{equation}\label{FirstSystemSteplN}\begin{array}{c}\nabla_{\dot{\xi}_{i+1}^j(t)}\dot{\xi}_{i+1}^j(t)-Y(t,\dot{\xi}^j_{i+1}(t))
\\=
\displaystyle\sum_{a=1}^{N_{i+1}}\lambda_a(t)(F_a^j-G_a^j)(\xi_{i+1}^j(t))\end{array}\end{equation}
satisfying $\dot{\xi}_{i+1}^j(0)=\dot{\xi}_{i+1}(0)$.
Then, thanks
to Lemma~\ref{l-unif-conv}, $\xi_{i+1}^j$ converges to $\xi_{i+1}$
uniformly on $I$
as $j$ tends to infinity. Take ${\bar \jmath}$ large enough such that 
\begin{equation*}\label{DNaprox}{\rm
d}(\xi_{i+1}^{\bar \jmath}(t),\xi_{i+1}(t))<\frac{\eps}{2l}\end{equation*} for
every $t\in I$.
We can apply to
$\xi_{i+1}^{\bar \jmath}$ the same reasoning as in the first case. Then
solutions to (\ref{FirstSystemSteplN}) are approximated by solutions
to 
(\ref{xieps1}) replacing $\xi_i^{\eps_i}$ by $\xi_i^{{\bar \jmath},\eps_i}$, $F_a$ by $F_a^{\bar\jmath}$  and
$G_{a,b}$ by $G^{\bar \jmath}_{a,b}$.
In other words, there exist $C_i,\eps_{i,0}>0$ such that ${\rm
d}(\xi_i^{{\bar \jmath},\eps_i}(t),\xi_{i+1}^{\bar \jmath}(t))<C_i\eps_i$ for every $t\in
I$, $\eps_i\in (0,\eps_{i,0})$. Again, $\eps_{i}$ can be chosen
in such a way that $C_i\eps_i<\frac\eps{2l}$ and we define
$\xi_i=\xi^{{\bar \jmath},\eps_i}_{i}$. Thus,
\begin{equation*}\begin{array}{rcl}{\rm
d}(\xi_i(t),\xi_{i+1}(t)) &\leq &{\rm
d}(\xi_i^{{\bar \jmath},\eps_i}(t),\xi_{i+1}^{\bar \jmath}(t))\\ &+& {\rm
d}(\xi_{i+1}^{\bar \jmath}(t),\xi_{i+1}(t))<\frac{\eps}{l}\end{array}\end{equation*}
and $\xi_i$ admits a regular parameterization on $\K_{i}$. Hence,
the induction step has been proved for $i$.

After the induction, we end up with a curve $\xi_0$ on $Q$ admitting a
regular
parameterization on $\K_{0}$ and such that
\[{\rm d}(\xi_0(t),\gref(t))\leq \displaystyle\sum_{i=0}^{l-1} {\rm
d}(\xi_i(t),\xi_{i+1}(t)) < l\frac{\eps}{l}=\eps\] for every $t\in
I$. (Recall that $\xi_l=\gref$.)
Moreover, by compactness of $I$, \[\bar\eps=
\eps-\max_{t\in I} {\rm d}(\xi_0(t),\gref(t))>0.\]
As $\K_0=\overline{{\rm span}_{{\mathcal
C}^{\infty}(Q)}\mathscr{Y}}$, we conclude from
Lemma~\ref{l-unif-conv} that there exist $\theta_{a,j}\in
\mathcal{C}^\infty(Q)$, $a\in\{1,\dots,N_0\}$, $j\in\{1,\dots,k\}$,
such that the solution to
\begin{align*}\nabla_{\dot{\xi}(t)}\dot{\xi}(t)&-Y(t,\dot{\xi}(t))\\ & =\displaystyle\sum_{a=1}^{N_0}
\sum_{j=1}^{k}\lambda_a(t)\theta_{a,j}({\xi}(t))Y_j(\xi(t))\end{align*}
with initial condition
$\dot\xi(0)=\dot{\xi}_0(0)=\dot{\gamma}_{\mathrm{ref}}(0)$ satisfies
${\rm d}(\xi(t),\xi_0(t))<\bar\eps$ for every $t\in I$. Thus, ${\rm d}(\xi(t),\gref(t))<\eps$ for every $t\in I$.

Since 
 $\xi$ is an admissible trajectory for $\Sigma$, we conclude that
$\gref$ is trackable for $\Sigma$ with the tracking control law
given by $u_j(t)=\sum_{a=1}^{N_0} \lambda_a(t)\theta_{a,j}({\xi}(t))$,
$j=1,\dots,k$.
\end{proof}

\begin{corol}\label{CorolH} Let $\Sigma=(Q,\nabla,Y,\mathscr{Y},\mathbb{R}^k)$ be a
FACCS. Define the following set of vector fields on $Q$ for $l\in
\mathbb{N}$:
\begin{equation}\label{setH} \begin{array}{ll} \HK_0=& {\rm span}_{\mathcal{C}^\infty(Q)}\mathscr{Y},\\
\HK_l=&\HK_{l-1}- {\rm co}\left\{\langle Z \colon Z \rangle \mid
Z\in {\rm L}(\HK_{l-1})\right\}.
\end{array}\end{equation}
Fix a smooth reference trajectory $\gref:I\rightarrow Q$. If for
every $t\in I$ there exists $l\in \mathbb{N}$ such that
$\HK_l(\gref(t))=T_{\gref(t)}Q$, then $\gref$ is trackable.
Therefore, if for every $q\in Q$ there exists $l\in \mathbb{N}$ such
that $\HK_l(q)=T_{q}Q$, then the control system $\Sigma$ is
trackable.
\end{corol}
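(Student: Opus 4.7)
The plan is to deduce this corollary directly from Theorem~\ref{GeneralTrack} by showing that the families $\HK_l$ are contained in the families $\K_l$, so that the (formally stronger) hypothesis on $\HK_l$ implies the hypothesis on $\K_l$ pointwise. In fact the definitions of $\HK_l$ and $\K_l$ are identical except that the recursion defining $\K_l$ takes the closure in $\mathfrak{X}(Q)$ with respect to the topology of uniform convergence on compact sets, while the recursion defining $\HK_l$ does not. Since taking closure only enlarges a set, one expects $\HK_l\subseteq \K_l$ for every $l\in\mathbb{N}_0$, and this is what I would establish first.

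I would argue the inclusion $\HK_l\subseteq \K_l$ by induction on $l$. The base case $\HK_0\subseteq \K_0$ is immediate, as $\K_0$ is the closure of $\HK_0$. For the inductive step, assume $\HK_{l-1}\subseteq \K_{l-1}$. Then $L(\HK_{l-1})=\HK_{l-1}\cap(-\HK_{l-1})\subseteq \K_{l-1}\cap(-\K_{l-1})=L(\K_{l-1})$, so
\[\{\langle Z\colon Z\rangle\mid Z\in L(\HK_{l-1})\}\subseteq \{\langle Z\colon Z\rangle\mid Z\in L(\K_{l-1})\},\]
and passing to convex hulls and Minkowski differences preserves this inclusion. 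Finally, any subset is contained in its closure, so
\[\HK_l=\HK_{l-1}-\mathrm{co}\{\langle Z\colon Z\rangle\mid Z\in L(\HK_{l-1})\}\subseteq \K_l,\]
completing the induction.

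The corollary then follows immediately. Indeed, evaluating at any point $q\in Q$ gives $\HK_l(q)\subseteq \K_l(q)\subseteq T_qQ$, so the hypothesis that for every $t\in I$ there exists $l\in\mathbb{N}$ with $\HK_l(\gref(t))=T_{\gref(t)}Q$ implies that the same equality holds with $\K_l$ in place of $\HK_l$. Theorem~\ref{GeneralTrack} then yields the trackability of $\gref$ for $\Sigma$. The global statement (trackability of $\Sigma$) follows identically from the global version of the hypothesis.

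There is no real obstacle in this proof; the only point that requires a moment of care is to check that the set operations used in the recursion (intersection with the negative, convex hull, Minkowski sum with a negated set) are monotone with respect to set inclusion, which is routine. Consequently, Corollary~\ref{CorolH} provides a practically convenient version of Theorem~\ref{GeneralTrack} in which one may dispense with closures when verifying the rank condition, at the possible cost of needing a larger value of $l$.
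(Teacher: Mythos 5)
Your proof is correct and follows essentially the same route as the paper: establish $\HK_l\subseteq\K_l$ by induction on $l$ (using monotonicity of the operations $L(\cdot)$, convex hull, Minkowski difference, and closure), then invoke Theorem~\ref{GeneralTrack}. The only cosmetic difference is that you spell out the monotonicity observations slightly more explicitly than the paper does.
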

\begin{proof} Let us prove by induction that
\[\HK_i\subseteq \K_i \quad i\in \mathbb{N}_0.\]
It is trivial that $\HK_0\subseteq \K_0$ (see (\ref{setK}) and
(\ref{setH})).

The claim now is that if $\HK_i\subseteq \K_i$, then
$\HK_{i+1}\subseteq \K_{i+1}$.  By definition, an element in
$\HK_{i+1}$ is of the form $F-G$ with $F\in \HK_i$ and $G\in {\rm
co}\left\{\langle Z \colon Z \rangle \mid Z\in {\rm
L}(\HK_i)\right\}$.
Since
$\HK_i\subseteq \K_i$, then $F\in
\K_i$ and ${\rm L}(\HK_i)\subseteq {\rm L}(\K_i)$. So $G \in {\rm
co}\left\{\langle Z \colon Z \rangle \mid Z\in {\rm
L}(\K_i)\right\}$. We can conclude that $\HK_{i+1}\subseteq
\K_{i+1}$.

By hypotheses, for every $t\in I$ there exists $l\in \mathbb{N}$
such that $\HK_l(\gref(t))=T_{\gref(t)}Q$. As $\HK_l\subseteq \K_l$,
we have $\K_l(\gref(t))=T_{\gref(t)}Q$.
The hypotheses of Theorem~\ref{GeneralTrack} are satisfied, so the result holds.
 \end{proof}

\begin{remark}\label{Hcase1}
If one follows the proof of Theorem~\ref{GeneralTrack} under the stronger hypotheses
of Corollary~\ref{CorolH}, then each step of the induction procedure
is of the type considered in case~\ref{case1}.
This will be important in the next section, where
we will turn such procedure in an algorithmic construction.
\end{remark}

\begin{corol}\label{CorolSym1} Let $\Sigma=(Q,\nabla,Y,\mathscr{Y},\mathbb{R}^k)$ be a FACCS. Define the following sets of
vector fields for $l\in \mathbb{N}$,
\begin{align} \mathscr{Z}_0=& \mathscr{Y},\nonumber\\
\mathscr{Z}_l=& \mathscr{Z}_{l-1} \cup \{\langle Z_a\colon
Z_b\rangle \mid Z_a,Z_b\in \mathscr{Z}_{l-1}\}.\label{setZ}
\end{align}
If there exists $l\in \mathbb{N}$ such that ${\rm
span}_{\mathbb{R}}\mathscr{Z}_l(q)=T_qQ$ for all $q\in Q$ and for
each $i\in\{0, \ldots, l-1\}$, for each $Z\in \mathscr{Z}_i$,
$\langle Z\colon Z\rangle\in {\rm
span}_{\mathcal{C}^\infty(Q)}\mathscr{Z}_i$, then the system $\Sigma$ is
trackable.
\end{corol}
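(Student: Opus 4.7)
The plan is to derive Corollary~\ref{CorolSym1} from Theorem~\ref{GeneralTrack} by showing, under the stated hypotheses, that $\K_l(q) = T_q Q$ for every $q\in Q$. The heart of the argument is the vector-field-valued inductive claim
\[\mathscr{Z}_i\subseteq L(\K_i),\qquad i\in\{0,1,\ldots,l\},\]
which, combined with the linearity of $L(\K_l)$ and the spanning hypothesis $\mathrm{span}_{\mathbb{R}}\mathscr{Z}_l(q)=T_qQ$, immediately gives $T_qQ\subseteq L(\K_l)(q)\subseteq\K_l(q)$, so Theorem~\ref{GeneralTrack} applies. The base case is immediate because $\K_0=\overline{\mathrm{span}_{\mathcal{C}^\infty(Q)}\mathscr{Y}}$ is a closed linear subspace of $\mathfrak{X}(Q)$, whence $L(\K_0)=\K_0\supseteq\mathscr{Z}_0$.

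For the inductive step at $i\le l-1$, the only new elements of $\mathscr{Z}_{i+1}$ are $\langle Z_a:Z_b\rangle$ with $Z_a,Z_b\in\mathscr{Z}_i$. I use the polarization identities
\[\pm\langle Z_a:Z_b\rangle=\tfrac{1}{2}\langle Z_a:Z_a\rangle+\tfrac{1}{2}\langle Z_b:Z_b\rangle-\bigl\langle\tfrac{1}{\sqrt 2}(Z_a\mp Z_b):\tfrac{1}{\sqrt 2}(Z_a\mp Z_b)\bigr\rangle.\]
The inductive hypothesis $\mathscr{Z}_i\subseteq L(\K_i)$ together with linearity of $L(\K_i)$ places $\tfrac{1}{\sqrt 2}(Z_a\mp Z_b)$ in $L(\K_i)$, so the last term has the form $\langle Z:Z\rangle$ with $Z\in L(\K_i)$ and thus lies in $\mathrm{co}\{\langle Z:Z\rangle\mid Z\in L(\K_i)\}$. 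The corollary's hypothesis places the first two terms in $\mathrm{span}_{\mathcal{C}^\infty(Q)}\mathscr{Z}_i$; to put them in $\K_i$ one then needs $L(\K_i)$ to be a $\mathcal{C}^\infty(Q)$-module. Granted this, $\pm\langle Z_a:Z_b\rangle\in\K_i-\mathrm{co}\{\langle Z:Z\rangle\mid Z\in L(\K_i)\}\subseteq\K_{i+1}$, yielding $\langle Z_a:Z_b\rangle\in L(\K_{i+1})$.

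The main obstacle is therefore the module lemma: $L(\K_i)$ is a $\mathcal{C}^\infty(Q)$-module for every $i$, proved by a nested induction on $i$. The base is clear. For the step, the decisive identity is
\[f\,\langle Z:Z\rangle=\langle\sqrt f\,Z:\sqrt f\,Z\rangle-Z(f)\,Z,\qquad f>0\text{ smooth},\]
which follows from $\nabla_{\sqrt f\,Z}(\sqrt f\,Z)=\sqrt f\,Z(\sqrt f)\,Z+f\nabla_Z Z$ together with $2\sqrt f\,Z(\sqrt f)=Z(f)$. The inductive module property ensures $\sqrt f\,Z,\,Z(f)\,Z\in L(\K_i)$, so for any convex combination $G$ of self-symmetric products of elements of $L(\K_i)$ the quantity $-fG$ lies in $L(\K_i)-\mathrm{co}\{\langle Z:Z\rangle\mid Z\in L(\K_i)\}\subseteq\K_{i+1}$. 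Combined with the inductive closure of $\K_i$ under multiplication by $f\ge 0$, this shows $\K_{i+1}$ is closed under multiplication by $f>0$; the closure built into $\K_{i+1}$ then handles $f\ge 0$ via $f+\varepsilon\to f$ (this is the reason the argument works for $\K$ but not for $\HK$, since $\sqrt f$ fails to be smooth at zeros of $f$). Finally, writing any smooth $f=g-h$ with $g,h\ge 0$ smooth (via a smoothed $\max(\cdot,0)$) and using that $L(\K_{i+1})$ contains both $X$ and $-X$ gives $fX=gX+h(-X)\in\K_{i+1}$ and similarly $-fX\in\K_{i+1}$, completing the module lemma and thus the whole proof.
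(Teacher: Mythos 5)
Your proof is correct, but it takes a genuinely different route from the paper. The paper's proof runs a single induction on the stronger claim $\mathrm{span}_{\mathcal{C}^\infty(Q)}\mathscr{Z}_i\subseteq\K_i$, and in the inductive step it absorbs an arbitrary $\alpha\in\mathcal{C}^\infty(Q)$ directly into the polarization by expanding $\langle\alpha Z_a-Z_b\colon\alpha Z_a-Z_b\rangle$ and using the self-product hypothesis to push the cross terms back into $\mathrm{span}_{\mathcal{C}^\infty(Q)}\mathscr{Z}_i$. This avoids any need for a separate module statement, because the nonconstant coefficient is handled inside the symmetric product itself. You instead prove the weaker inclusion $\mathscr{Z}_i\subseteq L(\K_i)$ with a constant-coefficient polarization, and bridge the gap by isolating a structural fact---the ``module lemma'' that $\K_i$ is closed under multiplication by nonnegative smooth functions (hence $L(\K_i)$ is a $\mathcal{C}^\infty(Q)$-module)---which is proved through the identity $f\langle Z\colon Z\rangle=\langle\sqrt f\,Z\colon\sqrt f\,Z\rangle - Z(f)Z$ and, crucially, the topological closure built into (\ref{setK}). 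This is a real difference: the paper never states nor needs the module lemma, while your route makes it the load-bearing step. What your route buys is a reusable structural property of the cones $\K_i$ and, as you correctly note, a clear explanation of why the closure in (\ref{setK}) matters (the argument would fail for $\HK_i$ because $\sqrt f$ is not smooth where $f$ vanishes); what the paper's route buys is brevity and the fact that it works uniformly for $\HK$ as well (cf.\ the remark after the corollary). Two small presentational points: the induction hypothesis you actually carry is ``$\K_i$ closed under multiplication by smooth $f\ge 0$'' rather than literally ``$L(\K_i)$ is a module,'' so state it that way; and rather than a ``smoothed $\max$,'' the decomposition $f=(f^2+1)-(f^2-f+1)$ gives smooth nonnegative $g,h$ with $f=g-h$ at no cost.
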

\begin{proof} Let us prove by induction that \[{\rm span}_{{\mathcal
C}^\infty(Q)}\mathscr{Z}_i\subseteq \K_i, \quad i\in \mathbb{N}_0.\]
Once this inclusion is proved 
Theorem~\ref{GeneralTrack} guarantees the trackability of the system.

 It is
trivial by definition that ${\rm span}_{{\mathcal C}^\infty(Q)}
\mathscr{Z}_0\subseteq \K_0$.

Assume that ${\rm span}_{{\mathcal
C}^\infty(Q)}\mathscr{Z}_i\subseteq \K_i$ and let us prove the
inclusion for $i$. Since $\K_{i+1}$ is a convex cone by Proposition
\ref{Kcone} and ${\rm span}_{{\mathcal
C}^\infty(Q)}\mathscr{Z}_i\subset \K_{i+1}$, it is enough to prove
that $\alpha \langle Z_a \colon Z_b \rangle$ belongs to $\K_{i+1}$
for $Z_a,Z_b\in \mathscr{Z}_{i}$ and $\alpha\in {\mathcal
C}^\infty(Q)$. Thanks to (\ref{triple_bracket})
and  to the hypotheses on the symmetric products of elements of
$\mathscr{Z}_{i}$,
\begin{eqnarray*}
\langle \alpha Z_a \colon Z_b\rangle\in \alpha \langle Z_a \colon Z_b\rangle+{\rm span}_{{\mathcal
C}^\infty(Q)}\mathscr{Z}_{i},\\
\langle \alpha Z_a \colon \alpha Z_a\rangle,\langle  Z_b \colon
Z_b\rangle\in {\rm span}_{{\mathcal C}^\infty(Q)}\mathscr{Z}_{i}.
\end{eqnarray*}
Hence the symmetric product
\[\langle \alpha Z_a-Z_b \colon \alpha Z_a -Z_b\rangle\]
belongs to $ -2\alpha \langle Z_a \colon Z_b \rangle+{\rm
span}_{{\mathcal C}^\infty(Q)}\mathscr{Z}_{i}$. Thus, $\alpha\langle
Z_a \colon Z_b \rangle$ belongs to ${\rm span}_{{\mathcal
C}^\infty(Q)}\mathscr{Z}_{i}-\frac12\langle \alpha Z_a-Z_b \colon
\alpha Z_a -Z_b\rangle$, which is contained in $\K_{i+1}$.
\end{proof}

\remark The proof above actually shows that, under the assumptions of
Corollary~\ref{CorolSym1},
 ${\rm span}_{{\mathcal
C}^\infty(Q)}\mathscr{Z}_i\subseteq \HK_i$. It is easy to check that, in addition,
$\HK_i={\rm span}_{{\mathcal
C}^\infty(Q)}\mathscr{Z}_i$.

\remark The main interest of Corollary \ref{CorolSym1}
is that its hypotheses are formulated in terms of a finite set of vector fields, in contrast
with the infinite family of vector fields considered in
Theorem~\ref{GeneralTrack} and Corollary~\ref{CorolH}.

\subsection{Examples}\label{exps}

Let us consider some examples of mechanical systems for which the above results
guarantee the trackability, but Theorem \ref{ACCStrack} could not
guarantee it.

\subsubsection{Hovercraft}

Consider an elliptic hovercraft moving 
on the surface of a
fluid,
identified with
$\mathbb{R}^2$. The configuration manifold is $Q=S^1\times
\mathbb{R}^2$ with local coordinates $(\theta,x_1,x_2)$ where
$\theta$ is the attitude and $(x_1,x_2)$ is the position of the
center of symmetry of the hovercraft. Let $\omega$ and $(v_1,v_2)$ be
the standard angular and linear velocity, respectively, of the
hovercraft with respect to a body\textendash{}fixed coordinate frame
attached at the center of symmetry of the body and whose axes coincide with those of the ellipse. Assume that the center of mass
according to that body\textendash{}fixed coordinate frame is on the
horizontal axis and is different from the center of symmetry. Then 
the added inertia
matrix is the following $3\times 3$ symmetric matrix:
\begin{equation*}{\mathcal
M}= \left(\begin{array}{r|lc} a & 0 & c \\ \hline 0 & e & 0 \\ c & 0
& e
 \end{array}\right),\end{equation*}
 with $a,c,e>0$ (see
\cite{Lamb,Milne} for more details). Denote the corresponding
impulse vector by  $(\Pi,P_1,P_2)$ that is related to the velocities
through the inertia matrix ${\mathcal M}$ as follows
\[\begin{pmatrix} \Pi \\ P_1 \\ P_2
 \end{pmatrix} =
{\mathcal M}\begin{pmatrix}  \omega \\ v_1 \\ v_2 \end{pmatrix} .\] The dynamics of the systems governed by the Kirchhoff
equations in dimension 2 with two controls 
are
\[\begin{pmatrix}  \dot\theta \\ \dot{x} \\ \dot{y} \end{pmatrix} =
\begin{pmatrix} 1 & 0 & 0 \\ 0 & \cos \theta & -\sin \theta  \\ 0 &
\sin \theta & \cos \theta
 \end{pmatrix}\begin{pmatrix} \omega \\ x \\ y
 \end{pmatrix},\]\[\begin{pmatrix} \dot{\omega} \\ \dot{v}_1 \\ \dot{v}_2 \end{pmatrix} = {\mathcal M}^{-1} \begin{pmatrix}
 P \cdot v^{\perp} \\ \omega P^{\perp} \end{pmatrix} + \begin{pmatrix} u_1 \\ 0 \\ u_2\end{pmatrix}\]
where $w^{\perp}=(-w_2,w_1)$ denotes the rotation by 
$\pi/2$
of a vector
$w=(w_1,w_2)$ in $\mathbb{R}^2$.
The control vector fields 
are
$Y_1=(1,0,0)$ and $Y_2=(0,0,1)$. They correspond to
the external torque, usually called yaw, and the external force,
usually called surge, applied to the body.
Let
$\mathscr{Y}=\{Y_1,Y_2\}$.

The drift term $Z$ appearing in (\ref{first}) is, for the mechaincal
system considered here, the vector field corresponding to the
uncontrolled Kirchhoff's equations (notice that $Y=0$). Using
(\ref{triple_bracket}) we can compute $\langle Y_1\colon
Y_1\rangle=(0,2c/e,0)$. The sufficient conditions for tracking given
by Theorem \ref{ACCStrack} are not satisfied because $\langle
Y_1\colon Y_1\rangle\notin {\rm span}_{{\mathcal C}^{\infty}(Q)}
\{Y_1,Y_2\}$.

 However, due to Corollary \ref{CorolH} tracking is possible because
\[\HK_1(q)=T_qQ \; \forall \, q\in Q.\]
Indeed, for $w_1,w_2\in {\mathcal C}^\infty(Q)$,
\begin{align*}&\langle w_1Y_1+w_2Y_2\colon w_1Y_1+w_2Y_2
\rangle= \\ &\frac{1}{e}(0,2cw_1^2+2ew_1w_2,0)+ \stackrel{\in \;
{\rm span}_{{\mathcal
C}^{\infty}(Q)}\mathscr{Y}}{\overbrace{2\sum_{i,j=1}^2w_iY_i(w_j)Y_j}}\,,\end{align*}
where $Y_i(w_j)$ denotes the Lie derivative of $w_j$ with respect to
$Y_i$. In particular, taking as $w_1$ any nonzero constant function,
we get
\[{\rm span}_{{\mathcal C}^{\infty}(Q)}(0,1,0) \subset \HK_0- {\rm co}\left\{\langle Z \colon Z \rangle \mid Z\in
{\rm L}(\HK_0)\right\}.\]
So we conclude that $\HK_1(q)=T_qQ$ for
all $q\in Q$.

\subsubsection{Submarine}\label{SExSubmarine}

Let us apply Corollary \ref{CorolSym1} to determine the trackability of
a particular control system describing the motion of a submarine.
The system corresponds to the case $\gamma=0$  considered in
 \cite{Mario}.
It models a neutrally buoyant ellipsoid vehicle immersed in a
infinite volume fluid that is inviscid, incompressible and whose
motion is irrotational. The dynamics are obtained through Kirchhoff
equations \cite{Lamb} and have a particularly simple form due to
some symmetry assumption on the distribution of mass (see
\cite{Mario} for details and also \cite{munnier}
for general overview of control 
motion in a potential
fluid).

Consider the coordinates $(\omega,v)$ for the angular and linear
velocity of the ellipsoid with respect to a body\textendash{}fixed
coordinate frame. Then the impulse $(\Pi,P)$ of the system is given
by
\[ \begin{pmatrix} \Pi \\ P \end{pmatrix} = {\mathcal M}
\begin{pmatrix} \omega \\ v \end{pmatrix}  \]
where, under the symmetry assumptions mentioned above,
$$
{\mathcal M}=\mathrm{diag}(J_1,J_1,J_3,M_1,M_2,M_3)
$$
with $M_1\neq M_2$,
where
$\mathrm{diag}(J_1,J_1,J_3)$ is the usual inertia matrix and $M_1,M_2,M_3$ take into account the mass of the submarine and the added masses due to the action of the fluid.

The configuration manifold $Q$ for this problem is the Special
Euclidean group or the group of rigid motions $SE(3)$, which is
homeomorphic to $\mathbb{R}^3\times SO(3)$. Let $(r,A)\in SE(3)$ be
the position and the attitude of the ellipsoid.
Denote by $S\colon \mathbb{R}^3\rightarrow \mathfrak{so}(3)$ the
linear bijection between $\mathbb{R}^3$ and the linear algebra
$\mathfrak{so}(3)$ of $SO(3)$ such that
\[S(x_1,x_2,x_3)=\begin{pmatrix}0 & -x_3 & x_2 \\ x_3 & 0 & -x_1 \\ -x_2 & x_1 & 0
\end{pmatrix}\]
 The dynamics
of the controlled system are given by
\begin{equation}\label{eq-below}
\ds{\frac{{\rm d}r}{{\rm d} t}= Av,\quad \frac{{\rm d}A}{{\rm d} t}=
A S(\omega),}\end{equation}
and
\begin{equation}\label{eq-KirchSubm}
\ds{\frac{{\rm d} \Pi}{{\rm d}t}} = \Pi \times \omega + P \times v+
\begin{pmatrix} u_1 \\ u_2 \\ 0 \end{pmatrix}, \quad
\ds{\frac{{\rm d} P}{{\rm d}t}}  = P \times \omega+
\begin{pmatrix} 0 \\ 0 \\ u_3 \end{pmatrix}.\end{equation}
The control vector fields are $Y_1=\partial /\partial \Pi_1$,
$Y_2=\partial /\partial \Pi_2$ and $Y_3=\partial /\partial P_3$.
They correspond to a linear acceleration along one of the three axes of the submarine and to two angular accelerations around the other two axes.

Due to Theorem~1.2 in \cite{Mario}, we know that system
(\ref{eq-below})--(\ref{eq-KirchSubm}) is trackable.

This case cannot be recovered from Theorem~\ref{ACCStrack} because
\[{\rm Sym}^{(1)}(\mathscr{Y})\neq TQ.\]
Indeed, it can be computed that
 \begin{align*}
 &\langle Y_1\colon
 Y_2\rangle=0, \langle Y_2\colon Y_3\rangle=\frac{M_3}{M_1}\frac{\partial}{\partial
P_1}, \\ &
 \langle Y_1\colon Y_3\rangle=-\frac{M_3}{M_2}\frac{\partial}{\partial
 P_2}.
 \end{align*}

However, this case is covered by Corollary~\ref{CorolSym1}
with $l=2$, because
 \begin{align*}&0=\langle Y_1\colon Y_1\rangle=\langle Y_2\colon Y_2\rangle=\langle Y_3\colon Y_3\rangle=\\ &=\langle \langle Y_2\colon Y_3\rangle\colon \langle Y_2\colon Y_3\rangle\rangle=\langle \langle Y_1\colon Y_3\rangle\colon \langle Y_1\colon Y_3\rangle\rangle
, \\ &
  \langle \langle Y_2\colon Y_3 \rangle \colon \langle Y_1 \colon  Y_3 \rangle \rangle=-\frac{M_3^2}{J_3}\left(\frac{1}{M_1}-
  \frac{1}{M_2}\right)\frac{\partial}{\partial \Pi_3}.\end{align*}
Thus, $\langle Z\colon Z\rangle=0$ for every $Z\in \mathscr{Z}_2$
and
 $T_qQ={\rm
span}_{\mathbb{R}}\mathscr{Z}_2(q)={\rm span}_{\mathbb{R}}\{Y_1(q),Y_2(q), Y_3(q),\langle Y_2\colon Y_3\rangle(q),$ $\langle Y_1\colon Y_3\rangle(q),$
 $\langle \langle Y_2\colon Y_3 \rangle \colon \langle Y_1 \colon  Y_3 \rangle
 \rangle(q)\}$ for all $q\in Q$.

The model studied in \cite{Mario} can therefore be handled with the techniques proposed here. In particular, we can obtain for it one\textendash{}parameter tracking control laws, as explained in the next section.

\section{One\textendash{}parameter tracking control laws}\label{Sparameter}
\newcommand{\lo}{j}

The aim of this section is to provide an algorithmic implementation
of the results obtained in the previous one about the existence of
controls yielding tracking. This will be done separately under the
hypotheses of Corollaries~\ref{CorolH} and \ref{CorolSym1}, using two different algorithms. The
first one is based on the procedure proposed in the
proof of Theorem~\ref{GeneralTrack}, while the second one exploits the construction proposed in \cite{2005BulloAndrewBook} and
recalled in Theorem~\ref{ACCStrack}.

In both cases we will consider a reference trajectory ${\gamma}_{\mathrm{ref}}:I\to Q$, which is assumed to be of class ${\mathcal{C}}^\infty$.

A simple, albeit crucial, fact that will be used several times in
the following sections is stated in the lemma below.

\begin{lemma}\label{simple_fact}
If $f\colon \mathbb{R}\times I\to\R$, $(\tau,s)\mapsto f(\tau,s)$, is smooth on $\mathbb{R}\times I$ and $T$-periodic with respect to 
$\tau$, then \begin{eqnarray*} \int_0^t f(s/\hat\eps,s){\rm
d}s&=&\int_0^t \bar f(s)ds+O(\hat\eps\|f\|_\infty)\\
&+&O(\hat\eps\|\partial_2 f\|_\infty)\end{eqnarray*}
for $\hat\eps$ close to zero,  where $\bar f(s)=(1/T)\int_0^T f(\tau,s){\rm d}\tau$
 and $\partial_2$ denotes the partial derivative with respect to the second variable.
\end{lemma}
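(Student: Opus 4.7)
The plan is to exploit the fast oscillation in the first argument by means of a standard averaging decomposition, partitioning $[0,t]$ into subintervals whose length equals one period of the fast variable. Concretely, I would set $N=\lfloor t/(T\hat\eps)\rfloor$ and $s_k=kT\hat\eps$ for $k=0,\dots,N$, and write
\[
\int_0^t f(s/\hat\eps,s)\,{\rm d}s=\sum_{k=0}^{N-1}\int_{s_k}^{s_{k+1}}f(s/\hat\eps,s)\,{\rm d}s+\int_{s_N}^{t}f(s/\hat\eps,s)\,{\rm d}s,
\]
and do the analogous splitting for $\int_0^t \bar f(s){\rm d}s$. The tail intervals have length at most $T\hat\eps$, so they contribute $O(\hat\eps\|f\|_\infty)$ on both sides, which will account for the $O(\hat\eps\|f\|_\infty)$ error in the conclusion.

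On each full slice $[s_k,s_{k+1}]$ I would change variables $\tau=s/\hat\eps$ to obtain
\[
\int_{s_k}^{s_{k+1}}f(s/\hat\eps,s)\,{\rm d}s=\hat\eps\int_{kT}^{(k+1)T}f(\tau,\hat\eps\tau)\,{\rm d}\tau.
\]
A first-order Taylor expansion of $f$ in its second argument gives $f(\tau,\hat\eps\tau)=f(\tau,s_k)+O(\hat\eps T\|\partial_2 f\|_\infty)$ uniformly for $\tau\in[kT,(k+1)T]$, so that by the $T$-periodicity in $\tau$
\[
\int_{s_k}^{s_{k+1}}f(s/\hat\eps,s)\,{\rm d}s=\hat\eps T\bar f(s_k)+O(\hat\eps^2 T^2\|\partial_2 f\|_\infty).
\]
Summing over $k=0,\dots,N-1$, the accumulated Taylor error is $N\cdot O(\hat\eps^2 T^2\|\partial_2 f\|_\infty)=O(\hat\eps\|\partial_2 f\|_\infty)$ since $N\hat\eps T\le t$.

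It remains to compare the resulting Riemann-type sum $\sum_{k=0}^{N-1}\hat\eps T\bar f(s_k)$ with $\int_0^{s_N}\bar f(s)\,{\rm d}s$. The function $\bar f$ is smooth and $\|\bar f'\|_\infty\le \|\partial_2 f\|_\infty$, so the standard Riemann sum estimate gives an error $O(\hat\eps T\|\bar f'\|_\infty)=O(\hat\eps\|\partial_2 f\|_\infty)$. Adding the tail contributions bounded by $T\hat\eps\|f\|_\infty$ on each side yields the claimed estimate. The argument is essentially bookkeeping; the only mildly delicate point is to make sure the error on each slice is $O(\hat\eps^2)$ rather than $O(\hat\eps)$, which is precisely what allows the $N\sim 1/\hat\eps$ slices to sum to an $O(\hat\eps)$ error. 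No conceptual obstacle is expected.
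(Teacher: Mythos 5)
Your proof is correct. The paper states Lemma~\ref{simple_fact} without proof (calling it ``a simple, albeit crucial, fact''), so there is no argument to compare against, but your period-by-period decomposition is the natural and standard way to establish it: you slice $[0,t]$ into $N=\lfloor t/(T\hat\eps)\rfloor$ intervals of length $T\hat\eps$, absorb the incomplete tail into $O(\hat\eps\|f\|_\infty)$, use $T$-periodicity to turn each full slice into $\hat\eps T\bar f(s_k)$ up to a Taylor error of order $\hat\eps^2 T^2\|\partial_2 f\|_\infty$, and then compare the resulting left-endpoint Riemann sum of $\bar f$ to $\int_0^{s_N}\bar f$, using $\|\bar f'\|_\infty\le\|\partial_2 f\|_\infty$. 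Both $O(\hat\eps^2)$-per-slice errors accumulate over $N\sim 1/\hat\eps$ slices to $O(\hat\eps)$, with implied constants depending on $T$ and $t_1$, exactly as the statement requires. No gaps.
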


\subsection{Case $\HK$}\label{SCaseH}

As noticed in Remark~\ref{Hcase1}, the hypotheses
of Corollary~\ref{CorolH}
guarantee that every step of the induction argument proposed in the
proof of Theorem~\ref{GeneralTrack} falls
in the framework of case~\ref{case1} (see page~\pageref{case1}).
Hence, starting from a parameterization
\[\nabla_{\dot{\gamma}_{\mathrm{ref}}(t)}\dot{\gamma}_{\mathrm{ref}}(t)-Y(t,\dot{\gamma}_{\mathrm{ref}}(t))
=\sum_{a=1}^{N_l}\lambda_a(t) Z_a^l(\gref(t))\] of $\gref\colon I
\rightarrow Q$, with $Z_a^l\in \HK_l$ and $\lambda_a$ smooth and
non-negative on $I$ for every $a=1,\dots,N_l$, we can construct
algorithmically a $l$-parameter family of admissible trajectories
$\xi^{\eps_1,\dots,\eps_l}$ of $\Sigma$
with $\eps_1,\dots,\eps_l>0$ such that $\xi_l=\gref$, the uniform
limit
\begin{equation}\label{cascade-limits}
\xi_{i}^{\eps_1,\dots,\eps_{l-i}}=\lim_{\eps_l\rightarrow
0}\lim_{\eps_{l-1}\rightarrow 0}\cdots \lim_{\eps_{l-i+1}\rightarrow
0}\xi^{\eps_1,\dots,\eps_l}
\end{equation}
exists for every $i=1,\dots,l$ and every $\eps_1,\dots,\eps_{l-i}>0$
and satisfies
\begin{align}\nabla_{\dot{\xi}_{i}^{\eps_1,\dots,\eps_{l-i}}(t)}\dot{\xi}_{i}^{\eps_1,\dots,\eps_{l-i}}(t)
&-Y(t,\dot{\xi}_{i}^{\eps_1,\dots,\eps_{l-i}}(t)) \nonumber \\ & \in
\HK_{i}({\xi}_{i}^{\eps_1,\dots,\eps_{l-i}}(t)). \label{EqParamCaseH}
\end{align}
We also  write
$\xi_0^{\eps_1,\dots,\eps_l}$ for $\xi^{\eps_1,\dots,\eps_l}$.
It is important to notice that the order of the limits in
(\ref{cascade-limits}) cannot in general be reversed.

Let us recall to which extent the construction is algorithmic. Fix
any injective map $\lo:\mathbb{N}\times\mathbb{N}\rightarrow
\mathbb{N}$. By backward recursion on $i$, if
$\xi_i^{\eps_1,\dots,\eps_{l-i}}$ satisfies
\begin{align}
\nabla_{\dot{\xi}_{i}^{\eps_1,\dots,\eps_{l-i}}(t)}\dot{\xi}_{i}^{\eps_1,\dots,\eps_{l-i}}(t)-
Y(t,\dot{\xi}_{i}^{\eps_1,\dots,\eps_{l-i}}(t)) \nonumber
\\ =\sum_{a=1}^{N_i}\lambda_{a}^{\eps_1,\dots,\eps_{l-i}}(t) Z_a^i
({\xi}_{i}^{\eps_1,\dots,\eps_{l-i}}(t)) \label{e-para}
\end{align}
with $\lambda_{a}^{\eps_1,\dots,\eps_{l-i}}\in
\mathcal{C}(I,[0,+\infty))$ and $Z_a^i
\in \HK_i$, then $\xi_{i-1}^{\eps_1,\dots,\eps_{l-i+1}}$ is defined
as the solution to
\begin{align}&
\nabla_{\dot{\xi}_{i-1}^{\eps_1,\dots,\eps_{l-i+1}}(t)}\dot{\xi}_{i-1}^{\eps_1,\dots,\eps_{l-i+1}}(t) \nonumber\\
& -Y(t,\dot{\xi}_{i-1}^{\eps_1,\dots,\eps_{l-i+1}}(t)) \nonumber \\
&=
\sum_{a=1}^{N_i}\Big(\lambda_{a}^{\eps_1,\dots,\eps_{l-i}}(t)F_a^{i-1}
({\xi}_{i-1}^{\eps_1,\dots,\eps_{l-i+1}}(t))+\nonumber \\ &
\frac{1}{\eps_{l-i+1}}\sum_{b=1}^{\hat
N_a^i}\psi_{\lo(a,b)}\left(\frac{t}{\eps_{l-i+1}}\right)\sqrt{
\lambda_{a}^{\eps_1,\dots,\eps_{l-i}}(t)\alpha^{i-1}_{a,b}}\nonumber
\\ &
G_{a,b}^{i-1}
({\xi}_{i-1}^{\eps_1,\dots,\eps_{l-i+1}}(t))\Big)\label{extras}
\end{align}
with
$\dot{\xi}_{i-1}^{\eps_1,\dots,\eps_{l-i+1}}(0)=\dot{\gamma}_{\mathrm{ref}}(0)$,
where
$$Z_a^i
=F_a^{i-1}
-\sum_{b=1}^{\hat N^i_a}\alpha^{i-1}_{a,b}
\langle G_{a,b}^{i-1}
\colon G_{a,b}^{i-1}
\rangle
$$
and $F_a^{i-1}
\in \HK_{i-1}$, $
G_{a,b}^{i-1}
\in {\rm L}(\HK_{i-1})$, $\alpha^{i-1}_{a,b}\geq 0$.
Recall that $(\psi_j)_{j\in\N}$ is a $\Lambda_T$-orthonormal and zero-mean sequence, for some $T>0$.
Each
$\lambda_a^{\eps_1,\dots,\eps_{l-i+1}}(\cdot)$ is either equal to
some $\lambda_{\tilde a}^{\eps_1,\dots,\eps_{l-i}}(\cdot)$ or is of
the form
$$\frac{\sqrt{
\lambda_{\tilde
a}^{\eps_1,\dots,\eps_{l-i}}(\cdot)\alpha^{i-1}_{\tilde
a,b}}}{\eps_{l-i+1}}\max\left\{\upsilon \psi_{\lo(\tilde
a,b)}\left(\frac{\cdot}{\eps_{l-i+1}}\right),0\right\}$$ with
$\upsilon$ equal to $1$ or $-1$.

We choose $(\psi_j)_{j\in\N}$ as follows: we require $\psi_1$ to be positive in $(0,T/2)$ and to annihilate, together with all its derivatives, at $0$ and $T/2$. We also require it to satisfy
$$\psi_1\left(\frac T2-t\right)=\psi_1(t),\quad t\in\left[0,\frac T2\right],$$
and we extend it by
$$\psi_1(t)=-\psi_1\left(t-\frac T2\right),\quad t\in\left[\frac T2,T\right],$$
and by $T$-periodicity over $\R$. Finally we normalize $\psi_1$ in such a way that
$\Lambda_T(\psi_1,\psi_1)=1$.
Then we define $\psi_j$ by
$$\psi_j(t)=2^j \psi_1( 2^j t).$$

Such choice of $(\psi_j)_{j\in\N}$  is motivated by the  property
that, for every choice of $j\in\N$, $l\in \N_0$ and $\upsilon\in\{-1,1\}$,
the function
$
t\mapsto \sqrt[2^l]{\max\left\{\upsilon \psi_{j}\left(t \right),0\right\}}
$
is smooth.
In particular, by backward recursion, each $\lambda_a^{\eps_1,\dots,\eps_{i}}$ is smooth and is
the product of
functions of the type $\sqrt[2^l]{\frac{1}{\eps_{h}}\max\left\{\upsilon \psi_{j}\left(\frac{\cdot}{\eps_{h}} \right),0\right\}}$
and of
$\sqrt[2^m]{\lambda_b}$ for some $m\in \N_0$ and some $b\in\{1,\dots,N_l\}$.

An important consequence of this factorization, which will be exploited in the proof of Theorem~\ref{thm-1-parameter}, is that
 the 
 derivatives of $\sqrt{\lambda_a^{\eps_1,\dots,\eps_{i}}}$ with respect to time can be bounded by a finite constant depending explicitly on $\eps_1,\dots,\eps_{l-1}$.

The smoothness of $\lambda_a^{\eps_1,\dots,\eps_{i}}$, moreover,
allows us to consider $\lambda_a^{\eps_1,\dots,\eps_{i}} Z_a^{l-i}$
as a smooth vector field on the extended manifold $\R\times Q$ and,
similarly, $\lambda_a^{\eps_1,\dots,\eps_{l-i}} (Z_a^i)^V$ as a
smooth vector field on the extended manifold $\R\times TQ$.

Summing up,  the trajectories of the
$l$-parameter family
$\xi_0^{\eps_1,\dots,\eps_l}=\xi^{\eps_1,\dots,\eps_l}$ are driven
by a $l$-parameter family of control laws
$u^{\eps_1,\dots,\eps_l}\in \mathcal{C}^\infty(I,\mathbb{R}^k)$ depending
smoothly on $\eps_1,\dots,\eps_{l}$.
 The
construction of Theorem~\ref{GeneralTrack} can be summarized as
follows: given $\eps>0$, if
\begin{equation}\label{to-be-quantified}
0<\eps_l\ll \eps_{l-1}\ll\cdots \ll \eps_1\ll 1
\end{equation}
then ${\rm d}(\xi^{\eps_1,\dots,\eps_l}(t),\gref(t))<\eps$ for every
$t\in I$. Our aim is here to quantify the relations in
(\ref{to-be-quantified}).
More precisely, we introduce $l-1$ functions
$\eta_2,\dots,\eta_l:(0,+\infty)\rightarrow(0,+\infty)$ and we look
for asymptotic conditions on their convergence to zero at zero such
that
\begin{equation}\label{oneP}
\lim_{\eps\rightarrow0}{\rm d}(\xi^{\eps,\eta_2(\eps),\eta_3\circ
\eta_2(\eps),\dots,\eta_l\circ \cdots \circ
\eta_{2}(\eps)}(t),\gref(t))=0
\end{equation}
uniformly with respect to $t\in I$. Let $\hat \eta_i=
\eta_{i}\circ\cdots\circ\eta_2$ for $i=2,\dots,l$ and define
$\hat\eta_1$ as the identity on $(0,+\infty)$. We say that
$\eps\mapsto u^{\hat\eta_1(\eps),\dots,\hat\eta_l(\eps)}$ is a {\it
one\textendash{}parameter tracking control law for $\gref$} if (\ref{oneP})
holds true.

\begin{thm}\label{thm-1-parameter}
Let $\Sigma=(Q,\nabla,Y,\mathscr{Y},\mathbb{R}^k)$ be a FACCS. Let
$\HK_i$, $i\in \mathbb{N}_0$, be defined as in (\ref{setH}). Fix a
reference trajectory $\gref\in \mathcal{C}^\infty(I,Q)$ and assume
that there exists $l\in\mathbb{N}$ such that
\[\nabla_{\dot{\gamma}_{\mathrm{ref}}(t)}\dot{\gamma}_{\mathrm{ref}}(t)-Y(t,\dot{\gamma}_{\mathrm{ref}}(t))\in \HK_l(\gref(t))\]
for every $t\in I$. Construct $\xi^{\eps_1,\dots,\eps_l}$,
$u^{\eps_1,\dots,\eps_l}$ and $\hat\eta_i$ as above. If
$\eta_i:(0,+\infty)\rightarrow(0,+\infty)$ satisfies
$\limsup_{\eps\rightarrow0}\eta_i(\eps)/\eps^3<\infty$ for every
$i=2,\dots,l$, then $\eps\mapsto
u^{\hat\eta_1(\eps),\dots,\hat\eta_l(\eps)}$ is a
one\textendash{}parameter tracking control law for $\gref$.
\end{thm}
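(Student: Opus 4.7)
The proof is a quantitative version of the backward induction used in Theorem~\ref{GeneralTrack}: starting from $\xi_l=\gref$ and working down the cascade $\xi_l,\xi_{l-1}^{\eps_1},\dots,\xi_0^{\eps_1,\dots,\eps_l}=\xi^{\eps_1,\dots,\eps_l}$, one estimates at each step $\xi_i\to\xi_{i-1}$ the distance between the new and the old curve as a function of $\eps_{l-i+1}$ and of the previously chosen parameters, and then sums the $l$ contributions by the triangle inequality.

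The first ingredient is a quantitative refinement of Proposition~\ref{Averaging}: applying the variation formula~(\ref{variation_formula}) to~(\ref{extras}) in order to factor the slow flow out, and estimating the residual oscillatory time-ordered exponential via Lemma~\ref{simple_fact}, one obtains
\[\mathrm{d}(\xi_{i-1}^{\eps_1,\dots,\eps_{l-i+1}}(t),\xi_i^{\eps_1,\dots,\eps_{l-i}}(t))\le K_i\,\eps_{l-i+1}\bigl(1+\|w\|_\infty+\|\dot w\|_\infty\bigr)\]
uniformly in $t\in I$, where $w$ collects the coefficients $\sqrt{\lambda_a^{\eps_1,\dots,\eps_{l-i}}\alpha_{a,b}^{i-1}}$ appearing in~(\ref{extras}) and $K_i$ depends only on the (fixed) vector fields and on $I$. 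The second ingredient is an inductive estimate of the quantity $A_m$, defined as the maximum, over all indices of the $\lambda$'s at level $m$, of $\|\lambda_a^{\eps_1,\dots,\eps_m}\|_\infty+\|\partial_t\lambda_a^{\eps_1,\dots,\eps_m}\|_\infty$. The recursive factorization of $\lambda_a^{\eps_1,\dots,\eps_m}$ described right after~(\ref{extras}), combined with Leibniz's rule and with the smoothness of every $\sqrt[2^n]{\max\{\pm\psi_j,0\}}$ enforced by the specific choice of the sequence $(\psi_j)_{j\in\mathbb{N}}$, yields a recursion of the form $A_{m+1}\le C\,\eps_{m+1}^{-2}\sqrt{A_m}$ with $A_0=O(1)$; its explicit solution along the one-parameter path $\eps_{j+1}\le C\eps_j^3$ gives $A_m=O(\eps_m^{-12/5})$.

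Combining the two ingredients and using $\eps_j\le C\eps^{3^{j-1}}$ (which follows from $\limsup_{\eps\to 0}\eta_i(\eps)/\eps^3<\infty$ by a trivial induction on $j$), the contribution to $\mathrm{d}(\xi^{\eps_1,\dots,\eps_l}(t),\gref(t))$ coming from the $i$-th step of the cascade is $O(\eps_{l-i+1}A_{l-i})=O(\eps_{l-i}^{3-12/5})=O(\eps_{l-i}^{3/5})$, a strictly positive power of $\eps$. Summing the $l$ contributions yields $\mathrm{d}(\xi^{\eps_1,\dots,\eps_l}(t),\gref(t))\to 0$ as $\eps\to 0$, uniformly in $t\in I$, which is the claim. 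The main obstacle is the first ingredient: the convergence result quoted in Proposition~\ref{Averaging} from \cite{2005BulloAndrewBook} is only qualitative, so one must reprove it with an explicit rate by substituting Lemma~\ref{simple_fact} for the non-quantitative Lemma~\ref{ConvIntegral} inside the variation-formula expansion~(\ref{special_variation_formula}); the exponent $3$ in the hypothesis is chosen precisely so that it lies above the threshold $\rho>5/2$ determined by the fixed-point equation $\alpha=4\rho/(2\rho-1)$, which is the minimum rate at which the geometric shrinking $\eps_{m+1}\sim\eps_m^\rho$ strictly beats the geometric growth of $A_m$.
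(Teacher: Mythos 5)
The proposal takes a genuinely different route from the paper, but the central quantitative step is not correct, and the gap is not a cosmetic one.

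Your plan is a step-by-step cascade estimate: bound $\mathrm{d}(\xi_{i-1},\xi_i)$ at each level and then sum $l$ contributions by the triangle inequality. The paper instead unwinds \emph{all} $l$ levels of the time-ordered exponential at once, using (\ref{variation_formula}) and (\ref{nilpotent_exponential}) iteratively, so that a single application of Lemma~\ref{ConvIntegral} finishes the proof. This distinction matters, because in the step-by-step version the ``constant'' $K_i$ in your inequality $\mathrm{d}(\xi_{i-1},\xi_i)\le K_i\,\eps_{l-i+1}(1+\|w\|_\infty+\|\dot w\|_\infty)$ cannot depend only on the fixed vector fields and on $I$, as you assert: the right-hand side of~(\ref{extras}) also carries the slow part $\lambda_a^{\eps_1,\dots,\eps_{l-i}}F_a^{i-1}$, whose coefficients already blow up as $\eps_1,\dots,\eps_{l-i}\to 0$. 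Moreover, the residual produced by Lemma~\ref{simple_fact}, once it hits a term that already contains $\dot w$, contributes $O(\hat\eps\|\partial_2 f\|_\infty)$ with $\partial_2 f\sim \ddot w$; this is exactly the $\zeta_2$ term in the paper, and it scales like $\eps_i\,\eps_{i-1}^{-5/2}$ (up to the lower-order products), not like $\eps_{l-i+1}(1+\|w\|_\infty+\|\dot w\|_\infty)$. So your per-step estimate misses the dominant contribution.

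The second gap is the recursion $A_{m+1}\le C\,\eps_{m+1}^{-2}\sqrt{A_m}$ for $A_m=\|\lambda^{\eps_1,\dots,\eps_m}\|_\infty+\|\partial_t\lambda^{\eps_1,\dots,\eps_m}\|_\infty$. The new level-$(m+1)$ coefficient has the form $\frac{1}{\eps_{m+1}}\max\{\pm\psi,0\}\sqrt{\lambda^{\eps_1,\dots,\eps_m}\alpha}$, so bounding $\|\dot\lambda^{\eps_1,\dots,\eps_{m+1}}\|_\infty$ requires a bound on $\bigl\|\frac{\mathrm{d}}{\mathrm{d}t}\sqrt{\lambda^{\eps_1,\dots,\eps_m}}\bigr\|_\infty$, and this quantity is \emph{not} controlled by $A_m$ alone: near a zero of $\lambda$ of order two, $\lambda$ and $\dot\lambda$ vanish while $\frac{\mathrm{d}}{\mathrm{d}t}\sqrt{\lambda}$ does not. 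The paper sidesteps this by never abstracting away the structure: it exploits the explicit factorization of $\lambda^{\eps_1,\dots,\eps_m}$ into products of $\sqrt[2^l]{\frac{1}{\eps_h}\max\{\upsilon\psi_j(\cdot/\eps_h),0\}}$, obtaining directly
$\bigl\|\frac{\mathrm{d}^m}{\mathrm{d}t^m}\sqrt{\lambda_a^{\eps_1,\dots,\eps_i}}\bigr\|_\infty\le C/(\sqrt[2^i]{\eps_1}\cdots\sqrt[4]{\eps_{i-1}}\,\eps_i^{m+1/2})$. This is why the threshold exponent in the paper's sharp version (Remark~\ref{H_quantification}) is $\tfrac52+a$ with $a=\tfrac{\sqrt{5}}{2}-1$, which does not coincide with the fixed point $12/5$ of your abstract recursion. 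To make a cascade argument like yours work you would have to (i) replace $A_m$ with a functional that tracks enough derivative information of $\sqrt{\lambda}$, and (ii) let $K_i$ absorb the slow drift's $\eps$-dependence; at that point you would essentially be reconstructing the paper's simultaneous expansion of the chronological exponential, which is the device that makes the bookkeeping tractable.
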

\begin{proof}
The first step of the proof consists in estimating the order
with respect to $\eps$ of the $L^\infty$-norm of the time-dependent
parameters appearing in  the parameterization (\ref{e-para}).
We write $\eps_i$ for
$\hat\eta_i(\eps)$.
Denoting by $C$ any constant not depending on the $\eps_j$,
it is
easy to check by backward induction on $i=0,\dots,l$ that
$$\|\lambda_a^{\eps_1,\dots,\eps_i}\|_{\infty}\leq  \frac C{\sqrt[2^{i-1}]{\eps_1}\sqrt[2^{i-2}]{\eps_2}\cdots \sqrt{\eps_{i-1}}{\eps_i}}.$$
Exploiting the factorization of $\lambda_a^{\eps_1,\dots,\eps_i}$
described above we get, in addition,
$$
 \left\|\frac{{\rm d}^m}{{\rm d}t^m} \sqrt{\lambda_a^{\eps_1,\dots,\eps_i}}\right\|_{\infty}\leq  \frac C{\sqrt[2^{i}]{\eps_1}\sqrt[2^{i-1}]{\eps_2}\cdots \sqrt[4]{\eps_{i-1}}{\eps_i}^{m+\frac12}}
$$
for 
$m\in\mathbb{N}_0$.

Consider the extended system on $\mathbb{R}\times TQ$ associated
with (\ref{first}) where the time is the new variable. In the
following computations we write, using the notation introduced in
(\ref{extras}),
\begin{eqnarray*}
X^e&=&(1,Z+Y^V),\\
\Phi_a^i&=&(0,(F_a^i)^V),\\
\Gamma_{a,b}^i&=&  (0,(G_{a,b}^i)^V),
\end{eqnarray*}
all seen as vector fields on $\mathbb{R}\times TQ$.
We also
define ${\gamma}_{\mathrm{ref}}^e(0)=(0,{\dot\gamma}_{\mathrm{ref}}(0))$
and, given a smooth function $\lambda:I\to \R$, we write  $\lambda^V$ to denote a smooth function on $\mathbb{R}\times TQ$ such that $\lambda^V(t,v)=\lambda(t)$ for every $t\in I$ and every $v\in TQ$.
In particular, we define
\begin{eqnarray*}
\theta_{a,b}^{i}&=&\left(
{\sqrt{
\lambda_{a}^{\eps_1,\dots,\eps_{i}}\alpha^{l-i-1}_{a,b}}}\right)
^V,
\\
\dot{\theta}_{a,b}^{i}&=&\left(\frac{{\rm d}}{{\rm d}t}
{\sqrt{
\lambda_{a}^{\eps_1,\dots,\eps_{i}}\alpha^{l-i-1}_{a,b}}}\right)
^V.
\end{eqnarray*}

Then, applying (\ref{nilpotent_exponential})
 and (\ref{variation_formula}),
\[\begin{array}{l} (t,\dot{\xi}_0^{\eps_1,\ldots,\eps_l}(t))=\\
\overrightarrow{{\rm exp}}\int^t_0
(X^e +\sum_{a=1}^{N_0}\lambda_a^{\eps_1,\dots,\eps_l}(s)
(0,(Z_a^0)^V) ){\rm
d}s\;{\gamma}_{\mathrm{ref}}^e(0)\\ =
\overrightarrow{{\rm exp}}\int^t_0
\Big(X^e + \sum_{a=1}^{N_1}\lambda_a^{\eps_1,\dots,\eps_{l-1}}(s)
\Phi_a^0
\\+\frac{1}{\eps_l}
\sum_{a=1}^{N_1}
\sum_{b=1}^{\hat
N_a^1}\psi_{\lo(a,b)}(\frac{s}{\eps_{l}})\sqrt{
\lambda_{a}^{\eps_1,\dots,\eps_{l-1}}(s)\alpha^{0}_{a,b}}
\\
\Gamma_{a,b}^{0}\Big){\rm
d}s\;{\gamma}_{\mathrm{ref}}^e(0)\end{array}\]
\[\begin{array}{l}
=\overrightarrow{{\rm exp}}\int^t_0\frac{1}{\eps_l}\sum_{a=1}^{N_1}
\sum_{b=1}^{\hat N_a^1}\psi_{\lo(a,b)}(\frac{s}{\eps_{l}})
\theta_{a,b}^{l-1}\Gamma_{a,b}^{0}{\rm d}s
\,\circ\\
\overrightarrow{{\rm exp}}\int^t_0 \Big(X^e+\sum_{a=1}^{N_1}\lambda_a^{\eps_1,\dots,\eps_{l-1}}(s)
\Phi_a^0\\
+
\sum_{a=1}^{N_1}
\sum_{b=1}^{\hat
N_a^1}
\int^s_0 \frac{1}{\eps_l}
\psi_{\lo(a,b)}(\frac{s_0}{\eps_{l}})
{\rm
d}s_0(
\theta_{a,b}^{l-1}
[\Gamma_{a,b}^{0},X^e]\\
- \dot{\theta}_{a,b}^{l-1}
\Gamma_{a,b}^{0})
- \sum_{a,a'=1}^{N_1}
\sum_{b=1}^{\hat
N_a^1}\sum_{b'=1}^{\hat
N_{a'}^1}\int^s_0\frac{1}{\eps_l}
\psi_{\lo(a,b)}(\frac{s_1}{\eps_{l}})
\\ \int^{s_1}_0\frac{1}{\eps_l}\psi_{\lo(a',b')}(\frac{s_0}{\eps_{l}})
{\rm d}s_0{\rm
d}s_1
\theta_{a,b}^{l-1}\theta_{a',b'}^{l-1}\\
(0,\langle G_{a,b}^{0} \colon G_{a',b'}^{0} \rangle^V)\Big) {\rm d}s
 {\gamma}_{\mathrm{ref}}^e(0).
 \end{array}\]
 From now
on, let us
denote by
$\mathscr{V}$ any {\it vertical flow}, i.e., any flow on $\R\times TQ$
preserving the base point on $Q$.

 Notice that, by construction,
 \[\begin{array}{l}
 \sum_{a=1}^{N_1}\lambda_a^{\eps_1,\dots,\eps_{l-1}}(s)
\Phi_a^0+\\
-\sum_{a,a'=1}^{N_1}
\sum_{b=1}^{\hat
N_a^1}\sum_{b'=1}^{\hat
N_{a'}^1}\Lambda_T(\psi_{\lo(a,b)},\psi_{\lo(a',b')})\\[1mm]
 \sqrt{
\lambda_{a}^{\eps_1,\dots,\eps_{l-1}}(s)
\lambda_{a}^{\eps_1,\dots,\eps_{l-1}}(s)\alpha^{0}_{a,b}\alpha^{0}_{a',b'}}\\(0,\langle G_{a,b}^{0} \colon G_{a',b'}^{0} \rangle^V)=
\sum_{a=1}^{N_1}\lambda_a^{\eps_1,\dots,\eps_l}(s)
(0,(Z_a^1)^V).
  \end{array}\]
Then,
\[\begin{array}{l} (t,\dot{\xi}_0^{\eps_1,\ldots,\eps_l}(t))=\\  {\mathscr{V}}\circ \;
\overrightarrow{{\rm exp}}\int^t_0
\Big(X^e+\sum_{a=1}^{N_1}\lambda_a^{\eps_1,\dots,\eps_{l-1}}(s)
(0,(Z_a^1)^V)\\
+
\sum_{a=1}^{N_1}
\sum_{b=1}^{\hat
N_a^1}
\int^s_0 \frac{1}{\eps_l}
\psi_{\lo(a,b)}(\frac{s_0}{\eps_{l}})
{\rm
d}s_0(
\theta_{a,b}^{l-1}
[\Gamma_{a,b}^{0},X^e]\\
- \dot{\theta}_{a,b}^{l-1} \Gamma_{a,b}^{0}) \\ +
\sum_{a,a'=1}^{N_1} \sum_{b=1}^{\hat N_a^1}\sum_{b'=1}^{\hat
N_{a'}^1} (\Lambda_T(\psi_{\lo(a,b)},\psi_{\lo(a',b')})\\-
\int^s_0\frac{1}{\eps_l} \psi_{\lo(a,b)}(\frac{s_1}{\eps_{l}})
 \int^{s_1}_0\frac{1}{\eps_l}\psi_{\lo(a',b')}(\frac{s_0}{\eps_{l}})
{\rm d}s_0{\rm
d}s_1)\\(0,\langle G_{a,b}^{0} \colon G_{a',b'}^{0} \rangle^V) \theta_{a,b}^{l-1}\theta_{a',b'}^{l-1}\Big){\rm
d}s
{\gamma}_{\mathrm{ref}}^e(0).
\end{array}\]

Applying iteratively the same computation as above, one ends up with
\[\begin{array}{l} (t,\dot{\xi}_0^{\eps_1,\ldots,\eps_l}(t))=\\
 \mathscr{V}\,\circ  \overrightarrow{{\rm exp}}\int^t_0
\left(X^e+\lambda_a(s) (0,(Z_a^l)^V)+  \mathscr{T}(s)\right){\rm
d}s\;{\gamma}^e_{\mathrm{ref}}(0),
\end{array}\]
where $\mathscr{T}(s)$ is a sum of terms of the form $\zeta(s)V$ where
$V$ is a vector field on $\R\times TQ$ independent of the $\eps_j$,
while $\zeta:I\to \R$ is smooth, depends on the $\eps_j$ and is of one
of the following four types:
\[\begin{array}{l}
\zeta_1(s)=
\int^s_0 \frac{1}{\eps_i}
\psi_{\lo(a,b)}(\frac{s_0}{\eps_{i}})
{\rm
d}s_0\,
\sqrt{\lambda_a^{\eps_1,\dots,\eps_{i-1}}(s)\alpha_{a,b}^{l-i}},\\ 
\zeta_2(s)= -\int^s_0 \frac{1}{\eps_i}
\psi_{\lo(a,b)}(\frac{s_0}{\eps_{i}}) {\rm d}s_0 
\frac{{\rm
d}}{{\rm
d}s}\sqrt{\lambda_a^{\eps_1,\dots,\eps_{i-1}}(s)\alpha_{a,b}^{l-i}}
,\\ 
\zeta_3(s)=\frac 1{1+\delta_{bc}}\Big[2 (\Lambda_T(\psi_{\lo(a,b)},\psi_{\lo(a',b')}))\\-
\left(\int^s_0\frac{1}{\eps_i}
\psi_{\lo(a,b)}(\frac{s_0}{\eps_{i}}){\rm d}s_0\right) \left(
 \int^{s}_0\frac{1}{\eps_i}\psi_{\lo(a',b')}(\frac{s_0}{\eps_{i}})
{\rm d}s_0\right)\Big]\\
\sqrt{\lambda_a^{\eps_1,\dots,\eps_{i-1}}(s)\lambda_{a'}^{\eps_1,\dots,\eps_{i-1}}(s)\alpha_{a,b}^{l-i}\alpha_{a',b'}^{l-i}}
\end{array}\]
for
$i=1,\ldots, l$, or
\[\begin{array}{l}
\zeta_4(s)= -\left(\int^s_0\frac{1}{\eps_j}
\psi_{\lo(a,b)}(\frac{s_0}{\eps_{j}}){\rm d}s_0\right)\\
\left(\int^s_0\frac{1}{\eps_i}
\psi_{\lo(a',b')}(\frac{s_0}{\eps_{i}}){\rm d}s_0\right)\\
\sqrt{\lambda_a^{\eps_1,\dots,\eps_{j-1}}(s)\lambda_{a'}^{\eps_1,\dots,\eps_{i-1}}(s)\alpha_{a,b}^{l-j}\alpha_{a',b'}^{l-i}}
 \end{array}\]
 for $i>j$, $i,j\in\{1,\ldots, l\}$.

According to Lemma~\ref{ConvIntegral}, the theorem is proved if we show that, for every $\zeta$ of one of the four types introduced above,
$\int_0^t\zeta(s){\rm d}s$ converges to zero uniformly with respect to
$t\in I$ as $\eps$ goes to zero.

This can be done by applying
Lemma~\ref{simple_fact}.
 Taking for instance
 $$f(\tau,s)=
 \int^\tau_0 \psi_{\lo(a,b)}(s_0)
{\rm
d}s_0\,
\sqrt{\lambda_a^{\eps_1,\dots,\eps_{i-1}}(s)\alpha_{a,b}^{l-i}},
 $$
and $\hat\eps=\eps_i$
 leads to
 $$\int_0^t \zeta_1(s) {\rm
d}s\leq C
\frac{\eps_i }{\sqrt[2^{i-1}]{\eps_1}\sqrt[2^{i-2}]{\eps_2}\cdots \sqrt[4]{\eps_{i-2}}\eps_{i-1}^{\frac32}}.$$
Similarly,
$$\int_0^t \zeta_2(s) {\rm
d}s \leq C  \frac{\eps_i }{\sqrt[2^{i-1}]{\eps_1}\sqrt[2^{i-2}]{\eps_2}\cdots \sqrt[4]{\eps_{i-2}}\eps_{i-1}^{\frac52}}.$$
Taking
\[\begin{array}{rl}
f(\tau,s)=&\left(\int^\tau_0 \psi_{\lo(a,b)}(s_0){\rm d}s_0\right)
\left(
 \int^{\tau}_0\psi_{\lo(a',b')}(s_0)
{\rm d}s_0\right)\\[1mm]
&\sqrt{\lambda_a^{\eps_1,\dots,\eps_{i-1}}(s)\lambda_{a'}^{\eps_1,\dots,\eps_{i-1}}(s)\alpha_{a,b}^{l-i}\alpha_{a',b'}^{l-i}}
\end{array}\]
we obtain
 $$\int_0^t \zeta_3(s) {\rm
d}s\leq C
\frac{\eps_i }{\sqrt[2^{i-2}]{\eps_1}\sqrt[2^{i-3}]{\eps_2}\cdots \sqrt{\eps_{i-2}}\eps_{i-1}^{2}}.$$
Finally, with
\[\begin{array}{l}
f(\tau,s)=
\left(\int^{s/\eps_j}_0
\psi_{\lo(a,b)}(s_0){\rm d}s_0\right)
\left(\int^\tau_0
\psi_{\lo(a',b')}(s_0){\rm d}s_0\right)\\
\ \ \ \ \ \ \ \ \ \ \ \ \sqrt{\lambda_a^{\eps_1,\dots,\eps_{j-1}}(s)\lambda_{a'}^{\eps_1,\dots,\eps_{i-1}}(s)\alpha_{a,b}^{l-j}\alpha_{a',b'}^{l-i}}
\end{array}\]
we get
\[\begin{array}{l}
\int_0^t \zeta_4(s) {\rm
d}s\leq C
\frac{\eps_i }{
\sqrt[2^{j-1}]{\eps_1}\sqrt[2^{j-2}]{\eps_2}\cdots \sqrt[4]{\eps_{j-1}}\sqrt{\eps_{j-1}}}\\
\ \ \ \ \ \ \ \ \ \ \ \  \ \ \ \ \ \ \ \ \frac{1}{
\sqrt[2^{i-1}]{\eps_1}\sqrt[2^{i-2}]{\eps_2}\cdots \sqrt[4]{\eps_{i-2}}\eps_{i-1}^{\frac 32}}.
\end{array}\]
Notice that the upper bound for $\int_0^t \zeta_2(s) {\rm
d}s$ is the one growing faster as $\eps$ goes to zero. Indeed, in order to compare
it with the one for $\int_0^t \zeta_3(s) {\rm
d}s$ it suffices to notice that
\[\begin{array}{l}
\frac{\sqrt[2^{i-1}]{\eps_1}\sqrt[2^{i-2}]{\eps_2}\cdots \sqrt[4]{\eps_{i-2}}\eps_{i-1}^{\frac52}}{\sqrt[2^{i-2}]{\eps_1}\sqrt[2^{i-3}]{\eps_2}\cdots \sqrt{\eps_{i-2}}\eps_{i-1}^{2}}=\\[5mm]
\sqrt{\frac{\eps_{i-1}}{\eps_{i-2}}}\sqrt[4]{\frac{\eps_{i-2}}{\eps_{i-3}}}\cdots\sqrt[2^{i-2}]{\frac{\eps_{2}}{\eps_{1}}}\sqrt[2^{i-1}]{\eps_1}
\end{array}\]
converges to zero as $\eps$ goes to zero.

Hence, for each
$\zeta$ as above, there exists $i=1,\dots,l$ such that
$$
 \int_0^t\zeta(s) {\rm
d}s\leq C\frac{\eps_i }{\sqrt[2^{i-1}]{\eps_1}\sqrt[2^{i-2}]{\eps_2}\cdots \sqrt[4]{\eps_{i-2}}\eps_{i-1}^{\frac 52}}
$$
and we are left to notice that
\[\begin{array}{l}
\frac{\eps_i }{\sqrt[2^{i-1}]{\eps_1}\sqrt[2^{i-2}]{\eps_2}\cdots \sqrt[4]{\eps_{i-2}}\eps_{i-1}^{\frac 52}}=\\[5mm]
\frac{\eps_i}{\eps_{i-1}^3}\sqrt{\frac{\eps_{i-1}}{\eps_{i-2}}}\sqrt[4]{\frac{\eps_{i-2}}{\eps_{i-3}}}\cdots\sqrt[2^{i-2}]{\frac{\eps_{2}}{\eps_{1}}}\sqrt[2^{i-1}]{\eps_1}
\end{array}\]
 tends to zero as $\eps$ goes to zero. \end{proof}

\begin{remark}\label{H_quantification}
The hypothesis
$\limsup_{\eps\rightarrow0}\eta_i(\eps)/\eps^3<\infty$
has been chosen, in the statement of Theorem~\ref{thm-1-parameter},
because of its simplicity.
However, we can weaken it by requiring that
$\limsup_{\eps\rightarrow0}\eta_i(\eps)/\eps^{\frac52+a}<\infty$
with
$a=\frac{\sqrt{5}}{2}-1\simeq 0.12$. Indeed, with this choice of
$a$,
\[\begin{array}{l}
\frac{\eps_j^a }{\sqrt[2^{j}]{\eps_1}\sqrt[2^{j-1}]{\eps_2}\cdots \sqrt[4]{\eps_{j-1}}}
=\left(
\frac{\eps_j}{\eps_{j-1}^{\frac52+a}}
\right)^a\\
\left(
\frac{\eps_{j-1}^a}{\sqrt[2^{j-1}]{\eps_1}\sqrt[2^{j-2}]{\eps_2}\cdots \sqrt[4]{\eps_{j-2}}}
\right)^{\frac12},
\end{array}\]
so that
\[\begin{array}{l}
\frac{\eps_i }{\sqrt[2^{i-1}]{\eps_1}\sqrt[2^{i-2}]{\eps_2}\cdots \sqrt[4]{\eps_{i-2}}\eps_{i-1}^{\frac 52}
}=\\
\frac{\eps_i}{\eps_{i-1}^{\frac52+a}}\left(
\frac{\eps_{i-1}}{\eps_{i-2}^{\frac52+a}}
\right)^a
\left(
\frac{\eps_{i-2}}{\eps_{i-3}^{\frac52+a}}
\right)^\frac{a}2\cdots \left(
\frac{\eps_2}{\eps_{1}^{\frac52+a}}
\right)^\frac{a}{2^{i-3}}
\eps_1^\frac{a}{2^{i-2}}.
\end{array}\]
Hence, each $ \int_0^t\zeta(s) {\rm d}s$ goes to zero uniformly with
respect to $t\in I$ as $\eps$ tends to zero.
\end{remark}

\subsection{Case $\mathscr{Z}$}\label{SCaseZ}

Analogously to Section \ref{SCaseH}, the aim is to provide an
algorithmic implementation of the results obtained in Section
\ref{SgeneralTrack} about the existence of controls yielding
tracking but this time under the hypotheses of
Corollary~\ref{CorolSym1}. Instead of adopting the algorithmic
scheme on which the proof of Theorem~\ref{GeneralTrack} is based, as
done in Section \ref{SCaseH}, we rely here on the iteration of
the scheme proposed in \cite{2005BulloAndrewBook} and recalled in
Theorem~\ref{ACCStrack} (see also Remark~\ref{betterT}). The advantage is that, under the more restrictive hypotheses of Corollary~\ref{CorolSym1}, we can base the iteration scheme
on the $\Lambda_T$-orthonormal and zero-mean sequence defined in (\ref{varphi}) using trigonometric functions, which is more convenient for numerical implementation than the sequence $(\psi_j)_{j\in\N}$ constructed in the previous section.

We start from a parameterization
\begin{equation}\label{Eq-First-Param-Case Z}\nabla_{\dot{\gamma}_{\mathrm{ref}}(t)}\dot{\gamma}_{\mathrm{ref}}(t)-Y(t,\dot{\gamma}_{\mathrm{ref}}(t))
=\sum_{a=1}^{N_l}\lambda_a(t) Z^l_a(\gref(t))\end{equation} of
$\gref\colon I \rightarrow Q$, with $Z_a^l\in \mathscr{Z}_l$  and $\lambda_a$ smooth on $I$ for every
$a=1,\dots,N_l$. (For the definition of $\mathscr{Z}_l$, see (\ref{setZ}).)
As in Section \ref{SCaseH}, we can construct
algorithmically a $l$-parameter family of admissible trajectories
$\xi^{\eps_1,\dots,\eps_l}$ of $\Sigma$ with $\eps_1,\dots,\eps_l>0$ such that
$\xi_l=\gref$,
the uniform
limit in (\ref{cascade-limits}) %
exists for every $i=1,\dots,l$ and every
$\eps_1,\dots,\eps_{l-i}>0$, and, instead of
(\ref{EqParamCaseH}), it satisfies
\begin{eqnarray*}\lefteqn{\nabla_{\dot{\xi}_{i}^{\eps_1,\dots,\eps_{l-i}}(t)}\dot{\xi}_{i}^{\eps_1,\dots,\eps_{l-i}}(t)
-Y(t,\dot{\xi}_{i}^{\eps_1,\dots,\eps_{l-i}}(t))}\\ && \in
\mathrm{span}_{\mathbb{R}}(\mathscr{Z}_{i}({\xi}_{i}^{\eps_1,\dots,\eps_{l-i}}(t))).\end{eqnarray*}

The algorithm works by applying, at each step, the construction of Theorem~\ref{ACCStrack}
with $\mathscr{Z}_i$ as $\mathscr{Y}$ (see Remark~\ref{betterT}). Then by backward recursion on
$i$, the solutions to 
\begin{equation*}\label{Eq-Stepi}\begin{array}{l}\nabla_{\dot{\xi}_{i}^{\eps_1,\dots,\eps_{l-i}}(t)}\dot{\xi}_{i}^{\eps_1,\dots,\eps_{l-i}}(t)
-Y(t,\dot{\xi}_{i}^{\eps_1,\dots,\eps_{l-i}}(t))\\=\displaystyle\sum_{a=1}^{N_i}
\lambda_a^{\eps_1,\dots,\eps_{l-i}}(t)Z_a^{i-1}({\xi}_{i}^{\eps_1,\dots,\eps_{l-i}}(t))
\\+ \displaystyle\sum_{b,c=1,b<c}^{N_i}
\lambda_{b,c}^{\eps_1,\dots,\eps_{l-i}}(t)\langle Z_b^{i-1}\colon
Z_c^{i-1}\rangle
({\xi}_{i}^{\eps_1,\dots,\eps_{l-i}}(t))\end{array}\end{equation*}
for $Z_a^{i-1},Z_b^{i-1},Z_c^{i-1}\in \mathscr{Z}_{i-1}$ are
trackable by solutions to
\begin{equation}\label{Eq-Stepi-1}\begin{array}{l}\nabla_{\dot{\xi}_{i-1}^{\eps_1,\dots,\eps_{l-i+1}}(t)}\dot{\xi}_{i-1}^{\eps_1,\dots,\eps_{l-i+1}}(t)
-Y(t,\dot{\xi}_{i}^{\eps_1,\dots,\eps_{l-i+1}}(t))\\=\displaystyle\sum_{a=1}^{N_{i-1}}\left(u_{{\rm
slow},a}^{\eps_1,\dots,\eps_{l-i}}(t
) \right.\\ \left.+ \frac{1}{\eps_{l-i+1}}u_{{\rm
osc},a}^{\eps_1,\dots,\eps_{l-i}}(t/\eps_{l-i+1},t)
\right)Z_a^{i-1}({\xi}_{i-1}^{\eps_1,\dots,\eps_{l-i+1}}(t))\end{array}\end{equation}
where $u_{{\rm slow},a}^{\eps_1,\dots,\eps_{l-i}}$ and $u_{{\rm
osc},a}^{\eps_1,\dots,\eps_{l-i}}$ are constructed as in
Theorem~\ref{ACCStrack}. Notice that
$$
u_{{\rm
slow},a}^{\eps_1,\dots,\eps_{l-i}}(t
) + \frac{1}{\eps_{l-i+1}}u_{{\rm
osc},a}^{\eps_1,\dots,\eps_{l-i}}(t/\eps_{l-i+1},t)
$$
plays the role of a $\lambda_{\hat
a}^{\eps_1,\dots,\eps_{l-i+1}}(t)$ or a $\lambda_{\hat b,\hat
c}^{\eps_1,\dots,\eps_{l-i+1}}(t)$ at the next step.

We
have \begin{small}$$\|u_{{\rm
slow},a}^{\eps_1,\dots,\eps_{l-i}}\|_\infty \leq C
\max_{b,c}(\|\lambda_b^{\eps_1,\dots,\eps_{l-i}}\|_\infty,\|\lambda_{b,c}^{\eps_1,\dots,\eps_{l-i}}\|_\infty^2),$$
$$ \|u_{{\rm
osc},a}^{\eps_1,\dots,\eps_{l-i}}\|_\infty \leq
C\max_{b,c}(\|\lambda_{b,c}^{\eps_1,\dots,\eps_{l-i}}\|_\infty).
 $$\end{small}

Given $\alpha, \beta\colon [0,T]\times I\to \mathbb{R}$, define
$\Lambda_T$ as
\[\begin{array}{l}\Lambda_T(t,\alpha,\beta)= \\ \quad \frac{1}{2T}\int_0^{T}\left(\int_0^\tau \alpha(s,t){\rm
d}s\right)\left(\int_0^\tau \beta(s,t) {\rm d}s\right){\rm d}\tau.
\end{array}\] 
The construction of $u_{{\rm slow}}$ and $u_{{\rm osc}}$ is such that
\begin{equation*}\label{Eq-Lambda} \begin{array}{l}\displaystyle\sum_{a=1}^{N_{i-1}}
u_{{\rm slow},a}^{\eps_1,\dots,\eps_{l-i}}(t)Z_a^{i-1}\\
-\sum_{b,c=1}^{N_{i-1}}
\Lambda_T(t,u_{{\rm osc},b}^{\eps_1,\dots,\eps_{l-i}},u_{{\rm osc},c}^{\eps_1,\dots,\eps_{l-i}})\langle Z_b^{i-1}\colon Z_c^{i-1}\rangle\\
=\displaystyle\sum_{a=1}^{N_{i-1}}
\lambda_a^{\eps_1,\dots,\eps_{l-i}}(t)Z_a^{i-1}\\
+\displaystyle\sum_{b,c=1,b<c}^{N_{i-1}}
\lambda_{b,c}^{\eps_1,\dots,\eps_{l-i}}(t)\langle Z_b^{i-1}\colon
Z_c^{i-1}\rangle  \end{array}\end{equation*}
(see \cite{2005BulloAndrewBook}).
As in Section \ref{SCaseH}, our aim is here to quantify the
relations in (\ref{to-be-quantified}). We will use the same
notations for $\eta_i$ and $\hat{\eta}_i$ as in
Section~\ref{SCaseH}. The notion of being a
one\textendash{}parameter tracking control law for $\gref$ is again defined
through (\ref{oneP}).

\begin{thm}\label{thm-1-parameter-Z}
Let $\Sigma=(Q,\nabla,Y,\mathscr{Y},\mathbb{R}^k)$ be a FACCS. Let
$\mathscr{Z}_i$, $i\in \mathbb{N}_0$, be defined as in (\ref{setZ})
satisfying the same hypotheses as in Corollary \ref{CorolSym1}. 
 Fix
a reference trajectory $\gref\in \mathcal{C}^\infty(I,Q)$ and assume
that there exists $l\in\mathbb{N}$ such that
\[\nabla_{\dot{\gamma}_{\mathrm{ref}}(t)}\dot{\gamma}_{\mathrm{ref}}(t)-Y(t,\dot{\gamma}_{\mathrm{ref}}(t))\in \mathscr{Z}_l(\gref(t))\]
for every $t\in I$. Construct $\xi^{\eps_1,\dots,\eps_l}$,
$u^{\eps_1,\dots,\eps_l}$ and $\hat\eta_i$ as above. If
$\eta_i:(0,+\infty)\rightarrow(0,+\infty)$ satisfies
$\limsup_{\eps\rightarrow 0}\eta_i(\eps)/\eps^4<\infty$ for every
$i=2,\dots,l$, then $\eps\mapsto
u^{\hat\eta_1(\eps),\dots,\hat\eta_l(\eps)}$ is a
one\textendash{}parameter tracking control law for $\gref$.
\end{thm}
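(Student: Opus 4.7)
The plan is to mirror the proof of Theorem~\ref{thm-1-parameter}, using the one-step averaging construction of Theorem~\ref{ACCStrack} (as in Section~\ref{SCaseZ}) at each iteration instead of Proposition~\ref{Averaging}. First, I would establish uniform bounds on the amplitudes $\lambda_a^{\eps_1,\dots,\eps_i}$ and $\lambda_{b,c}^{\eps_1,\dots,\eps_i}$. Backward induction on $i$, combined with the two $L^\infty$ estimates on $u_{\mathrm{slow}}$ and $u_{\mathrm{osc}}$ recalled in the excerpt, yields
\[
\|\lambda^{\eps_1,\dots,\eps_i}\|_\infty\leq \frac{C}{\eps_1\eps_2\cdots\eps_i},
\]
under the assumption $\eps_{j+1}\leq C\eps_j^4$: in this regime the oscillatory contribution $\|u_{\mathrm{osc}}\|_\infty/\eps_{l-i+1}$ dominates the quadratic slow contribution $\|u_{\mathrm{slow}}\|_\infty\leq C\|\lambda\|_\infty^2$. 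Analogous bounds hold for the time derivatives, each temporal differentiation applied to a factor $\varphi_j(\cdot/\eps_{l-i+1})$ costing an extra power $1/\eps_{l-i+1}$.

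Next, I would expand $(t,\dot\xi^{\eps_1,\dots,\eps_l}(t))$ via iterated applications of the chronological variation formula (\ref{variation_formula}), peeling off one oscillation frequency at a time, as in the proof of Theorem~\ref{thm-1-parameter}. At each level the triple bracket identity (\ref{triple_bracket}), together with the averaging identity built into $u_{\mathrm{slow}}$, $u_{\mathrm{osc}}$ (the $\Lambda_T$-identity that precedes the statement of the theorem), cancels the leading-order term and leaves a residual $\mathscr{T}(s)$ which is a sum of products of amplitudes $\lambda^{\eps_1,\dots,\eps_j}$ or their derivatives, multiplied by integrals of $\varphi_j(\cdot/\eps_{l-j+1})$ and by differences between products of two such integrals and their $\Lambda_T$-mean. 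Lemma~\ref{simple_fact}, applied with $\hat\eps=\eps_{l-j+1}$, bounds the integral of each such residual by $C\eps_{l-j+1}$ times a suitable $L^\infty$-norm of amplitudes or of their first time derivative.

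Combining the two previous steps, the critical residual is of order
\[
\frac{\eps_i}{(\eps_1\eps_2\cdots\eps_{i-1})^3}=\frac{\eps_i}{\eps_{i-1}^4}\cdot\frac{\eps_{i-1}}{\eps_{i-2}^4}\cdots\frac{\eps_2}{\eps_1^4}\cdot\eps_1,
\]
which tends to zero as $\eps\to0$ precisely because the assumption $\limsup_{\eps\to0}\eta_i(\eps)/\eps^4<\infty$ keeps each factor bounded and forces the last factor to vanish. The uniform convergence to zero of all the residuals then transfers to the uniform convergence of the flows by Lemma~\ref{ConvIntegral}, which yields the tracking property (\ref{oneP}).

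The main obstacle will be the combinatorial bookkeeping of the iterated chronological expansion: unlike in the $\HK$ case, where the scheme produces square roots of amplitudes, here the quadratic term in $u_{\mathrm{slow}}$ multiplies amplitudes by themselves at each step, so both the $L^\infty$ estimates and the enumeration of residual types must be redone carefully. Identifying the critical residual (and checking that none of the other residuals is worse) is what determines the sharp exponent and forces the stronger hypothesis $\eta_i(\eps)\leq C\eps^4$ in place of the $\eps^3$ condition of Theorem~\ref{thm-1-parameter}.
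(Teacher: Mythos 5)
Your proposal takes essentially the same route as the paper: estimate the $L^\infty$-norms of the oscillatory amplitudes and their time derivatives by backward induction, then expand $(t,\dot\xi^{\eps_1,\dots,\eps_l}(t))$ by iterated applications of the variation formula (\ref{variation_formula}), identify the residual types, bound their time integrals via Lemma~\ref{simple_fact}, and conclude by Lemma~\ref{ConvIntegral}, with the $\eps^4$ requirement emerging from a telescoping factorization of the dominant residual. The only discrepancy is quantitative: the paper's sharpest residual bound is $C\eps_i(\eps_1\cdots\eps_{i-2})^{-2}\eps_{i-1}^{-3}$ (coming from the $\zeta_3$-type term, a product of two amplitudes times one time derivative), whereas you state $C\eps_i(\eps_1\cdots\eps_{i-1})^{-3}$; your estimate is strictly looser, so it still yields the conclusion, and in fact your cleaner telescoping identity $\eps_i(\eps_1\cdots\eps_{i-1})^{-3}=\prod_{j=1}^{i-1}(\eps_{j+1}/\eps_j^4)\cdot\eps_1$ is the same algebraic mechanism the paper uses after regrouping. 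You also make explicit — which the paper leaves implicit — that the $\eps^4$ scale is needed already at the $L^\infty$-estimation stage to ensure the oscillatory part dominates the quadratic slow part in the recursion; this is a useful clarification. The bookkeeping of residual types that you flag as the main obstacle is exactly what the paper carries out (four $\zeta$-types), and your proposal correctly anticipates both its structure and its outcome.
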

\begin{proof} The first step of the proof consists in estimating the order
with respect to $\eps$ of the ${\rm L}^{\infty}$\textendash{}norm of
the time\textendash{}dependent parameters appearing in
the parameterization (\ref{Eq-Stepi-1}). Once more we write $\eps_i$
for $\hat\eta_i(\eps)$. By induction it can be proved
that
\begin{eqnarray*}
 \|\partial_2^m u_{{\rm osc},a}^{\eps_1,\ldots,\eps_{i}}\|_{\infty}&\leq& C
(\eps_1\cdots\eps_{i-1}\eps_i^{m+1})^{-1}
\end{eqnarray*}
for every $m\in \mathbb{N}_0$.

Consider, as in the proof of Theorem~\ref{thm-1-parameter}, the extended system on $\mathbb{R}\times TQ$ associated with
(\ref{first}). In the following
computations we write
\begin{eqnarray*}
V^i_a&=&(0,(Z_a^i)^V), \\
W^i_{b,c}&=&(0,\langle Z^i_b\colon Z^i_c\rangle^V).
\end{eqnarray*}
We also write
$u_{{\rm
osc},a}^{\eps_1,\dots,\eps_{i}}(\sigma)$ to denote
$(u_{{\rm
osc},a}^{\eps_1,\dots,\eps_{i}}(\sigma,\cdot))^V$, i.e., $u_{{\rm
osc},a}^{\eps_1,\dots,\eps_{i}}(\sigma)$ is the smooth function on
$\R\times TQ$ such that
$$u_{{\rm
osc},a}^{\eps_1,\dots,\eps_{i}}(\sigma)(t,v)=
u_{{\rm
osc},a}^{\eps_1,\dots,\eps_{i}}(\sigma,t).$$
Then, applying
(\ref{nilpotent_exponential}) and (\ref{variation_formula}),
\[\begin{array}{l} (t,\dot{\xi}_0^{\eps_1,\ldots,\eps_l}(t))=
\overrightarrow{{\rm exp}}\int^t_0 \Big(X^e
\\ +\sum_{a=1}^{N_0}u_{{\rm slow},a}^{\eps_1,\dots,\eps_{l-1}}(s)V^0_a
\\+\frac{1}{\eps_l}\sum_{a=1}^{N_0}u_{{\rm
osc},a}^{\eps_1,\dots,\eps_{l-1}}(s/\eps_l,s)V^0_a \Big){\rm
d}s\;{\gamma}_{\mathrm{ref}}^e(0)\\ =\overrightarrow{{\rm
exp}}\int^t_0\frac{1}{\eps_l}\sum_{a=1}^{N_0}u_{{\rm
osc},a}^{\eps_1,\dots,\eps_{l-1}}(s/\eps_l)V^0_a {\rm d}s\,\circ
\\
\overrightarrow{{\rm exp}}\int^t_0 \Big(X^e+\sum_{a=1}^{N_0}u_{{\rm
slow},a}^{\eps_1,\dots,\eps_{l-1}}(s)V^0_a  \\
+ \sum_{a=1}^{N_0} \int^s_0 \frac{1}{\eps_l}u_{{\rm
osc},a}^{\eps_1,\dots,\eps_{l-1}}(s_0/\eps_l){\rm
d}s_0[V^0_a,X^e] \\
-\sum_{a=1}^{N_0} \int^s_0 \frac{1}{\eps_l}\partial_2 u_{{\rm
osc},a}^{\eps_1,\dots,\eps_{l-1}}(s_0/\eps_l){\rm
d}s_0 V^0_a \\
- \sum_{b,c=1}^{N_0}\int^s_0\frac{1}{\eps_l}u_{{\rm
osc},b}^{\eps_1,\dots,\eps_{l-1}}(s_1/\eps_l)
\\ \int^{s_1}_0\frac{1}{\eps_l}u_{{\rm
osc},c}^{\eps_1,\dots,\eps_{l-1}}(s_0/\eps_l){\rm d}s_0{\rm
d}s_1W^0_{b,c}\Big) {\rm d}s \,
{\gamma}_{\mathrm{ref}}^e(0)\end{array}\] Denoting by $\mathscr{V}$
any vertical flow, we have
\[\begin{array}{l} (t,\dot{\xi}_0^{\eps_1,\ldots,\eps_l}(t))
 = {\mathscr{V}}\,\circ \;
\overrightarrow{{\rm exp}}\int^t_0 \Big(X^e \\
+\sum_{a=1}^{N_0} \lambda_a^{\eps_1,\dots,\eps_{l-1}}(s)V^{0}_a
\\ +\sum_{b,c=1, \,
b<c}^{N_0} \lambda_{b,c}^{\eps_1,\dots,\eps_{l-1}}(s)W^0_{b,c}  \\
+ \int^s_0 \frac{1}{\eps_l}\sum_{a=1}^{N_0}u_{{\rm
osc},a}^{\eps_1,\dots,\eps_{l-1}}(s_0/\eps_l,s){\rm
d}s_0[V^{0}_a,X^e]\\
-\sum_{a=1}^{N_0} \int^s_0 \frac{1}{\eps_l}\partial_2 u_{{\rm
osc},a}^{\eps_1,\dots,\eps_{l-1}}(s_0/\eps_l,s){\rm d}s_0 V^{0}_a
\\+\sum_{b,c=1}^{N_0}(\Lambda_T(s,u_{{\rm
osc},b}^{\eps_1,\dots,\eps_{l-1}},u_{{\rm
osc},c}^{\eps_1,\dots,\eps_{l-1}})-\\
\int^s_0\frac{1}{\eps_l}u_{{\rm
osc},b}^{\eps_1,\dots,\eps_{l-1}}(s_1/\eps_l,s)
 \int^{s_1}_0\frac{1}{\eps_l}u_{{\rm
osc},c}^{\eps_1,\dots,\eps_{l-1}}(s_0/\eps_l,s)\\{\rm d}s_0{\rm
d}s_1)W^0_{b,c} \Big) {\rm d}s\,
{\gamma}_{\mathrm{ref}}^e(0).\end{array}\]
Noticing that
\[
\begin{array}{l}
\sum_{a=1}^{N_0} \lambda_a^{\eps_1,\dots,\eps_{l-1}}(s) V^{0}_a
+\sum_{b,c=1}^{N_0} (\lambda_{b,c}^{\eps_1,\dots,\eps_{l-1}}(s)
W^{0}_{b,c}\\ =\sum_{\hat{a}=1}^{N_1}\left(u_{{\rm
slow},{\hat{a}}}^{\eps_1,\dots,\eps_{l-2}}(s)
+\frac{1}{\eps_{l-1}}u_{{\rm
osc},{\hat{a}}}^{\eps_1,\dots,\eps_{l-2}}(s/\eps_{l-1},s)\right)V^{1}_{\hat{a}}
\end{array}
\]
and applying iteratively the same computation as above, one ends up with
\[\begin{array}{l} (t,\dot{\xi}_0^{\eps_1,\ldots,\eps_l}(t))=\\
 \mathscr{V}\,\circ  \overrightarrow{{\rm exp}}\int^t_0
\left(X^e+\lambda_a(s) V^{l}_a+  \mathscr{T}(s)\right){\rm
d}s\; \gamma_{\mathrm{ref}}^e(0),
\end{array}\]
where $\mathscr{T}(s)$ is a sum of terms of the form $\zeta(s)V$ where
$V\in \mathscr{X}(\R\times TQ)$ is independent of the $\eps_j$,
while $\zeta\in\mathcal{C}^\infty(I,\R)$ 
is of one
of the following four types:\[\begin{array}{l} \zeta_1=\int^{s}_0
\frac{1}{\eps_{i}}u_{{\rm
osc},a}^{\eps_1,\dots,\eps_{i-1}}(s_0/\eps_{i},s){\rm d}s_0,\\
\zeta_2=-\int^{s}_0 \frac{1}{\eps_{i}}\partial_2 u_{{\rm
osc},a}^{\eps_1,\dots,\eps_{i-1}}(s_0/\eps_{i},s){\rm
d}s_0, \\
\zeta_3=\frac 1{1+\delta_{bc}}\Big[2\Lambda_T(s,u_{{\rm
osc},b}^{\eps_1,\dots,\eps_{i-1}},u_{{\rm
osc},c}^{\eps_1,\dots,\eps_{i-1}})\\
-
\left(\int_0^s\frac{1}{\eps_{i}}u_{{\rm
osc},b}^{\eps_1,\dots,\eps_{i-1}}(s_0/\eps_{i},s){\rm d}s_0\right)
\\ \left(\int_0^s\frac{1}{\eps_{i}}u_{{\rm
osc},c}^{\eps_1,\dots,\eps_{i-1}}(s_0/\eps_{i},s){\rm
d}s_0\right)\Big] ,\end{array}\] for $i=1,\ldots, l$, and
\[\begin{array}{l}
\zeta_4= -\left(\int_0^s\frac{1}{\eps_{j}}u_{{\rm
osc},b}^{\eps_1,\dots,\eps_{j-1}}(s_0/\eps_{j},s){\rm d}s_0\right)
\\
\left(\int_0^s\frac{1}{\eps_{i}}u_{{\rm
osc},c}^{\eps_1,\dots,\eps_{i-1}}(s_0/\eps_{i},s){\rm d}s_0\right),
\end{array}\]
 for $i>j$, $i,j\in\{1,\ldots, l\}$.

We are left to prove that every $\int_0^t\zeta(s){\rm d}s$ converges to zero uniformly
with respect to $t$ as $\eps$ goes to zero (Lemma~\ref{ConvIntegral}).

Applying Lemma~\ref{simple_fact} with
 $$f(\tau,s)=\int^{\tau}_0
u_{{\rm
osc},a}^{\eps_1,\dots,\eps_{i-1}}(s_0
,s){\rm
d}s_0$$
and $\hat\eps=\eps_i$
 leads to
 $$\int_0^t \zeta_1(s) {\rm
d}s\leq C \eps_i (\eps_1\cdots \eps_{i-2}\eps_{i-1}^2)^{-1}.$$
Similarly,
\[
\int_0^t \zeta_2(s) {\rm d}s \leq C  (\eps_1\cdots
\eps_{i-2}\eps_{i-1}^3)^{-1}\eps_i.\] Taking
\[\begin{array}{l}f(\tau,s)= \left(\int_0^\tau u_{{\rm
osc},b}^{\eps_1,\dots,\eps_{i-1}}(s_0,s){\rm d}s_0\right)
\\ \quad \left(\int_0^\tau u_{{\rm
osc},c}^{\eps_1,\dots,\eps_{i-1}}(s_0,s){\rm
d}s_0\right),\end{array}\] we obtain
\[ \int_0^t \zeta_3(s) {\rm
d}s \leq C  (\eps_1\cdots \eps_{i-2})^{-2}\eps_{i-1}^{-3}\eps_i.\]
Finally, with
\[\begin{array}{l}f(\tau,s)=\left(\int_0^{s/\eps_j}u_{{\rm
osc},b}^{\eps_1,\dots,\eps_{j-1}}(s_0,s){\rm d}s_0\right) \\ \quad
\left(\int_0^\tau u_{{\rm
osc},c}^{\eps_1,\dots,\eps_{i-1}}(s_0,s){\rm d}s_0\right),
\end{array}
\]
we have \[\int_0^t \zeta_4(s) {\rm d}s \leq C  (\eps_1\cdots
\eps_{j-2}\eps_{j-1})^{-1}(\eps_1\cdots
\eps_{i-2}\eps_{i-1}^2)^{-1}\eps_i.
\]
Hence, each $\zeta$ satisfies
$$
 \int_0^t\zeta(s) {\rm
d}s\leq  \frac{C\eps_i}{(\eps_1\cdots \eps_{i-2})^{2}\eps_{i-1}^{3}}
= \frac{C\eps_i}{\eps_{i-1}^{4}}\frac{\eps_{i-1}}{(\eps_1\cdots
\eps_{i-2})^{2}}
$$
and it is easy to prove by recurrence that $\eps_{i-1}(\eps_1\cdots
\eps_{i-2})^{-2}$ tends to zero as $\eps$ goes to zero. \end{proof}

\begin{remark}\label{H_quantification-Z}
As in Remark \ref{H_quantification}, the hypothesis
$\limsup_{\eps\rightarrow0}\eta_i(\eps)/\eps^4<\infty$ in the
statement of Theorem~\ref{thm-1-parameter-Z} can be weakened by
requiring that $\limsup_{\eps\rightarrow
0}\eta_i(\eps)/\eps^{3+a}<\infty$ with $a=\sqrt{3}-1\simeq 0.73$.
Indeed, with this choice of $a$,
\[
\frac{\eps_i^a }{\eps_1^2\eps_2^2 \cdots \eps_{i-1}^2}=\left(
\frac{\eps_i}{\eps_{i-1}^{3+a}} \right)^a \left(
\frac{\eps_{i-1}^a}{\eps_1^2 \cdots \eps_{i-2}^2}\right),\] so that
\[\begin{array}{l}
\frac{\eps_i }{\eps_1^2\cdots \eps_{i-2}^2\eps_{i-1}^{3}}=\\
\frac{\eps_i}{\eps_{i-1}^{3+a}}\left(
\frac{\eps_{i-1}}{\eps_{i-2}^{3+a}} \right)^a \left(
\frac{\eps_{i-2}}{\eps_{i-3}^{3+a}} \right)^a\cdots \left(
\frac{\eps_2}{\eps_{1}^{3+a}} \right)^a\eps_1^a.
\end{array}\]
Hence, each $ \int_0^t\zeta(s) {\rm d}s$ goes to zero uniformly with
respect to $t\in I$ as $\eps$ tends to zero.
\end{remark}

Under special assumptions the relations in
(\ref{to-be-quantified}) that have been quantified in Theorem
\ref{thm-1-parameter-Z} can be reduced up to $\lim_{\eps \to
0}\eta_i(\eps)/\eps^2=0$ as stated in the following corollary and
illustrated in the numerical simulation included in Section
\ref{SSimulation}. The required assumptions are stated in terms of the
number of steps in the algorithm and of the coefficients providing the parametrization of the reference trajectory.

\begin{corol}\label{Corol-reduction-to-eps2}
Let $\Sigma=(Q,\nabla,Y,\mathscr{Y},\mathbb{R}^k)$ be a FACCS. Let
$\mathscr{Z}_i$, $i\in \mathbb{N}_0$, be defined as in (\ref{setZ})
and assume that ${\rm span}_{\mathbb{R}}\mathscr{Z}_2(q)=T_qQ$ for
all $q\in Q$ and that for each $i\in \{0,1\}$ and each $Z\in
\mathscr{Z}_i$, $\langle Z\colon Z\rangle (q)\in {\rm
span}_{\mathbb{R}}(\mathscr{Z}_i(q))$.
 Fix
a reference trajectory $\gref\in \mathcal{C}^\infty(I,Q)$ such that
the coefficients $\lambda_a$ associated with $\gref$
as
in (\ref{Eq-First-Param-Case Z}) are constant
functions.
Construct $\xi^{\eps_1,\eps_2}$, $u^{\eps_1,\eps_2}$ and
$\hat\eta_i$ as above. If $\eta_2:(0,+\infty)\rightarrow(0,+\infty)$
satisfies $\lim_{\eps\rightarrow 0}\eta_2(\eps)/\eps^2=0$, then
$\eps\mapsto
u^{\eps,\eta_2(\eps)}$ is a
one\textendash{}parameter tracking control law for $\gref$.
\end{corol}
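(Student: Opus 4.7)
The strategy is to follow the chronological expansion performed in the proof of Theorem~\ref{thm-1-parameter-Z} in the special case $l=2$, while exploiting the additional structure afforded by the hypotheses in order to sharpen each residual estimate by a factor of $\eps_1$.

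First, unfold the construction. Because $\lambda_a$ is constant and, by the strengthened hypothesis, the decomposition $\langle Z\colon Z\rangle=\sum\sigma_{ab}Z_b$ holds with $\sigma_{ab}\in\mathbb{R}$ for $Z$ ranging over $\mathscr{Z}_0\cup\mathscr{Z}_1$, the functions $u^{\varnothing}_{\mathrm{slow},a}$ produced by the first application of Theorem~\ref{ACCStrack} are constants, and $u^{\varnothing}_{\mathrm{osc},a}(\tau,t)$ depends only on $\tau$. Consequently, regrouping
\[\nabla_{\dot\xi_1^{\eps_1}(t)}\dot\xi_1^{\eps_1}(t)-Y(t,\dot\xi_1^{\eps_1}(t))=\sum_a\alpha_a^{\eps_1}(t)Y_a+\sum_{b<c}\beta_{bc}^{\eps_1}(t)\langle Y_b\colon Y_c\rangle,\]
one obtains $\alpha_a^{\eps_1}(t)=\bar\alpha_a+(1/\eps_1)\hat\alpha_a(t/\eps_1)$ and $\beta_{bc}^{\eps_1}(t)=\bar\beta_{bc}+(1/\eps_1)\hat\beta_{bc}(t/\eps_1)$, with $\bar\alpha_a,\bar\beta_{bc}$ constants and $\hat\alpha_a,\hat\beta_{bc}$ fixed $T$-periodic zero-mean trigonometric polynomials. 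Applying Theorem~\ref{ACCStrack} at the second level then produces $u^{\eps_1}_{\mathrm{osc},a}(\tau,t)=u^{(0)}_{\mathrm{osc},a}(\tau)+(1/\eps_1)u^{(1)}_{\mathrm{osc},a}(\tau,t/\eps_1)$, so that $\partial_2 u^{\eps_1}_{\mathrm{osc},a}(\tau,t)=(1/\eps_1^2)\,\partial_2 u^{(1)}_{\mathrm{osc},a}(\tau,t/\eps_1)$ has no genuine slow contribution at all.

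Running the chronological expansion of the proof of Theorem~\ref{thm-1-parameter-Z} with $l=2$, every correction term in $\mathscr{T}(s)$ can be written, using the above factorizations and the zero-mean cancellations already identified in that proof, in the form $(1/\eps_1^{p})G(s/\eps_2,s/\eps_1)$ with $p\in\{0,1,2\}$ and $G$ smooth, bounded and $T$-periodic with zero mean in its first variable. The key improved estimate is the two-scale averaging bound
\[\int_0^t G(s/\eps_2,s/\eps_1)\,{\rm d}s=O(\eps_2)\]
uniformly in $t\in I$ as $\eps_2/\eps_1\to 0$, which is proved by expanding $G$ in a finite Fourier series in both variables and bounding each oscillatory integral $\int_0^t e^{2\pi i(js/(T\eps_2)+ks/(T\eps_1))}\,{\rm d}s$ by integration by parts: because $G$ has zero mean in its first argument, only $j\neq 0$ appears, and for $\eps_2/\eps_1\to 0$ the denominator $|j/\eps_2+k/\eps_1|$ is of order $1/\eps_2$, so that no resonance occurs between the two scales. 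The trigonometric form of the functions $\varphi_j$ prescribed by (\ref{varphi}) is essential here. A direct application of Lemma~\ref{simple_fact} would yield only the weaker bound $O(\eps_2/\eps_1)$.

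Combining the prefactor $1/\eps_1^{p}$, $p\leq 2$, with the sharpened averaging estimate, each $\int_0^t\zeta(s)\,{\rm d}s=O(\eps_2/\eps_1^{2})$. Setting $\eps_1=\eps$ and $\eps_2=\eta_2(\eps)$ with $\eta_2(\eps)/\eps^2\to 0$, every correction term tends to zero uniformly in $t\in I$, and Lemma~\ref{ConvIntegral} yields ${\rm d}(\xi^{\eps,\eta_2(\eps)}(t),\gref(t))\to 0$ uniformly on $I$, which is the required one-parameter tracking property. The main technical hurdle is the bookkeeping needed to verify that every term produced by the iterated variation formula admits the claimed two-scale factorization with zero mean in $s/\eps_2$; this hinges on the constancy of both $\bar\alpha_a,\bar\beta_{bc}$ and of the $\sigma$-coefficients, which is precisely what the strengthened hypothesis ensures.
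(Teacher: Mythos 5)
Your proof follows the same route as the paper: both exploit the constancy of $\lambda_a$ (and of the coefficients in the symmetric\textendash{}product decompositions of $\langle Z\colon Z\rangle$) to conclude that $u_{\mathrm{osc},a}$ is a finite combination of trigonometric oscillations depending only on $\tau$ and that $u_{\mathrm{osc},a}^{\eps_1}$ lies in $\mathrm{span}_{\mathbb{R}}\{\varphi_{j_0}(\tau),\tfrac{1}{\eps_1}\varphi_{j_1}(t/\eps_1)\varphi_{j_2}(\tau)\}$, so that every residual $\zeta$ arising in $\mathscr{T}$ is a product of trigonometric functions at the two fast scales. The paper's proof stops at this structural observation and simply asserts convergence under $\eta_2(\eps)/\eps^2\to 0$; you supply the missing quantitative step---the two\textendash{}scale non\textendash{}resonance estimate $\int_0^t G(s/\eps_2,s/\eps_1)\,{\rm d}s=O(\eps_2)$ via finite Fourier expansion of $G$ and integration by parts, using that $|j/\eps_2+k/\eps_1|\gtrsim 1/\eps_2$ when $j\neq 0$ and $\eps_2/\eps_1\to 0$---which is precisely what upgrades the generic $O(\eps_2/\eps_1^{3})$ bound of Theorem~\ref{thm-1-parameter-Z} to $O(\eps_2/\eps_1^{2})$.
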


\begin{proof} The hypotheses of the corollary
and the expressions of the controls appearing in Theorem~\ref{ACCStrack}
guarantee that the
oscillatory controls are as follows:
\begin{align*} u_{{\rm osc},a}(\tau,t)&\in  {\rm span}_{\mathbb{R}}\{\varphi_{j_0}(\tau)\}, \\
u_{{\rm osc},a}^{\eps_1}(\tau,t) & \in {\rm
span}_{\mathbb{R}}\{\varphi_{j_0}(\tau),
\frac{1}{\eps_1}\varphi_{j_1}(t/\eps_1)\varphi_{j_2}(\tau)\},
\end{align*}
where $j_0,j_1,j_2$ vary in $\N$. 

Thus the integrals of all the terms $\mathscr{T}$ that appear in the
proof of Theorem~\ref{thm-1-parameter-Z}
only contain product of
trigonometric functions, more specifically cosines and sines
and
converge to zero
uniformly with respect to $t$ as $\eps$ goes to zero
if $\eta_2$
is
such that $\lim_{\eps\rightarrow
0}\eta_2(\eps)/\eps^2=0$.
 \end{proof}

\subsubsection{Numerical simulation: submarine}\label{SSimulation}

In this section, we illustrate the method to obtain a
one\textendash{}parameter control law in a concrete situation that
fulfills the assumptions in Corollary \ref{Corol-reduction-to-eps2}.
The algorithm described in Section \ref{SCaseZ} has been implemented
with Scilab.

For our example, we have the submarine presented in Section
\ref{SExSubmarine} whose dynamics are given by (\ref{eq-below}) and
(\ref{eq-KirchSubm}). We consider the same kind of inertia matrix as
in Section \ref{SExSubmarine}, taking
\[J_1=1, \quad J_3=3, \quad M_1=1,\quad M_2=2, \quad M_3=3.\]
We recall that in the case under consideration
Theorem \ref{ACCStrack} cannot be applied. However,
our method provides a one\textendash{}parameter control law that
solves the tracking problem.
The trajectory to be tracked is given by
\[r(t)=(-t,-t,-t), \quad A_{33}(t)=1, \quad t\in [0,1]\]
with initial condition $r(0)=(0,0,0)$, $A(0)={\rm Id}$, being ${\rm
Id}$ the identity $3\times 3$ matrix, $\Pi(0)=(0,0,0)$,
$P(0)=(-1,-2,-3)$. Thus there are degrees of freedom in the attitude
of the submarine, but the target position of the center of the
submarine is fully determined.

\begin{figure}[thpb]
      \begin{center}
\includegraphics[width=\columnwidth]{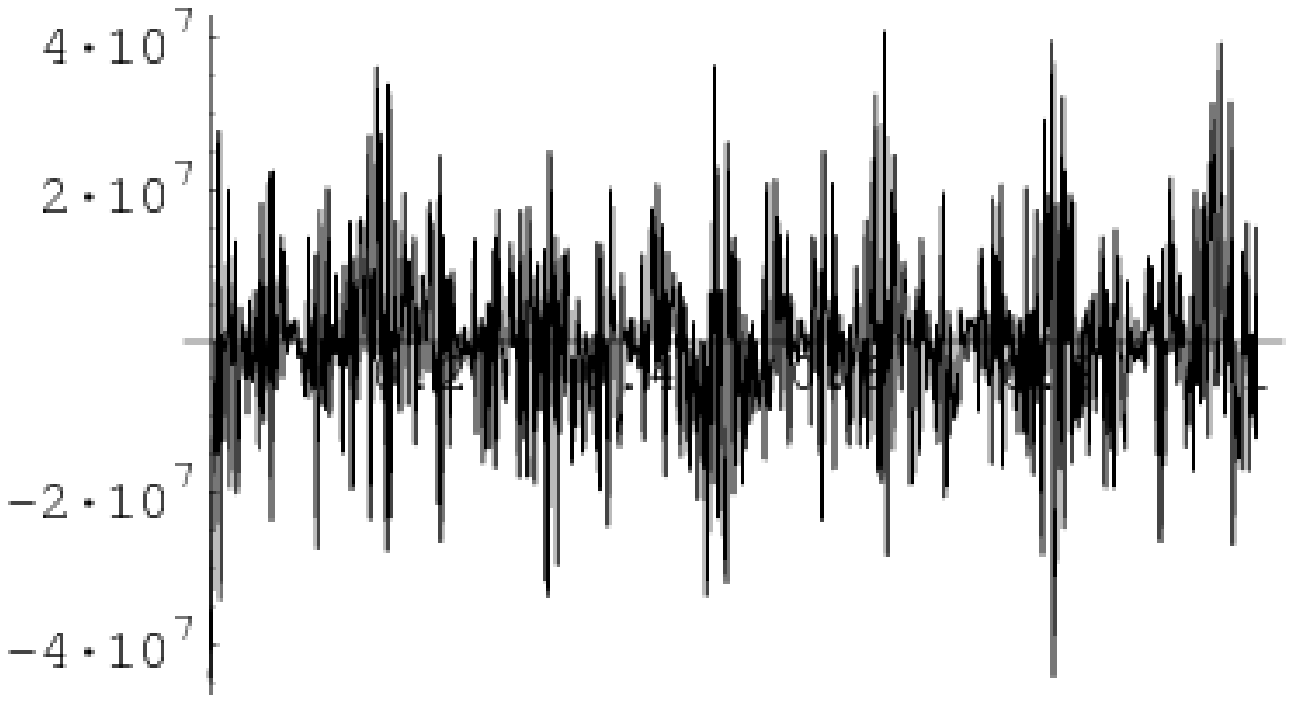}
\caption{The 
control law $u_1^{\eps,\eta_2(\eps)}$.
}\label{Fig-Control1}
\end{center}
\end{figure}

\begin{figure}[thpb]
      \centering
\includegraphics[width=\columnwidth]{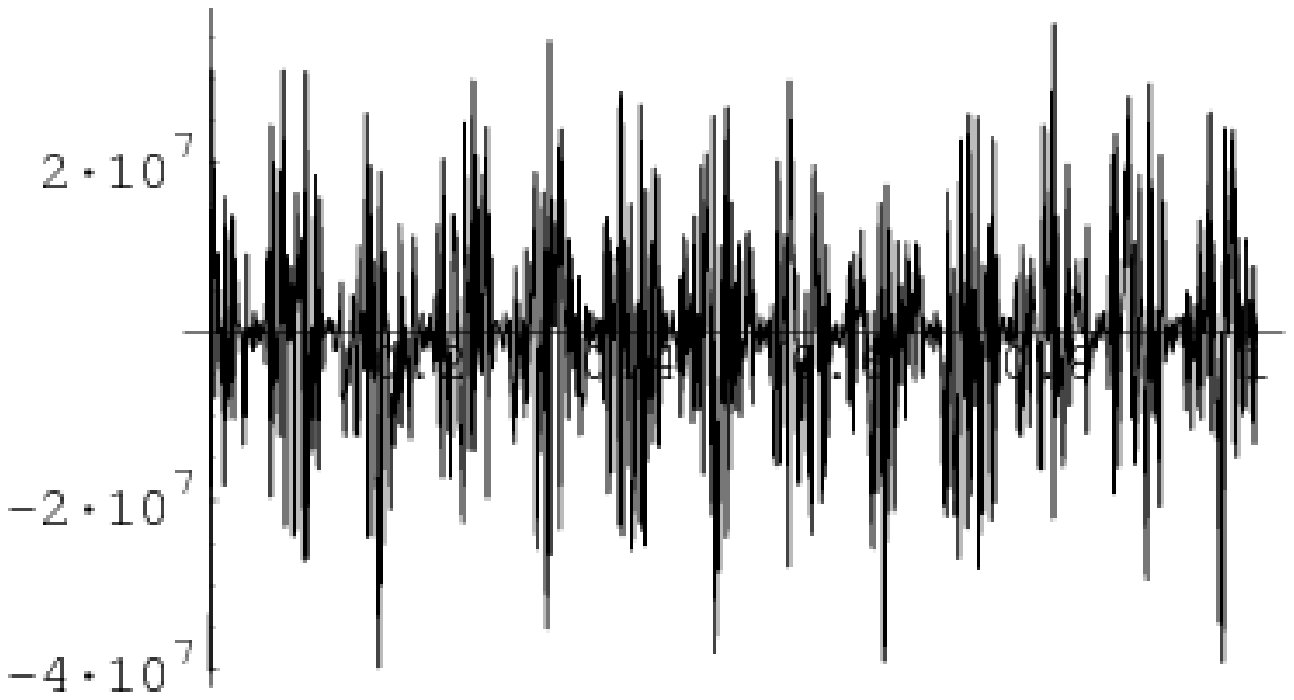}
\caption{The 
control law $u_2^{\eps,\eta_2(\eps)}$.
}\label{Fig-Control2}
\end{figure}
\begin{figure}[thpb]
      \centering
\includegraphics[width=\columnwidth]{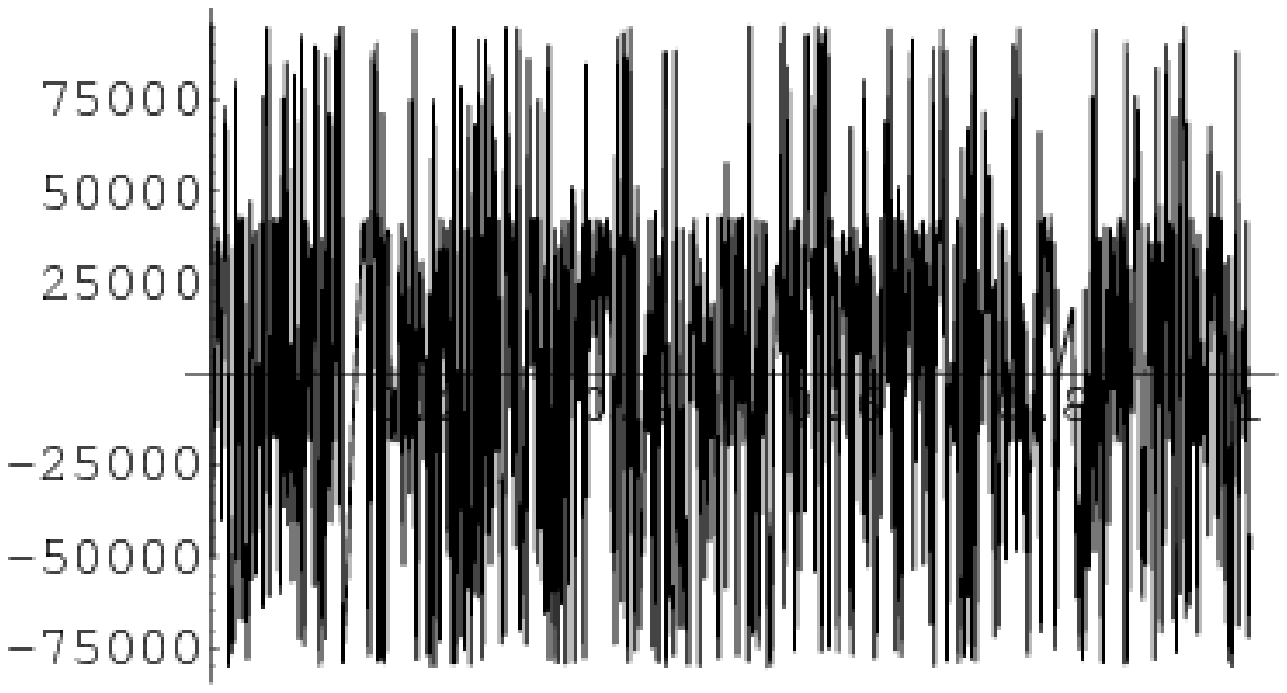}
\caption{The 
control law $u_3^{\eps,\eta_2(\eps)}$.
}\label{Fig-Control3}
\end{figure}

In this implementation we take $\eta_2(\eps)=\eps^{2.5}$ because the
considered reference trajectory satisfies the hypotheses of
Corollary~\ref{Corol-reduction-to-eps2}.

First, we compute the one\textendash{}parameter control laws
$u_1^{\eps_1,\eps_2}$, $u_2^{\eps_1,\eps_2}$ and
$u_3^{\eps_1,\eps_2}$ as described in the proof of
Theorem~\ref{thm-1-parameter-Z}. Then we fix $\eps=1/39\approx
0.0256$, so that $\eta_2(\eps)\approx 0.0001$. The corresponding
control laws are represented in Figures \ref{Fig-Control1},
\ref{Fig-Control2} and \ref{Fig-Control3}. By construction, the
controls are highly oscillatory.

\begin{figure}[thpb]
      \centering
\includegraphics[width=\columnwidth]{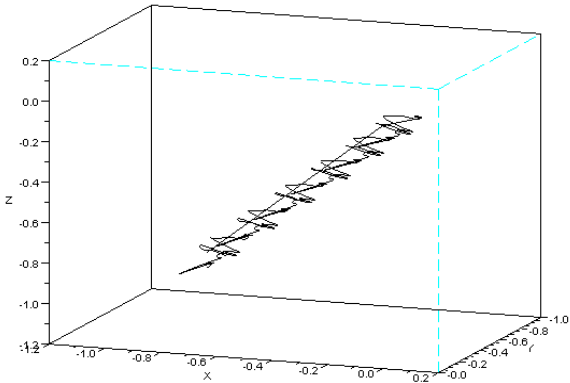}
\caption{Evolution of the position of the center of the submarine
with respect to time. The target trajectory is the
non\textendash{}oscillating curve.}\label{Fig-track-r}
\end{figure}
\begin{figure}[thpb]
      \centering
\includegraphics[width=\columnwidth]{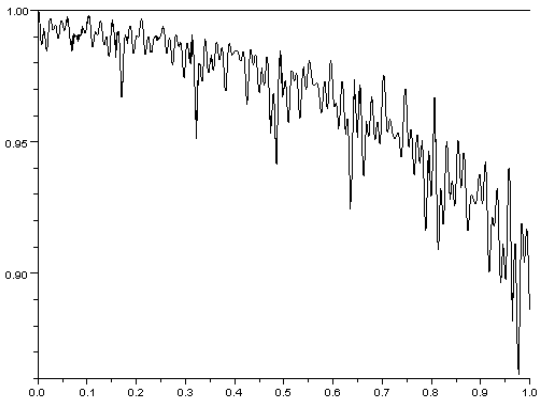}
\caption{Evolution of $A_{33}$ entry of the attitude matrix with
respect to time. The target trajectory is the
non\textendash{}oscillating curve.}\label{Fig-track-A33}
\end{figure}

Then we integrate the dynamics of the system using the numerical
integrator \textit{stiff} included in Scilab. As a result, Figures
\ref{Fig-track-r} and \ref{Fig-track-A33} show that the target
trajectory, corresponding with the non\textendash{}oscillating line,
is tracked by the oscillating curve. The error of the approximation,
computed by the supremum distance, is ${\rm d}((r_{{\rm
ref}}(t),A_{33,{\rm ref}}(t)),(r(t),A_{33}(t)))\approx 0.1903$.

\section{Conclusions}
The previously  known sufficient conditions for tracking were given
in terms of finite sets of vector fields, as reviewed in Section
\ref{STrack}. Here we have constructed a sequence of infinite family
of vector fields that defines a sequence of convex cones suitable
for characterizing trackability (Theorem \ref{GeneralTrack} and
Corollary \ref{CorolH}).
Different convex cones, (\ref{setK}) and (\ref{setH}), have been
considered. Under additional assumptions, using the cones in
(\ref{setH}) and a particular sequence of finite families of vector
fields, it is possible to recover the
sufficient conditions for
tracking already known in the literature \cite{2005BulloAndrewBook},
see Corollary \ref{CorolSym1}. However, our constructions not only
recover the previously known results,
but they also extend
them,
as shown in Section \ref{exps}.

The sequence of families of vector fields in Corollaries
\ref{CorolH} and \ref{CorolSym1} are also suitable for constructing
a one\textendash{}parameter tracking control law (Theorems
\ref{thm-1-parameter} and \ref{thm-1-parameter-Z}). It
remains as future work to generalize the construction of
one\textendash{}parameter tracking control laws
when the sets (\ref{setK}),
that include the closure, are considered.

Another future research line is the study of the complexity for the
control\textendash{}affine systems considered in this work.
Apart from tracking
non\textendash{}admissible trajectories, one could impose more requirements
on the solution to the
tracking problem, as for instance, to save energy.
The complexity
provides a good tool to formulate
this kind of problems and, so far, has been only
studied for control\textendash{}linear systems \cite{Gauthier,Jean}.

\section*{Acknowledgements}

The first author acknowledges the financial support of Comissionat
per a Universitats i Recerca del Departament d'Innovaci\'o,
Universitats i Empresa of Generalitat de Catalunya in the final
preparation of this paper.


\end{document}